\DeclareMathOperator{\divv}{div}
\DeclareMathOperator{\curl}{curl}
\DeclareMathOperator{\loc}{loc}
\DeclareMathOperator{\divf}
{di\overset{\raisebox{0.1ex}{\hspace{0.1em}$\mathbf{\cdot}$}}{v}}
\begin{document}
\title{Global weak solutions and incompressible limit of two-dimensional isentropic compressible magnetohydrodynamic equations with ripped density and large initial data
\thanks{
Wu's research was partially supported by Fujian Alliance of Mathematics (No. 2023SXLMMS08) and the Scientific Research Funds of Xiamen University of Technology (No. YKJ25009R).
Zhong's research was partially supported by Fundamental Research Funds for the Central Universities (No. SWU--KU24001) and National Natural Science Foundation of China (No. 12371227).}
}

\author{Shuai Wang$\,^{\rm 1}\,$,\ Guochun Wu$\,^{\rm 2}\,$,\
Xin Zhong$\,^{\rm 1}\,$ {\thanks{E-mail addresses: swang238@163.com (S. Wang),
guochunwu@126.com (G. Wu), xzhong1014@amss.ac.cn (X. Zhong).}}\date{}\\
\footnotesize $^{\rm 1}\,$
School of Mathematics and Statistics, Southwest University, Chongqing 400715, P. R. China\\
\footnotesize $^{\rm 2}\,$ School of Mathematics and Statistics, Xiamen University of Technology, Xiamen 361024, P. R. China}

\maketitle
\newtheorem{theorem}{Theorem}[section]
\newtheorem{definition}{Definition}[section]
\newtheorem{lemma}{Lemma}[section]
\newtheorem{proposition}{Proposition}[section]
\newtheorem{corollary}{Corollary}[section]
\newtheorem{remark}{Remark}[section]
\renewcommand{\theequation}{\thesection.\arabic{equation}}
\catcode`@=11 \@addtoreset{equation}{section} \catcode`@=12
\maketitle{}

\begin{abstract}
We establish the global existence of weak solutions of the isentropic compressible magnetohydrodynamic equations with ripped density in the whole plane provided the bulk viscosity coefficient is properly large. Moreover, we show that such solutions converge globally in time to a weak solution of the inhomogeneous incompressible magnetohydrodynamic equations when the bulk viscosity coefficient tends to infinity. In particular, {\it the initial energy can be arbitrarily large and vacuum states are allowed in interior regions}. Our analysis depends on the effective viscous flux and a Desjardins-type logarithmic interpolation inequality as well as structure of the
system under consideration. To the best of our knowledge,
this paper provides the first incompressible limit of the isentropic compressible magnetohydrodynamic equations for the large bulk viscosity.
\end{abstract}

\textit{Key words and phrases}. Magnetohydrodynamic equations; global weak solutions; incompressible limit; large initial data; vacuum.

2020 \textit{Mathematics Subject Classification}. 76W05; 76N10; 35B40.


\tableofcontents

\section{Introduction}
\subsection{Background and motivation}
The motion of a conducting fluid under the effect of the electromagnetic
field in two dimensions can be described by the following isentropic compressible magnetohydrodynamic (MHD) equations
\begin{align}\label{a1}
\begin{cases}
\rho_t+\divv(\rho\mathbf{u})=0,\\
(\rho\mathbf{u})_t+\divv(\rho\mathbf{u}\otimes\mathbf{u})+\nabla P=
\mu\Delta\mathbf{u}+(\mu+\lambda)\nabla\divv\mathbf{u}+\mathbf{B}\cdot\nabla\mathbf{B}-\frac12\nabla|\mathbf{B}|^2,\\
\mathbf{B}_t+\mathbf{u}\cdot\nabla\mathbf{B}-\mathbf{B}\cdot\nabla\mathbf{u}+\mathbf{B}\divv\mathbf{u}=\nu\Delta\mathbf{B},\\
\divv\mathbf{B}=0,
\end{cases}
\end{align}
where the unknowns $\rho$, $\mathbf{u}=(u^1,u^2)$, $\mathbf{B}=(B^1,B^2)$, and $P=a\rho^\gamma\ (a>0,\gamma>1)$ stand for the density, velocity, magnetic field, and pressure, respectively. The constants $\mu$ and $\lambda$ represent shear viscosity and bulk viscosity, respectively, satisfying the physical restrictions
\begin{equation*}
\mu>0,\ \ \ \mu+\lambda\geq0,
\end{equation*}
while $\nu>0$ is the resistivity coefficient. The system \eqref{a1} couples the isentropic compressible Navier--Stokes equations of fluid dynamics with Maxwell's equations of electromagnetism.

We consider the Cauchy problem of \eqref{a1} with the given initial data
\begin{equation}\label{a2}
(\rho,\mathbf{u},\mathbf{B})|_{t=0}=(\rho_0,\mathbf{u}_0,\mathbf{B}_0)(\mathbf{x}),\ \ \ \mathbf{x}\in\mathbb{R}^2,
\end{equation}
and the far-field behavior
\begin{equation}\label{a3}
(\rho_0,\mathbf{u}_0,\mathbf{B}_0)(\mathbf{x})\rightarrow(\tilde{\rho},\mathbf{0},\mathbf{0}),\ \ \ \text{as}\ |\mathbf{x}|\rightarrow\infty,
\end{equation}
where the constant $\tilde{\rho}>0$ is the fixed reference density.

For $t\geq0$, observe that solutions of \eqref{a1} obey the global energy law
\begin{align*}
\int_{\mathbb{R}^2}\Big(\frac{1}{2}\rho |\mathbf{u}|^2+\frac{1}{2} |\mathbf{B}|^2+G(\rho)\Big)\mathrm{d}\mathbf{x}+\int_0^t\int_{\mathbb{R}^2}\left[\mu|\nabla \mathbf{u}|^2+(\mu+\lambda)(\divv \mathbf{u})^2+\nu|\nabla \mathbf{B}|^2\right]\mathrm{d}\mathbf{x}\mathrm{d}\tau
\leq C_0,
\end{align*}
where the initial total energy is given by
\begin{equation*}
C_0\triangleq\int_{\mathbb{R}^2}
\Big(\frac{1}{2}\rho_0|\mathbf{u}_0|^2+\frac{1}{2}|\mathbf{B}_0|^2+G(\rho_0)\Big)\mathrm{d}\mathbf{x},
\end{equation*}
with the potential energy $G(\rho)$ being defined as
\begin{equation}\label{1.4}
G(\rho)\triangleq\rho\int_{\tilde{\rho}}^\rho\frac{P(v)-P(\tilde{\rho})}{v^2}
\mathrm{d}v.
\end{equation}
Hence, the incompressibility can be recovered formally from the global energy law as $\lambda\rightarrow\infty$.

Magnetohydrodynamics describes the motion of electrically conductive fluids such as plasmas under the influence of electromagnetic fields. It has been found that the MHD equations play a fundamental role in astrophysics, geophysics, plasma physics, and so on (see, e.g., \cite{DA17}). However, the mathematical analysis of the governing system of MHD is exceptionally challenging due to the complex coupling between fluid and magnetic effects. Nevertheless, the pursuit of physical prediction requires a solid mathematical foundation, where a fundamental issue is to establish the global well-posedness of the governing equations---that is, to prove the existence, uniqueness, and continuous dependence on the initial data for all time. The system \eqref{a1} rests on the fundamental constitutive laws, and we refer the reader to \cite[Chapter 3]{LT12} for a rigorous derivation.

For the past four decades, significant progress has been made on the global well-posedness of the solutions for isentropic compressible MHD equations \eqref{a1}. Kawashima \cite{K84} proved the global existence of classical solutions for the 2D Cauchy problem provided the initial data are close to the constant state. Later on, based on the classical framework of Matsumura and Nishida \cite{MN80,MN83} for compressible viscous flows, Li--Yu \cite{LY11} and Zhang--Zhao \cite{ZZ10} independently established the existence and uniqueness of the global classical solutions for initial data that are small perturbations in $H^3(\mathbb{R}^3)$ surrounding a non-vacuum equilibrium state. Beyond this setting, as an equally mathematically challenging yet physically significant case, the possible presence of vacuum presents difficulties within the small-perturbation framework where the governing equations become singular and degenerate near the vacuum region. In this regards, it has been shown in \cite{LXZ13,LSX16} that the classical solutions to the 3D and 2D Cauchy problem exist globally, with initial data having small energy but possibly large oscillations and vacuum states, respectively.

From the viewpoint of partial differential equations, it is certainly interesting to weaken or remove the small-energy assumption. The result in \cite{HHPZ17} reveals that this relaxation is possible, as the authors proved the global well-posedness of classical solutions for \eqref{a1} in $\mathbb{R}^3$ provided $[(\gamma-1)^{1/9}+\nu^{-1/4}]C_0$ is suitably small. Moreover, with the aid of weak convergence method developed by P.-L. Lions \cite{PL98} and Feireisl \cite{F04}, Hu and Wang \cite{HW10} demonstrated the global existence and large-time behavior of finite-energy large weak solutions in 3D bounded domains with Dirichlet boundary conditions, when the adiabatic exponent $\gamma>\frac32$. These weak solutions provide valuable insights into the system's behavior in extreme regimes although they lack uniqueness and perfect regularities.

The pursuit of a well-posedness theory, particularly the issue of uniqueness, naturally leads to the search for weak solutions with enhanced regularity.
Beyond the well-known {\it finite-energy weak solutions} \cite{HW10} and {\it small-energy classical solutions} \cite{LY11,LXZ13,ZZ10,LSX16}, a third important category is that of the so-called {\it Hoff's intermediate weak solutions} \cite{Hoff95,Hoff95*}.
These solutions possess stronger regularity than finite-energy weak solutions in the sense that particle paths can be defined in non-vacuum regions, yet weaker than standard strong solutions as they may have discontinuous density along some curves (in 2D) or surfaces (in 3D) (see \cite{Hoff02}). Suen and Hoff \cite{SH12} proved the global existence of such solutions to \eqref{a1} for initial data that are small in $L^2(\mathbb{R}^3)$ and have positive essentially bounded initial density. Such a momentous result was later generalized in \cite{LYZ13} to allow for initial vacuum.
This improved regularity subsequently enabled the establishment of uniqueness and continuous dependence for these weak solutions in \cite{S20}, thereby strengthening the existence theory in \cite{SH12}. Outside of these small-data settings, an aspect worth examining is whether Hoff's intermediate weak solutions can be constructed for large initial data with vacuum.

Recently, when the bulk viscosity is sufficiently large, Danchin and Mucha \cite{DM17} obtained global regular solutions to the isentropic compressible Navier--Stokes equations (i.e., \eqref{a1} with $\mathbf{B}\equiv\mathbf{0}$) in $\mathbb{R}^2$ with initial density bounded away from vacuum and arbitrarily large initial velocity. As a by-product, they derived the incompressible limit as $\lambda\rightarrow\infty$ with a convergence rate of order $(2\mu+\lambda)^{-1/2}$. Subsequently, Danchin and Mucha \cite{DM23} generalized the result in \cite{DM17} to the torus $\mathbb{T}^2$,
allowing the presence of vacuum but assuming zero initial total momentum. One may ask: can the initial non-vacuum condition in \cite{DM17} and the zero initial momentum assumption in \cite{DM23} be removed? Furthermore, one would like to wonder whether it is possible to establish analogous results to the two-dimensional isentropic compressible MHD system \eqref{a1}, investigating what role the magnetic field plays.

The primary objective of the current paper is to provide an affirmative answer to these issues. More precisely, we shall address the Cauchy problem \eqref{a1}--\eqref{a3} supplemented with general arbitrary large initial data with merely nonnegative bounded density: we recall that a ``ripped'' initial density is a function that may have nontrivial regions of vacuum, without any extra regularity assumption (see \cite{DM23}).
We will prove global weak solutions to the Cauchy problem \eqref{a1}--\eqref{a3} with large initial data as long as the bulk viscosity coefficient is properly large and investigate the limiting behavior of such solutions as $\lambda\rightarrow\infty$. One expects to obtain in the limit a global solution to the inhomogeneous incompressible magnetohydrodynamic equations
\begin{align}\label{1.5}
\begin{cases}
\varrho_t+{\bf v}\cdot\nabla\varrho=0,\\
\varrho{\bf v}_t+\varrho{\bf v}\cdot\nabla {\bf v}+\nabla \Pi=\mu\Delta {\bf v}+{\bf b}\cdot\nabla{\bf b}-\frac12\nabla|{\bf b}|^2,\\
{\bf b}_t+\mathbf{v}\cdot\nabla{\bf b}={\bf b}\cdot\nabla\mathbf{v}+\nu\Delta{\bf b},\\
\divv{\bf v}=\divv{\bf b}=0,\\
(\varrho,{\bf v},{\bf b})|_{t=0}=(\rho_0,{\bf v}_0,{\bf B}_0)
\end{cases}
\end{align}
for $({\bf x},t)\in\mathbb{R}^{2}\times(0,+\infty)$, where ${\bf v}_0$ is the Leray-Helmholtz projection of ${\bf u}_0$ on divergence-free vector fields.

\subsection{Main results}
Before presenting our main results, we first formulate the notations and conventions adopted in this paper. We denote by $C$ a generic positive constant which may vary at different places. The symbol $\Box$ marks the end of a proof, $A:B$ represents the trace of the matrix product $AB^\top$, and $c\triangleq d$ means $c=d$ by definition. For notational simplicity, we write
\begin{align*}
\int f \mathrm{d}\mathbf{x}=\int_{\mathbb{R}^2} f \mathrm{d}\mathbf{x}, \ \ f_i=\partial_if\triangleq\frac{\partial f}{\partial x_i}.
\end{align*}
For $1\le p\le \infty$ and integer $k\ge 0$, we denote the standard Sobolev spaces
\begin{align*}
L^p=L^p(\mathbb{R}^2),\ \ W^{k, p}=W^{k, p}(\mathbb{R}^2),\ \ H^k=W^{k, 2},\ \
D^{1,p}=\{f\in L_{\loc}^1(\mathbb{R}^2):\|\nabla f\|_{L^p}<\infty\}.
\end{align*}
Furthermore, for any $f\in L^1_{\loc}(\mathbb R^2)$, we define its mollification by $[f]_\epsilon\triangleq j_\epsilon*f$, where $j_\epsilon=j_\epsilon({\bf x})$ is the stand mollifier with width $\epsilon$. Whenever $\alpha\in (0,1]$,
the H\" older seminorm of a function ${\bf v}:\mathbb R^2\rightarrow \mathbb R^2$ is defined by
\begin{align*}
\langle {\bf v}\rangle^\alpha
=\sup\limits_{\substack{{\bf x}, {\bf y}\in\mathbb R^2\\ {\bf x}\neq {\bf y}}}
\frac{|{\bf v}({\bf x})-{\bf v}({\bf y})|}{|{\bf x}-{\bf y}|^\alpha}.
\end{align*}

In addition, we denote by
\begin{align}\label{1.6}
\begin{cases}
\dot{f}\triangleq f_t+\mathbf{u}\cdot\nabla f,\\
F\triangleq(2\mu+\lambda)\divv\mathbf{u}-(P(\rho)-P(\tilde{\rho}))-\frac12|\mathbf{B}|^2,\\
\omega\triangleq\curl\mathbf{u}=-\nabla^\bot\cdot\mathbf{u}=-\partial_2u^1+\partial_1u^2,
\end{cases}
\end{align}
which represent the material derivative of $f$, the effective viscous flux, and the vorticity, respectively. Then it follows from $\eqref{a1}_2$ that
\begin{equation}\label{1.7}
  \Delta F=\divv(\rho\dot{\mathbf{u}}-\mathbf{B}\cdot\nabla\mathbf{B}), \ \ \ \mu\Delta\omega=\curl(\rho\dot{\mathbf{u}}-\mathbf{B}\cdot\nabla\mathbf{B}).
\end{equation}
Moreover, we introduce the Leray-Helmholtz projection
\begin{equation*}
\mathcal{P}\triangleq \text{Id}+\nabla(-\Delta)^{-1}\divv
\end{equation*}
onto the subspace of divergence-free vector fields and $\mathcal{Q}\triangleq \text{Id}-\mathcal{P}$. In particular, because
$\mathcal P$ and $\mathcal Q$ are smooth homogeneous of degree $0$ Fourier multipliers, they map $L^p$ into itself for any $1<p<\infty$.

We recall the definition of weak solutions to the Cauchy problem \eqref{a1}--\eqref{a3} in the sense of \cite{Hoff95,Hoff95*}.
\begin{definition}\label{d1.1}
A triplet $(\rho, \mathbf{u}, \mathbf{B})$ is said to be a weak solution to the problem \eqref{a1}--\eqref{a3} provided that
\begin{equation*}
(\rho-\tilde{\rho}, \rho\mathbf{u}, \mathbf{B}) \in C([0,\infty);H^{-1}(\mathbb{R}^2)),\ \
(\nabla\mathbf{u}, \nabla\mathbf{B})\in L^2(\mathbb{R}^2\times(0,\infty))
\end{equation*}
with $\divv \mathbf{B}(\cdot,t)=0$ in $\mathcal{D}'(\mathbb{R}^2)$ for $t>0$ and  $(\rho,\mathbf{u},\mathbf{B})|_{t=0}=(\rho_0,\mathbf{u}_0,\mathbf{B}_0)$.
Moreover, for any $t_2\ge t_1\ge 0$ and any $C^1$ test function $\psi({\bf x},t)$ with uniformly bounded support in ${\bf x}$ for $t\in[t_1,t_2]$, the following identities hold\footnote{Throughout this paper, we will use the Einstein summation over repeated indices convention.}:

\begin{align*}
&\int\rho(\mathbf{x},\cdot)\psi(\mathbf{x},\cdot)
\mathrm{d}\mathbf{x}\Big|_{t_1}^{t_2}=\int_{t_1}^{t_2}\int(\rho\psi_t+
\rho\mathbf{u}\cdot\nabla\psi)\mathrm{d}\mathbf{x}\mathrm{d}t,\\
&\int(\rho u^j)(\mathbf{x},\cdot)\psi
(\mathbf{x},\cdot)\mathrm{d}\mathbf{x}\Big|_{t_1}^{t_2}
+\int_{t_1}^{t_2}
\int\big(\mu\nabla u^j\cdot\nabla\psi+(\mu+\lambda)\divv\mathbf{u}\psi_j\big)
\mathrm{d}\mathbf{x}\mathrm{d}t\notag\\
&\qquad=\int_{t_1}^{t_2}
\int\Big(\rho u^j\psi_t+\rho u^j\mathbf{u}\cdot\nabla\psi+P\psi_j
+\frac{1}{2}|\mathbf{B}|^2\psi_j-B^j\mathbf{B}\cdot\nabla\psi\Big)
\mathrm{d}\mathbf{x}\mathrm{d}t,\\
&\int B^j(\mathbf{x},\cdot)\psi(\mathbf{x},\cdot)
\mathrm{d}\mathbf{x}\Big|_{t_1}^{t_2}=\int_{t_1}^{t_2}\int\big(B^j\psi_t+
B^j\mathbf{u}\cdot\nabla\psi-u^j\mathbf{B}\cdot\nabla\psi-\nu\nabla B^j\cdot\nabla\psi\big)\mathrm{d}\mathbf{x}\mathrm{d}t.
\end{align*}
\end{definition}
Concerning the initial data $(\rho_0,\mathbf{u}_0,\mathbf{B}_0)$, we assume that there exist two positive constants $\hat{\rho}$ and $M$ (not necessarily small) satisfying
\begin{gather}\label{c1}
0\leq\inf\rho_0\leq\sup\rho_0\leq\hat{\rho},\ \ \divv \mathbf{B}_0=0,
\\
C_0+\mu\|\nabla\mathbf{u}_0\|_{L^2}^2+(\mu+\lambda)\|\divv\mathbf{u}_0\|_{L^2}^2+\nu\|\nabla\mathbf{B}_0\|_{L^2}^2
\leq M.\label{c2}
\end{gather}
Additionally, \eqref{1.4} implies that for some $C=C(\tilde{\rho},\hat{\rho})>0$,
\begin{equation}\label{1.10}
  \frac{1}{C}(\rho-\tilde{\rho})^2\leq G(\rho)\leq C(\rho-\tilde{\rho})^2.
\end{equation}
Here and hereafter, we occasionally write $C(f)$ to denote the dependence of a generic constant on $f$.

We now state our first result on the global existence of weak solutions.

\begin{theorem}\label{t1.1}
Let \eqref{c1} and \eqref{c2} be satisfied, there exists a positive number $D$ depending only on $\tilde{\rho}$, $\hat{\rho}$, $a$, $\gamma$, $\nu$, and $\mu$ such that if
\begin{equation}\label{lam}
\lambda\geq\exp\bigg\{(2+M)^{\exp\big\{D(1+C_0)^6\big\}}\bigg\},
\end{equation}
then the Cauchy problem \eqref{a1}--\eqref{a3} admits a global weak solution $(\rho,\mathbf{u},\mathbf{B})$ in the sense of Definition $\ref{d1.1}$ satisfying
\begin{equation}\label{reg}
\begin{cases}0\leq\rho(\mathbf{x},t)\leq2\hat{\rho}~a.e.~\mathrm{on}~\mathbb{R}^2\times[0,\infty),\\
(\rho-\tilde{\rho},\sqrt{\rho}\mathbf{u},\mathbf{B})\in C([0,\infty);L^2(\mathbb{R}^2)),~(\mathbf{u},\mathbf{B})\in L^\infty(0,\infty;H^1(\mathbb{R}^2)),\\
(\nabla^2\mathcal{P}\mathbf{u},\nabla F,\sqrt{\rho}\dot{\mathbf{u}},\mathbf{B}_t,\nabla^2\mathbf{B})\in L^2(\mathbb{R}^2\times(0,\infty)),\\
\sigma^{\frac{1}{2}}(\sqrt{\rho}\dot{\mathbf{u}},\mathbf{B}_t,\nabla^2\mathbf{B})\in L^\infty(0,\infty;L^2(\mathbb{R}^2)),~ \sigma^{\frac{1}{2}}(\nabla\dot{\mathbf{u}},\nabla\mathbf{B}_t)
\in L^2(\mathbb{R}^2\times(0,\infty)),
\end{cases}
\end{equation}
where $\sigma\triangleq\min\{1,t\}$.
\end{theorem}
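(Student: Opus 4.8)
The plan is to realize the weak solution as a limit of smooth approximate solutions and to close a continuation argument for the uniform bound $0\le\rho\le 2\hat\rho$, the smallness of the density oscillation being purchased from the largeness of $\lambda$ in \eqref{lam}. First I would regularize the data: mollify $(\rho_0,\mathbf{u}_0,\mathbf{B}_0)$, add a small positive floor $\delta>0$ to the density so as to remove vacuum, and solve the resulting non-degenerate compressible MHD system on $\mathbb{R}^2$ by the known local theory. Every subsequent estimate must be obtained uniformly in the mollification width and in $\delta$, so that the bounds survive the limits $\delta\to0$ and width $\to0$; the normalization \eqref{1.10} is used throughout to convert potential energy into the $L^2$ norm of $\rho-\tilde\rho$.

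The core is a closed hierarchy of a priori estimates, carried out under the bootstrap hypothesis $\rho\le 2\hat\rho$ on a maximal interval $[0,T]$, in the spirit of Hoff. I would proceed in three layers. First, the global energy law of the Introduction gives $\sup_t\big(\|\rho-\tilde\rho\|_{L^2}^2+\|\sqrt\rho\,\mathbf{u}\|_{L^2}^2+\|\mathbf{B}\|_{L^2}^2\big)+\int_0^T\big(\mu\|\nabla\mathbf{u}\|_{L^2}^2+(\mu+\lambda)\|\divv\mathbf{u}\|_{L^2}^2+\nu\|\nabla\mathbf{B}\|_{L^2}^2\big)\le C_0$. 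Second, testing $\eqref{a1}_2$ with $\dot{\mathbf{u}}$ and coupling it with $L^2$ estimates for $\mathbf{B}_t$ and $\nabla^2\mathbf{B}$ drawn from $\eqref{a1}_3$ yields control of $\sup_t\big(\|\nabla\mathbf{u}\|_{L^2}^2+\|\nabla\mathbf{B}\|_{L^2}^2\big)$ together with $\int_0^T\big(\|\sqrt\rho\,\dot{\mathbf{u}}\|_{L^2}^2+\|\mathbf{B}_t\|_{L^2}^2+\|\nabla^2\mathbf{B}\|_{L^2}^2\big)$. Third, time-weighted estimates with weight $\sigma=\min\{1,t\}$ produce exactly the higher regularity recorded in \eqref{reg}, namely $\sigma^{1/2}(\sqrt\rho\,\dot{\mathbf{u}},\mathbf{B}_t,\nabla^2\mathbf{B})\in L^\infty_tL^2$ and $\sigma^{1/2}(\nabla\dot{\mathbf{u}},\nabla\mathbf{B}_t)\in L^2$. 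Across all three layers the effective viscous flux $F$ of \eqref{1.6} and the elliptic identities \eqref{1.7} are the workhorses: they convert $L^p$ control of $\rho\dot{\mathbf{u}}-\mathbf{B}\cdot\nabla\mathbf{B}$ into control of $\nabla F$ and $\nabla^2\mathcal{P}\mathbf{u}$, thereby absorbing the pressure's lack of regularity into the smoother quantity $F$.

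The decisive step, and the one where the magnetic coupling complicates the Navier--Stokes picture, is the density bound. Along particle trajectories the continuity equation reads $\frac{D}{Dt}\log\rho=-\divv\mathbf{u}=-\frac1{2\mu+\lambda}\big(F+(P(\rho)-P(\tilde\rho))+\tfrac12|\mathbf{B}|^2\big)$, so the total oscillation of $\log\rho$ is governed by $\frac{1}{2\mu+\lambda}\int_0^T\big\|F+(P(\rho)-P(\tilde\rho))+\tfrac12|\mathbf{B}|^2\big\|_{L^\infty}\,\mathrm{d}t$. The hard part is to bound this integral by a quantity growing at most like a power of $\log\lambda$ while the prefactor decays like $(2\mu+\lambda)^{-1}$. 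This is precisely where a Desjardins-type logarithmic interpolation inequality is needed, controlling the $L^\infty$ norms by $L^2$ norms times a logarithm of the higher Sobolev norms already estimated above, together with the two-dimensional control of $\|\mathbf{B}\|_{L^\infty}$ via $\|\mathbf{B}\|_{L^2}$ and $\|\nabla^2\mathbf{B}\|_{L^2}$. Tracking every constant through the Gronwall iterations of the three layers and through the logarithm produces the iterated-exponential threshold \eqref{lam}; choosing $\lambda$ that large forces the oscillation of $\log\rho$ to be so small that the hypothesis $\rho\le 2\hat\rho$ self-improves to $\rho\le\tfrac32\hat\rho$, which closes the bootstrap and extends the estimates to $T=\infty$.

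Finally, with all bounds uniform in the approximation, I would pass to the limit by compactness: Aubin--Lions combined with the $\sigma$-weighted estimates yields strong convergence of $(\mathbf{u},\mathbf{B})$ in suitable spaces, while the compactness of the effective viscous flux $F$, together with the renormalized continuity equation, yields strong convergence of $\rho$. This is enough to pass to the limit in the nonlinear terms $\rho\mathbf{u}\otimes\mathbf{u}$, $P(\rho)$, and $\mathbf{B}\otimes\mathbf{B}$, and to verify the three weak identities of Definition \ref{d1.1}, giving the regularity \eqref{reg}. I expect the principal obstacle to be the calibration of the logarithmic inequality against the $(2\mu+\lambda)^{-1}$ gain in the density estimate, since the magnetic contributions $\mathbf{B}\cdot\nabla\mathbf{B}$ and $\tfrac12|\mathbf{B}|^2$ must be absorbed without spoiling the clean $\lambda$-dependence on which \eqref{lam} rests.
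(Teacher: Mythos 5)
Your overall architecture (mollified data with a positive density floor, Hoff-type layered estimates under a bootstrap, a Lagrangian-trajectory density bound purchased from large $\lambda$, then compactness) is the same as the paper's, but two steps as you describe them would fail.

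First, your bootstrap hypothesis is only $\rho\le 2\hat\rho$, and that is not enough to close the second layer globally in time. When you test $\eqref{a1}_2$ with $\dot{\mathbf u}$, the pressure term (after using the continuity equation and the identity $\divv\mathbf u=\frac{1}{2\mu+\lambda}\big(F+P-P(\tilde\rho)+\frac12|\mathbf B|^2\big)$) produces the quantity $\frac{1}{(2\mu+\lambda)^2}\|P-P(\tilde\rho)\|_{L^4}^4$ on the right-hand side of the Gronwall inequality. Under $\rho\le2\hat\rho$ alone this is bounded pointwise in time but its time integral grows like $T$, so Gronwall gives bounds that blow up as $T\to\infty$ and the continuation argument cannot close. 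The paper's Proposition \ref{p3.1} therefore takes $\frac{1}{(2\mu+\lambda)^2}\int_0^T\|P-P(\tilde\rho)\|_{L^4}^4\,\mathrm{d}t\le 2$ as a \emph{second} bootstrap hypothesis, and closes it in Lemma \ref{l3.3} by a mechanism absent from your proposal: multiplying the pressure equation by $3(P-P(\tilde\rho))^2$, the effective viscous flux generates a damping term $\frac{3\gamma-1}{2\mu+\lambda}\|P-P(\tilde\rho)\|_{L^4}^4$ with a favorable sign, which yields $\frac{1}{2\mu+\lambda}\int_0^T\|P-P(\tilde\rho)\|_{L^4}^4\,\mathrm{d}t\le(2+M)^{\exp\{3D_2(1+C_0)^6\}}$ uniformly in $T$; dividing by one more factor of $2\mu+\lambda$ and invoking \eqref{lam} recovers the hypothesis with constant $1$ instead of $2$. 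Without this pressure-dissipation step, none of your three layers is time-uniform. Relatedly, the Desjardins inequality is not needed where you put it: in the paper it controls $\|\sqrt\rho\,\mathbf u\|_{L^4}$ in the convection term $\int\rho\dot{\mathbf u}\cdot(\mathbf u\cdot\nabla)\mathbf u\,\mathrm{d}\mathbf x$ of the gradient estimate (yielding an inequality $f_1'\le Cg_1f_1\ln f_1$ that closes because $\int g_1$ is bounded by the bootstrap), not to bound $\|F\|_{L^\infty}$, for which plain Gagliardo--Nirenberg $\|F\|_{L^\infty}\le C\|F\|_{L^2}^{1/3}\|\nabla F\|_{L^4}^{2/3}$ suffices.

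Second, your density bound via "the total oscillation of $\log\rho$ is governed by $\frac{1}{2\mu+\lambda}\int_0^T\|F+(P-P(\tilde\rho))+\frac12|\mathbf B|^2\|_{L^\infty}\,\mathrm{d}t$" does not work globally in time: $\|P-P(\tilde\rho)\|_{L^\infty}$ is not time-integrable (it only tends to zero in weak senses), and $\|F\|_{L^\infty}$ is, after $t=1$, only square- or cube-integrable in time via the $\sigma$-weighted estimates, so the integral you propose grows with $T$ no matter how large $\lambda$ is. The signs must be exploited: along a trajectory where $\rho\ge\tilde\rho$, both $P-P(\tilde\rho)$ and $\frac12|\mathbf B|^2$ push $\log\rho$ \emph{down}, so only $F$ needs to be integrated, and even then only on a bounded time interval. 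The paper's Lemma \ref{l3.5} splits into two cases: for excursions inside $[0,1]$ it uses $\int_0^{\sigma(T)}\|F\|_{L^\infty}\,\mathrm{d}t\le(2\mu+\lambda)^{1/6}\exp\{(2+M)^{\cdots}\}$ (growth slower than $2\mu+\lambda$ is all that is needed, so no logarithmic inequality enters); for excursions after $t=1$ it multiplies the equation for $\rho-\tilde\rho$ by $|\rho-\tilde\rho|(\rho-\tilde\rho)$ so that the pressure acts as a cubic damping term, and Young's inequality reduces the forcing to $\int_1^T\big(\|F\|_{L^\infty}^3+\|\mathbf B\|_{L^\infty}^6\big)\mathrm{d}t$, which the time-weighted estimates of Lemma \ref{l3.4} control uniformly in $T$. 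Your proposal is missing both the sign structure and this long-time cubic argument, and as stated the bootstrap $\rho\le2\hat\rho\Rightarrow\rho\le\frac32\hat\rho$ could only be closed on time intervals of bounded length.
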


The next result will treat the incompressible limit (characterised by the large value of the bulk viscosity) of the global weak solutions established in Theorem \ref{t1.1}.

\begin{theorem}\label{t1.2}
Let $\{(\rho^\lambda,{\bf u}^\lambda,{\bf B}^\lambda)({\bf x},t)\}$ be the family of solutions from Theorem \ref{t1.1}. Then, there exists a subsequence $\{\lambda_k\}$
with $\lambda_k\rightarrow\infty$ satisfying
\begin{align}\label{1.13}
 \rho^{\lambda_{k}}\rightarrow \varrho ~~&\text{strongly in} ~ L^2(\mathbb R^2)\ \text{for any}\ t\ge 0,\\
 {\bf u}^{\lambda_{k}}\rightarrow {\bf v},~~{\bf B}^{\lambda_{k}}\rightarrow {\bf b}
 ~~&\text{uniformly on compact sets in}~\mathbb R^2\times(0,\infty),\notag
\end{align}
where $(\varrho,{\bf v},{\bf b})$ is a global weak solution to the inhomogeneous incompressible magnetohydrodynamic equations \eqref{1.5} in the sense of Definition \ref{d1.2} below.
\end{theorem}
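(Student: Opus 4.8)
The plan is to run a compactness argument in $\lambda$, extracting weak limits from the $\lambda$-uniform bounds carried by the solutions of Theorem \ref{t1.1} and upgrading these to the strong convergences \eqref{1.13} needed to pass to the limit in the nonlinear terms. The crucial structural observation is that the energy law forces $(\mu+\lambda)\|\divv\mathbf{u}^\lambda\|_{L^2(\mathbb{R}^2\times(0,T))}^2\le C_0$, so that $\divv\mathbf{u}^\lambda\to 0$ in $L^2(\mathbb{R}^2\times(0,T))$ as $\lambda\to\infty$; in particular the potential part $\mathcal{Q}\mathbf{u}^\lambda=\nabla(-\Delta)^{-1}\divv\mathbf{u}^\lambda$ tends to $0$ and any weak limit $\mathbf{v}$ of $\mathbf{u}^\lambda$ satisfies $\divv\mathbf{v}=0$. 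First I would collect from the proof of Theorem \ref{t1.1} those bounds in \eqref{reg} that are independent of $\lambda$ (the energy bounds, $0\le\rho^\lambda\le 2\hat\rho$, and the weighted estimates for $\sqrt{\rho^\lambda}\dot{\mathbf{u}}^\lambda$, $\nabla F^\lambda$, $\nabla^2\mathcal{P}\mathbf{u}^\lambda$, $\mathbf{B}^\lambda_t$, $\nabla^2\mathbf{B}^\lambda$), and use Banach--Alaoglu to pass to a subsequence $\lambda_k\to\infty$ along which $\rho^{\lambda_k}\rightharpoonup\varrho$ weakly-$*$ in $L^\infty$, $\mathbf{u}^{\lambda_k}\rightharpoonup\mathbf{v}$ and $\mathbf{B}^{\lambda_k}\rightharpoonup\mathbf{b}$ weakly in the relevant spaces, with $0\le\varrho\le2\hat\rho$ and $\divv\mathbf{v}=\divv\mathbf{b}=0$.

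Next I would upgrade the velocity and magnetic convergences to the locally uniform convergence asserted in \eqref{1.13}. For $\mathbf{B}^{\lambda_k}$ this is the more routine case: the uniform bounds $\mathbf{B}^\lambda\in L^\infty(0,\infty;H^1)$, $\sigma^{1/2}\nabla^2\mathbf{B}^\lambda\in L^\infty(0,\infty;L^2)$ together with $\mathbf{B}^\lambda_t\in L^2$ allow an Aubin--Lions--Simon argument on $[\tau,T]\times K$ (for $\tau>0$ and compact $K\subset\mathbb{R}^2$), while the embedding $H^2(\mathbb{R}^2)\hookrightarrow C^{0,\alpha}$ and the control of $\mathbf{B}^\lambda_t$ give the equicontinuity in space and time needed for Arzel\`a--Ascoli. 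For $\mathbf{u}^{\lambda_k}$ I would exploit the splitting $\mathbf{u}^\lambda=\mathcal{P}\mathbf{u}^\lambda+\mathcal{Q}\mathbf{u}^\lambda$: the gradient part tends to $0$ in $H^1$ since $\|\nabla\mathcal{Q}\mathbf{u}^\lambda\|_{L^2}\lesssim\|\divv\mathbf{u}^\lambda\|_{L^2}\to0$, and the solenoidal part is compact thanks to the uniform $L^2$-in-time control of $\nabla^2\mathcal{P}\mathbf{u}^\lambda$ and the material-derivative bound $\sqrt{\rho^\lambda}\dot{\mathbf{u}}^\lambda\in L^2$, which governs $\partial_t\mathbf{u}^\lambda$ in a density-weighted sense. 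Here the Desjardins-type logarithmic interpolation inequality advertised in the abstract is what converts the $L^\infty(0,\infty;H^1)$ bound on $\mathcal{P}\mathbf{u}^\lambda$ into a uniform modulus of continuity, yielding uniform convergence on compact subsets of $\mathbb{R}^2\times(0,\infty)$.

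With $\mathbf{u}^{\lambda_k}\to\mathbf{v}$ strongly in hand, I would then prove the strong $L^2$ convergence of the density. Passing to the limit in the continuity equation is immediate because $\rho^{\lambda_k}\mathbf{u}^{\lambda_k}\rightharpoonup\varrho\mathbf{v}$ (weakly convergent density times strongly convergent velocity) and $\divv\mathbf{u}^{\lambda_k}\to0$, so $\varrho$ is a bounded renormalized solution of the transport equation $\varrho_t+\mathbf{v}\cdot\nabla\varrho=0$ with $\divv\mathbf{v}=0$; DiPerna--Lions theory then gives the conservation $\|\varrho(t)-\tilde\rho\|_{L^2}=\|\rho_0-\tilde\rho\|_{L^2}$. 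On the compressible side, renormalizing the continuity equation and integrating yields the identity
\begin{align*}
\|\rho^{\lambda}(t)-\tilde\rho\|_{L^2}^2=\|\rho_0-\tilde\rho\|_{L^2}^2-\int_0^t\!\!\int\big[(\rho^\lambda-\tilde\rho)^2+2\tilde\rho(\rho^\lambda-\tilde\rho)\big]\divv\mathbf{u}^\lambda\,\mathrm{d}\mathbf{x}\,\mathrm{d}\tau,
\end{align*}
and since $\rho^\lambda-\tilde\rho$ is bounded in $L^\infty(0,T;L^2)\cap L^\infty$ while $\divv\mathbf{u}^\lambda\to0$ in $L^2(\mathbb{R}^2\times(0,T))$, the last integral vanishes as $\lambda\to\infty$, uniformly on $[0,T]$. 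Hence $\|\rho^{\lambda_k}(t)-\tilde\rho\|_{L^2}\to\|\rho_0-\tilde\rho\|_{L^2}=\|\varrho(t)-\tilde\rho\|_{L^2}$, and combined with the weak convergence $\rho^{\lambda_k}\rightharpoonup\varrho$ this forces the strong $L^2$ convergence in \eqref{1.13}, for every $t\ge0$.

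Finally I would pass to the limit in the weak formulations of Definition \ref{d1.1}. For the momentum equation I would restrict to divergence-free test vectors, which annihilates the three $\partial_j\psi$ terms carrying the pressure $P(\rho^\lambda)$, the bulk-viscosity contribution $(\mu+\lambda)\divv\mathbf{u}^\lambda$, and the magnetic pressure $\frac12|\mathbf{B}^\lambda|^2$; this is precisely what removes the delicate need to identify the limit of $(\mu+\lambda)\divv\mathbf{u}^\lambda$, which is absorbed into the Lagrange multiplier $\Pi$ of \eqref{1.5}. The remaining terms pass to the limit by the established convergences: $\rho^{\lambda_k}\mathbf{u}^{\lambda_k}\otimes\mathbf{u}^{\lambda_k}\to\varrho\mathbf{v}\otimes\mathbf{v}$ and $\mathbf{B}^{\lambda_k}\otimes\mathbf{B}^{\lambda_k}\to\mathbf{b}\otimes\mathbf{b}$ (strong times strong), while $\nabla\mathbf{u}^{\lambda_k}\rightharpoonup\nabla\mathbf{v}$ handles the linear viscous term. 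The magnetic equation is treated analogously, the term $\mathbf{B}^\lambda\divv\mathbf{u}^\lambda$ disappearing together with $\divv\mathbf{u}^\lambda$, so that $\mathbf{b}_t+\mathbf{v}\cdot\nabla\mathbf{b}=\mathbf{b}\cdot\nabla\mathbf{v}+\nu\Delta\mathbf{b}$ holds, and the initial velocity of the limit is $\mathbf{v}_0=\mathcal{P}\mathbf{u}_0$ since $\mathcal{Q}\mathbf{u}^\lambda\to0$. After checking the attainment of the initial data and the claimed regularity class, the triple $(\varrho,\mathbf{v},\mathbf{b})$ is a weak solution of \eqref{1.5} in the sense of Definition \ref{d1.2}. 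I expect the genuine difficulty to lie not in this passage to the limit but in securing the \emph{$\lambda$-independence} of the bounds of Theorem \ref{t1.1} and, correspondingly, in the compactness of $\mathbf{u}^\lambda$ near the vacuum set, where the material-derivative control degenerates and the Desjardins-type inequality must be invoked with care.
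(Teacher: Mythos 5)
Your proposal is correct and follows essentially the same route as the paper: $\divv\mathbf{u}^\lambda\to0$ from the energy law; locally uniform convergence of $(\mathbf{u}^{\lambda_k},\mathbf{B}^{\lambda_k})$ from the $\lambda$-independent regularity bounds via Ascoli--Arzel\`a (the paper reuses the H\"older estimates \eqref{4.1}--\eqref{4.2} from the proof of Theorem \ref{t1.1}); strong $L^2$ convergence of the density by exactly your mechanism, namely the compressible-side norm identity whose $\divv\mathbf{u}^\lambda$-integral decays like $\lambda^{-1/2}$ (the paper's \eqref{5.4}, derived on the smooth approximations $(\rho^{\epsilon,\lambda},\mathbf{u}^{\epsilon,\lambda})$ and then letting $\epsilon\to0$, rather than by renormalizing the weak solution directly), the conservation $\|(\varrho-\tilde\rho)(\cdot,t)\|_{L^2}=\|\rho_0-\tilde\rho\|_{L^2}$ for the limit transport equation (the paper's mollifier--commutator argument with Lemma \ref{lcom} is precisely the DiPerna--Lions renormalization you invoke), and the fact that weak plus norm convergence implies strong convergence; and finally passage to the limit against divergence-free test functions, which absorbs the pressure, bulk-viscosity, and magnetic-pressure terms. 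The one inaccuracy is your attribution of the uniform modulus of continuity of $\mathcal{P}\mathbf{u}^\lambda$ to the Desjardins-type logarithmic inequality: that inequality only controls $\|\sqrt{\rho}\mathbf{u}\|_{L^4}$ and cannot upgrade an $H^1$ bound to equicontinuity in two dimensions; the equicontinuity actually comes from the time-weighted estimates in \eqref{reg} (which give $\|\nabla\mathbf{u}^\lambda\|_{L^4}$, hence the spatial H\"older-$\frac12$ bound \eqref{4.1} and the temporal H\"older-$\frac16$ bound \eqref{4.2}), and since you list those bounds among your ingredients this is a misattribution rather than a gap in substance.
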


\begin{definition}\label{d1.2}
A triplet $(\varrho, {\bf v}, {\bf b})$ is said to be a weak solution to Cauchy problem \eqref{1.5} provided that
\begin{gather}
\varrho\in L^\infty(\mathbb R^2\times (0,\infty)),~~
(\sqrt{\varrho}\mathbf{v},\mathbf{b})\in L^\infty([0,\infty); L^2(\mathbb{R}^2)),
\notag\\
(\nabla{\bf v},\nabla\mathbf{b})\in L^2(\mathbb R^2\times (0,\infty)),\notag\\
(\varrho-\tilde{\rho})\in C([0,\infty);L^{2}(\mathbb R^2)),\label{1.14}\\
 {\bf v}\in L^2(\mathbb R^2\times (0,T)), ~~ \text{for any} \ T>0.\label{1.15}
\end{gather}
Moreover, for any $t_2\geq t_1\geq 0$ and any $C^1$ test function $(\phi,\boldsymbol\psi)$,
with uniformly bounded support in ${\bf x}$ for $t\in[t_1,t_2]$ and satisfying $\divv\boldsymbol\psi(\cdot,t)=0$ on $\mathbb R^2\times[0,\infty)$, the following identities hold:
\begin{align}\label{1.16}
\int\varrho({\bf x},\cdot)\phi({\bf x},\cdot)\mathrm{d}{\bf x}\Big |_{t_1}^{t_2}&=\int_{t_1}^{t_2}\int(\varrho\phi_t+\varrho{\bf v}\cdot\nabla\phi)\mathrm{d}{\bf x}\mathrm{d}t,
\\
\int(\varrho{\bf v}\cdot\boldsymbol\psi)({\bf x},\cdot)\mathrm{d}{\bf x}\Big |_{t_1}^{t_2}
&=
\int_{t_1}^{t_2}\int\big(\varrho{\bf v}\cdot\boldsymbol\psi_t+\varrho{ v}^i{\bf v}\cdot\nabla_i\boldsymbol\psi-\mu\nabla{\bf v}:\nabla\boldsymbol\psi-b^j\mathbf{b}\cdot\nabla\psi^j\big)\mathrm{d}{\bf x}\mathrm{d}t,\label{1.17}
\\
\int({\bf b}\cdot\boldsymbol\psi)({\bf x},\cdot)
\mathrm{d}\mathbf{x}\Big|_{t_1}^{t_2}&=\int_{t_1}^{t_2}\int
\big({\bf b}\cdot\boldsymbol\psi_t+b^j\mathbf{v}\cdot\nabla\psi^j-v^j\mathbf{b}\cdot\nabla\psi^j
-\nu\nabla{\bf b}:\nabla\boldsymbol\psi\big)
\mathrm{d}\mathbf{x}\mathrm{d}t.\label{1.18}
\end{align}
\end{definition}

Several remarks are in order.

\begin{remark}
It should be noted that Theorem \ref{t1.1} holds for arbitrarily large initial energy as long as the bulk viscosity coefficient is suitably large, which is in sharp contrast to \cite{LXZ13,SH12} where the small initial energy is needed.
\end{remark}

\begin{remark}
It should be pointed out that Theorems \ref{t1.1} and \ref{t1.2} also hold for the isentropic compressible Navier--Stokes equations (i.e., $\mathbf{B}\equiv\mathbf{B}_0\equiv\mathbf{0}$). Thus, we extend the significant results obtained by Danchin and Mucha in \cite{DM17,DM23} to the case of isentropic compressible MHD equations. However, this is a non-trivial generalization. On the one hand, compared with \cite[Theorem 1.1 and Corollary 1.1]{DM17}, the density in our theorems is allowed to have large oscillation and vacuum states in interior regions. On the other hand, we do not require the assumption of zero initial total momentum in comparison with \cite[Theorems 2.1 and 2.3]{DM23}.
Furthermore, it seems that the methods used in \cite{DM23} could not be employed to tackle the case of unbounded domains directly.
\end{remark}

\begin{remark}
We emphasize that non-vacuum behavior at infinity for the density plays a crucial role in our analysis as we can use Desjardins-type logarithmic interpolation inequality (Lemma 2.4 in \cite{Z22}) in this case to obtain the high integrability of velocity. It is natural to ask whether analogous results hold for the 2D Cauchy problem with
far-field vacuum. New ideas are required to handle this case, which will be left for future studies.
\end{remark}

\subsection{Strategy of the proof}

We now outline the main challenges and strategies. To prove Theorem \ref{t1.1}, we construct global smooth approximate solutions by applying the local existence theory with strictly positive initial density and the blow-up criterion (see Lemma \ref{l2.1}). The proof of Theorem \ref{t1.2} then follows from the uniform estimates in Theorem \ref{t1.1} via compactness arguments. Therefore, the core of our analysis is to establish uniform {\it a priori} estimates independent of the lower bound of density $\rho$ and the bulk viscosity $\lambda$. It should be pointed out that the crucial techniques used in \cite{DM23} dealing with isentropic compressible Navier--Stokes equations cannot be adopted to the situation treated here, since their arguments depend heavily on the boundedness of domains and the assumption of zero initial total momentum. Moreover, the presence of initial vacuum and large oscillation leads to more complicated analysis than the previous 2D Cauchy problem \cite{DM17}. Furthermore, the absence of small initial energy (when $\lambda$ is large enough) will bring out some new difficulties in comparison with the 3D Cauchy problem \cite{HWZ}. Last but not least, the strong
velocity-magnetic field coupling and nonlinear terms like ${\bf u}\cdot\nabla {\bf B}$
and ${\bf B}\cdot\nabla {\bf B}$ pose additional challenges. Consequently, some new ideas are needed to overcome these obstacles.

It was shown (with minor modification) in \cite{WT} that if $0<T^*<\infty$ is the maximal existence time of strong solutions to \eqref{a1}--\eqref{a3}, then
\begin{align*}
\limsup\limits_{T\nearrow T^*}\|\rho\|_{L^\infty(0,T;L^\infty)}=\infty,
\end{align*}
which implies that the key uniform-in-$\lambda$ {\it a priori} estimate is to obtain the time-independent upper bound of the density.
First, we attempt to obtain the estimate on the $L^\infty(0, T; L^2)$-norm of the gradient of velocity (see Lemma \ref{l3.2}). To do so, multiplying \eqref{a1}$_2$ by ${\bf u}_t$, we find that one of the key points is to control $\|{\bf u}\|_{L^4}$, which in turn depends on the bounds of $\|{\bf u}\|_{H^1}$. Due to the absence of vacuum at infinity, we observe that the $L^2$-norm of ${\bf u}$ can be bounded by $C_0$ and $\|\nabla{\bf u}\|_{L^2}$ with the help of H{\"o}lder's inequality and Gagliardo--Nirenberg inequality (see \eqref{3.12}).
In addition, from the elementary energy estimate, a key observation is that the periods of large velocity gradient $\|\nabla\mathbf{u}\|_{L^2}$
can only be sustained for short (i.e., measure-limited) intervals of time.
Based on these, we thus divide the arguments concerning the second term $\int\rho \dot{\mathbf{u}}\cdot(\mathbf{u}\cdot\nabla)\mathbf{u}\mathrm{d}\mathbf{x}$ on the right-hand side of \eqref{3.10} into two cases: $\|\nabla\mathbf{u}\|_{L^2}\leq1$ and $\|\nabla\mathbf{u}\|_{L^2}\geq1$, which in turn need to derive the bound of $\|\sqrt{\rho}{\bf u}\|_{L^4}$.
By a crucial Desjardins-type logarithmic interpolation inequality (see Lemma $\ref{log}$), this can be controlled through $\frac{1}{(2\mu+\lambda)^2}\|P(\rho)-P(\tilde{\rho})\|_{L^4}^4$ (see \eqref{3.14}--\eqref{3.16}).

Hence, the crucial issue is the integrability in time of $\frac{1}{(2\mu+\lambda)^2}\|P(\rho)-P(\tilde\rho)\|_{L^4}^4$ (see \eqref{3.26}). To overcome this difficulty, we assume the {\it a priori hypothesis} \eqref{3.1} in Proposition \ref{p3.1}. So, the next key step is to complete the proof of the {\it a priori hypothesis}, that is, to show \eqref{3.2}. We observe that the \textit{effective viscous flux} generates a damping term which provides strong density dissipation, thereby motivating us to build the bound for $L^1(0,T)$-norm of $\frac{1}{2\mu+\lambda}\|P(\rho)-P(\tilde{\rho})\|_{L^4}^4$ (see Lemma $\ref{l3.3}$). At the same time, the appearance of a magnetic field requires the bounds of $\|{\bf B}\|_{L^4}$. To these ends, from the momentum equation \eqref{a1}$_2$, we note that
\begin{equation*}
  \divv\mathbf{u}=\frac{-(-\Delta)^{-1}\divv(\rho\dot{\mathbf{u}}-\mathbf{B}\cdot\nabla\mathbf{B})+P(\rho)-P(\tilde{\rho})+\frac12|\mathbf{B}|^2}{2\mu+\lambda},
\end{equation*}
which further inspires us to establish the estimates on the material derivative of the velocity (see Lemmas $\ref{l3.2}$ and $\ref{l3.4}$).

However, since the bulk viscosity is often coupled with the divergence of the velocity while being inherently contained in the \textit{effective viscous flux}, it seems hard to derive the $L^\infty(0,\min\{1,T\};L^2)$-norm for the material derivative of the velocity (see Lemma $\ref{l3.4}$) in order to isolate $\lambda$ (see, e.g., \eqref{3.39}--\eqref{3.41}). On the one hand, it is challenging to obtain the estimates for the term
\begin{equation*}
  (\mu+\lambda)\sigma\int\dot{u}^j\left(\partial_j\divv\mathbf{u}_t +\divv(\mathbf{u}\partial_j\divv\mathbf{u})\right)\mathrm{d}\mathbf{x}.
\end{equation*}
Thus, motivated by \cite{HWZ}, we shall consider (see \eqref{3.36})
\begin{equation*}
  (\mu+\lambda)\int(\divf\mathbf{u})^2\mathrm{d}\mathbf{x}~~~~
\text{rather than}
~~~~(\mu+\lambda)\int(\divv\dot{\mathbf{u}})^2\mathrm{d}\mathbf{x},
\end{equation*}
where
\begin{equation*}
\divf\mathbf{u}\triangleq\divv\mathbf{u}_t+\mathbf{u}\cdot\nabla\divv\mathbf{u}
=\divv\dot{\mathbf{u}}-u_j^iu_i^j.
\end{equation*}
On the other hand, we have to use the Hodge-type decomposition to isolate the ``bad" terms (divergence-free part) from $\nabla\mathbf{u}$, producing additional troublesome terms,
which are overcome by using integration by parts (see, e.g., \eqref{3.41}). Then we succeed in deriving the desired estimates on $L^\infty(0,\min\{1,T\};L^2)$-norm (see Lemma $\ref{l3.4}$).
Having these estimates at hand, we can
obtain the time-independent upper bound of the density by applying Lagrangian
coordinates technique used in \cite{DE97} (see Lemma $\ref{l3.5}$).
Finally, the proof of \textit{a priori hypothesis} is complete once the bulk viscosity is properly large (see {\it Proof of Proposition \ref{p3.1}}).
Let us emphasize that the effective viscous flux and Desjardins-type logarithmic interpolation inequality play essential roles in our analysis.

The rest of the paper is organized as follows. In the next section, we recall some known facts and elementary inequalities that will be used later. Section \ref{sec3} is devoted to obtaining {\it a priori} estimates. Then we give the proof of Theorem \ref{t1.1} in Section \ref{sec4}, while the proof of Theorem \ref{t1.2} is carried over to the last section.

\section{Preliminaries}\label{sec2}

This section compiles several well-known facts and elementary inequalities.

\subsection{Auxiliary inequalities and facts}
In this subsection we review some known facts and inequalities.
We begin with a lemma concerning the local existence and the possible breakdown of strong solutions to the problem \eqref{a1}--\eqref{a3}, which has been proven in \cite{WT}.
\begin{lemma}\label{l2.1}
Assume that
\begin{equation*}
(\rho_0-\tilde{\rho},\mathbf{u}_0,\mathbf{B}_0)\in H^{2},~~\divv \mathbf{B}_0=0, ~~\text{and}~~\inf\limits_{\mathbf{x}\in\mathbb{R}^2}\rho_0(\mathbf{x})>0,
\end{equation*}
then there exists a positive constant $T$ such that the Cauchy problem \eqref{a1}--\eqref{a3} admits a unique strong solution $(\rho,\mathbf{u},\mathbf{B})$ satisfying
\begin{equation*}
(\rho-\tilde{\rho},\mathbf{u},\mathbf{B})\in C([0,T]; H^{2}),~~\text{and}~~ \inf_{\mathbb{R}^2\times[0,T]}\rho(\mathbf{x},t)\geq\frac{1}{2}
\inf_{\mathbf{x}\in\mathbb{R}^2}\rho_0(\mathbf{x})>0.
\end{equation*}
Moreover, if $T^*$ is the maximal time of existence, then it holds that
\begin{equation*}
\lim\sup_{T\nearrow T^*}\|\rho\|_{L^\infty(0,T;L^\infty)}=\infty.
\end{equation*}
\end{lemma}

The following well-known Gagliardo--Nirenberg inequality (see \cite[Theorem 12.83]{LG}) will be employed.
\begin{lemma}\label{GN}
Let $p\in [2, \infty)$, $q\in(1, \infty)$, and $r\in (2, \infty)$. Then there exists some generic constant $C>0$ which may depend on $p$, $q$, and $r$ such that, for $f\in H^1$ and $g\in L^q\cap D^{1, r}$,
\begin{gather*}
\|f\|_{L^p}\leq C\|f\|_{L^2}^\frac{2}{p}\|\nabla f\|_{L^2}^{1-\frac{2}{p}},\ \ \
\|g\|_{L^\infty}\leq C\|g\|_{L^q}^\frac{q(r-2)}{2r+q(r-2)}\|\nabla g\|_{L^r}^\frac{2r}{2r+q(r-2)}.
\end{gather*}
\end{lemma}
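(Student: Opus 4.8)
The two inequalities are the standard Gagliardo--Nirenberg interpolation estimates specialised to the plane, and the exponents on the right-hand sides are forced by dilation invariance: replacing $f$ by $f(\lambda\,\cdot)$ one checks that the two sides of each inequality scale identically, which both motivates the precise powers and furnishes a consistency check. The plan is to prove each inequality first for smooth, compactly supported functions and then pass to the stated classes by the density of $C_c^\infty(\mathbb R^2)$ in $H^1$ and in $L^q\cap D^{1,r}$.

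For the first inequality I would begin with the endpoint case $p=4$, i.e.\ the two-dimensional Ladyzhenskaya inequality $\|f\|_{L^4}^2\le C\|f\|_{L^2}\|\nabla f\|_{L^2}$. This is obtained by tensorisation: writing $f(x_1,x_2)^2=2\int_{-\infty}^{x_1}f\,\partial_1 f\,\mathrm dt$ gives $\sup_{x_1}f^2\le 2\int_{\mathbb R}|f||\partial_1 f|\,\mathrm dx_1$, and symmetrically in the $x_2$ variable; bounding $\int_{\mathbb R^2}f^4$ by the product of the two one-dimensional suprema and applying Fubini together with the Cauchy--Schwarz inequality yields the claim. The general exponent $p\in[2,\infty)$ then follows by interpolation: for $2\le p\le 4$ one interpolates $\|f\|_{L^p}$ between $L^2$ and $L^4$ and inserts the Ladyzhenskaya bound, while for $p>4$ one applies the two-dimensional Sobolev inequality $\|u\|_{L^2}\le C\|\nabla u\|_{L^1}$ (itself a consequence of the same one-dimensional argument) to $u=|f|^{p/2}$, controls $\int|f|^{p/2-1}|\nabla f|$ by H\"older's inequality, and closes the resulting recursion by interpolation and Young's inequality. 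In each case a direct computation confirms that the powers collapse to $\|f\|_{L^2}^{2/p}\|\nabla f\|_{L^2}^{1-2/p}$.

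For the second inequality the key point is that $r>2$ places us in the Morrey regime in two dimensions, so any $g\in D^{1,r}$ admits a continuous representative obeying the oscillation bound $|g(x)-g(y)|\le C\|\nabla g\|_{L^r}|x-y|^{1-2/r}$. Fixing $x$ and a radius $\rho>0$, I would estimate $|g(x)|$ by the mean of $|g|$ over the ball $B_\rho(x)$ plus the oscillation over that ball; H\"older's inequality bounds the mean by $C\rho^{-2/q}\|g\|_{L^q}$, giving $|g(x)|\le C\rho^{-2/q}\|g\|_{L^q}+C\rho^{1-2/r}\|\nabla g\|_{L^r}$. Optimising the free parameter by balancing the two terms, equivalently choosing $\rho=(\|g\|_{L^q}/\|\nabla g\|_{L^r})^{qr/(2r+q(r-2))}$, produces exactly the exponents $\theta_1=\frac{q(r-2)}{2r+q(r-2)}$ and $\theta_2=\frac{2r}{2r+q(r-2)}$, and taking the supremum over $x$ yields the stated $L^\infty$ bound.

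The individual steps are routine, so the work is chiefly bookkeeping; I expect the main technical care to lie in the second inequality, where one must justify the passage to a continuous representative for $g\in L^q\cap D^{1,r}$ on the unbounded domain $\mathbb R^2$ and verify that the Morrey oscillation estimate and the averaging step combine with the correct scaling. The full range $p\in[2,\infty)$ for non-even values in the first inequality is handled uniformly by the interpolation and iteration scheme rather than by case analysis, and throughout, the dilation check guarantees that no exponent has been misidentified.
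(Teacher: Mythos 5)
Your proof sketch is correct, but it takes a genuinely different route from the paper for the simple reason that the paper offers no proof at all: Lemma \ref{GN} is quoted verbatim from \cite[Theorem 12.83]{LG}, so the paper's ``proof'' is a citation. Your argument is a self-contained derivation, and the key computations check out: the tensorization proof of the Ladyzhenskaya inequality $\|f\|_{L^4}^2\le C\|f\|_{L^2}\|\nabla f\|_{L^2}$, the interpolation for $2\le p\le 4$, the recursion $\|f\|_{L^p}^{p/2}\le C\|f\|_{L^{p-2}}^{p/2-1}\|\nabla f\|_{L^2}$ obtained by applying $\|u\|_{L^2}\le C\|\nabla u\|_{L^1}$ to $u=|f|^{p/2}$ for $p>4$, and, for the sup-norm bound, the Morrey oscillation estimate combined with the mean-value bound $|g(x)|\le C\rho^{-2/q}\|g\|_{L^q}+C\rho^{1-2/r}\|\nabla g\|_{L^r}$ and optimization in $\rho$, which indeed produces the exponents $\frac{q(r-2)}{2r+q(r-2)}$ and $\frac{2r}{2r+q(r-2)}$. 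Two refinements would tighten the write-up. First, for the $L^\infty$ inequality the density reduction to $C_c^\infty$ is delicate as stated: the truncation error $\|g\nabla\chi_R\|_{L^r}$ is most naturally controlled by $\|g\|_{L^\infty}$, i.e.\ by the quantity being proven, so you should either mollify first (so that $g*j_\epsilon$ is bounded with an $\epsilon$-dependent constant, after which truncation is harmless) or, better, run the Morrey-plus-averaging argument directly on the continuous representative of $g\in L^q\cap D^{1,r}$ --- which is what your third paragraph actually does, making density superfluous there. Second, the recursion for $p>4$ closes by direct substitution of the inductive bound at level $p-2$ (descending in steps of $2$ into $[2,4)$); no Young's inequality is needed. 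In terms of trade-offs, your route buys a proof independent of the cited monograph and with explicit, checkable constants depending only on $p$ (respectively $q,r$); the paper's route buys brevity, which is reasonable since both inequalities are classical.
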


Next, we have the following Desjardins-type logarithmic interpolation inequality (see \cite[Lemma 2.4]{Z22}), which extends the case of two-dimensional torus $\mathbb{T}^2$ treated in \cite[Lemma 2]{DE97} (see also \cite[Lemma 1]{D1997}) to the whole plane $\mathbb{R}^2$.
\begin{lemma}\label{log}
Assume that $0\le\rho\le\hat{\rho}$ and $\mathbf{u}\in H^1(\mathbb{R}^2)$, then
\begin{equation*}
\|\sqrt\rho\mathbf{u}\|_{L^4}^2\leq C(\hat{\rho})(1+\|\sqrt\rho\mathbf{u}\|_{L^2})\|\mathbf{u}\|_{H^1}
\sqrt{\ln\left(2+\|\mathbf{u}\|_{H^1}^2\right)}.
\end{equation*}
\end{lemma}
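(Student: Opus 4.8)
The plan is to reduce the claim to an upper bound for $\int \rho|\mathbf{u}|^4\,\mathrm{d}\mathbf{x}$. Since $0\le\rho\le\hat{\rho}$ gives $\rho^2\le\hat{\rho}\rho$, we have $\|\sqrt\rho\mathbf{u}\|_{L^4}^4=\int\rho^2|\mathbf{u}|^4\le\hat{\rho}\int\rho|\mathbf{u}|^4$, so it suffices to prove
$$
\int\rho|\mathbf{u}|^4\,\mathrm{d}\mathbf{x}\le C(\hat{\rho})\big(1+\|\sqrt\rho\mathbf{u}\|_{L^2}^2\big)\|\mathbf{u}\|_{H^1}^2\ln\big(e+\|\mathbf{u}\|_{H^1}^2\big),
$$
after which the stated inequality follows by taking square roots, using $(1+x^2)^{1/2}\le 1+x$ and the elementary comparison $\ln(e+\cdot)\le C\ln(2+\cdot)$ on $[0,\infty)$.

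To prove the displayed bound I would split $\mathbf{u}$ in frequency at a threshold $R>0$ to be chosen: let $\mathbf{u}_1,\mathbf{u}_2$ be defined by $\widehat{\mathbf{u}_1}=\widehat{\mathbf{u}}\,\mathbf{1}_{\{|\xi|\le R\}}$ and $\mathbf{u}_2=\mathbf{u}-\mathbf{u}_1$. The source of the logarithm is the two-dimensional radial computation: by Fourier inversion and Cauchy--Schwarz in frequency,
$$
\|\mathbf{u}_1\|_{L^\infty}\le C\int_{|\xi|\le R}|\widehat{\mathbf{u}}|\,\mathrm{d}\xi\le C\Big(\int_{|\xi|\le R}\frac{\mathrm{d}\xi}{1+|\xi|^2}\Big)^{1/2}\|\mathbf{u}\|_{H^1}=C\sqrt{\ln(1+R^2)}\,\|\mathbf{u}\|_{H^1},
$$
since $\int_{|\xi|\le R}(1+|\xi|^2)^{-1}\mathrm{d}\xi=\pi\ln(1+R^2)$. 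For the high-frequency part, Plancherel's theorem gives $\|\mathbf{u}_2\|_{L^2}\le R^{-1}\|\nabla\mathbf{u}\|_{L^2}$ and $\|\nabla\mathbf{u}_2\|_{L^2}\le\|\nabla\mathbf{u}\|_{L^2}$, so the Gagliardo--Nirenberg inequality of Lemma \ref{GN} yields $\|\mathbf{u}_2\|_{L^4}^4\le C\|\mathbf{u}_2\|_{L^2}^2\|\nabla\mathbf{u}_2\|_{L^2}^2\le CR^{-2}\|\nabla\mathbf{u}\|_{L^2}^4$.

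With these two facts I would estimate $\int\rho|\mathbf{u}|^4\le C\int\rho|\mathbf{u}_1|^4+C\int\rho|\mathbf{u}_2|^4$. The high-frequency term is controlled by $\int\rho|\mathbf{u}_2|^4\le\hat{\rho}\|\mathbf{u}_2\|_{L^4}^4\le C\hat{\rho}R^{-2}\|\nabla\mathbf{u}\|_{L^2}^4$. For the low-frequency term I would use $\int\rho|\mathbf{u}_1|^4\le\|\mathbf{u}_1\|_{L^\infty}^2\int\rho|\mathbf{u}_1|^2$ and then, crucially, write $\mathbf{u}_1=\mathbf{u}-\mathbf{u}_2$ to recover the \emph{weighted} norm: $\int\rho|\mathbf{u}_1|^2\le 2\|\sqrt\rho\mathbf{u}\|_{L^2}^2+2\hat{\rho}\|\mathbf{u}_2\|_{L^2}^2\le 2\|\sqrt\rho\mathbf{u}\|_{L^2}^2+C\hat{\rho}R^{-2}\|\nabla\mathbf{u}\|_{L^2}^2$. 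Combining and inserting the $L^\infty$ bound leads to
$$
\int\rho|\mathbf{u}|^4\le C\ln(1+R^2)\,\|\mathbf{u}\|_{H^1}^2\big(\|\sqrt\rho\mathbf{u}\|_{L^2}^2+R^{-2}\|\nabla\mathbf{u}\|_{L^2}^2\big)+C\hat{\rho}R^{-2}\|\nabla\mathbf{u}\|_{L^2}^4,
$$
and I would finish by optimising with $R^2=e+\|\mathbf{u}\|_{H^1}^2$, which makes each factor $R^{-2}\|\nabla\mathbf{u}\|_{L^2}^2\le 1$ and $\ln(1+R^2)\le C\ln(e+\|\mathbf{u}\|_{H^1}^2)$, yielding the displayed bound.

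I expect the only genuinely delicate point to be the extraction of the logarithm, i.e.\ the borderline failure of the $2$D Sobolev embedding captured by the radial integral $\int_{|\xi|\le R}(1+|\xi|^2)^{-1}\mathrm{d}\xi\sim\ln R^2$; this is precisely where the whole-plane setting replaces the lattice sum used on $\mathbb{T}^2$ in \cite{DE97}. A secondary care is to retain the weighted quantity $\|\sqrt\rho\mathbf{u}\|_{L^2}$ (rather than $\|\mathbf{u}\|_{L^2}$) in the low-frequency term, which is handled by the substitution $\mathbf{u}_1=\mathbf{u}-\mathbf{u}_2$ together with the high-frequency decay; the remaining steps are routine applications of Hölder's and the Gagliardo--Nirenberg inequalities.
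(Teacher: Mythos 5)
Your proof is correct, and it is worth noting that the paper itself gives no argument for this lemma at all: it is quoted directly from \cite[Lemma 2.4]{Z22}, which in turn extends \cite[Lemma 2]{DE97} from $\mathbb{T}^2$ to the whole plane, so there is no in-paper proof to compare against. What you have written is a complete, self-contained proof in the classical Desjardins spirit: the sharp cutoff $\widehat{\mathbf{u}_1}=\widehat{\mathbf{u}}\,\mathbf{1}_{\{|\xi|\le R\}}$ plays the role of the Littlewood--Paley (lattice-sum) truncation used on the torus, and the logarithm is produced by the borderline two-dimensional integral $\int_{|\xi|\le R}(1+|\xi|^2)^{-1}\mathrm{d}\xi=\pi\ln(1+R^2)$, exactly as you identify. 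Each step checks out: the reduction via $\rho^2\le\hat\rho\rho$; the Plancherel bounds $\|\mathbf{u}_2\|_{L^2}\le CR^{-1}\|\nabla\mathbf{u}\|_{L^2}$ and $\|\nabla\mathbf{u}_2\|_{L^2}\le C\|\nabla\mathbf{u}\|_{L^2}$ (normalization constants being harmlessly absorbed into $C$); Ladyzhenskaya's inequality (Lemma \ref{GN} with $p=4$) for the high-frequency part; the recovery of the \emph{weighted} low-frequency norm through $|\mathbf{u}_1|^2\le2|\mathbf{u}|^2+2|\mathbf{u}_2|^2$; and the choice $R^2=e+\|\mathbf{u}\|_{H^1}^2$, which makes every factor $R^{-2}\|\nabla\mathbf{u}\|_{L^2}^2$ at most $1$ and converts $\ln(1+R^2)$ into $C\ln\left(2+\|\mathbf{u}\|_{H^1}^2\right)$, with the elementary comparisons $(1+x^2)^{1/2}\le 1+x$ and $\ln(e+t)\le C\ln(2+t)$ closing the argument. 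The point you flag as delicate is indeed the essential one for the paper's application: in Lemma \ref{l3.2} (Case 2) only $\|\sqrt\rho\mathbf{u}\|_{L^2}\le CC_0^{1/2}$ is available uniformly in time, not a bound on $\|\mathbf{u}\|_{L^2}$ itself, so keeping $\|\sqrt\rho\mathbf{u}\|_{L^2}$ rather than $\|\mathbf{u}\|_{L^2}$ in the low-frequency term is mandatory, and your substitution $\mathbf{u}_1=\mathbf{u}-\mathbf{u}_2$ handles it correctly.
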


Finally, we recall the following commutator estimates in \cite[Lemma 2.3]{PL96}, which play an important role in the mollifier arguments.
\begin{lemma}\label{lcom}
Let $\rho \in L^{p}(\mathbb R^2)$ and ${\bf u} \in W^{1,q}(\mathbb R^2)$ be given functions with $1\le p,q<\infty$ and $\frac{1}{p}+\frac{1}{q}\le 1$. For any $\epsilon>0$, then we have
\begin{equation*}
\|\divv[\rho{\bf u}]_\epsilon-\divv\left([\rho]_\epsilon{\bf u}\right)\|_{L^r}\le C\|\rho\|_{L^p}\|{\bf u}\|_{W^{1,q}},
\end{equation*}
and
\begin{equation*}
\divv[\rho{\bf u}]_\epsilon-\divv\left([\rho]_\epsilon{\bf u}\right)\rightarrow 0\ \text{ in}\ \ L^r(\mathbb R^2)\ \ \text{as}\ \epsilon\rightarrow0,
\end{equation*}
where $C\geq0$ is independent of $\epsilon,\rho$, and ${\bf u}$, and $r$ is determined by $\frac{1}{r}=\frac{1}{p}+\frac{1}{q}$.
\end{lemma}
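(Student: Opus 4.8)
The statement is the classical DiPerna--Lions commutator lemma, so the plan is to follow the standard mollification argument, taking care that everything survives on the unbounded domain $\mathbb{R}^2$. First I would produce an explicit integral representation of
\[
r_\epsilon\triangleq\divv[\rho\mathbf{u}]_\epsilon-\divv([\rho]_\epsilon\mathbf{u}).
\]
Writing $\divv[\rho\mathbf{u}]_\epsilon=\int\partial_{x_i}j_\epsilon(\mathbf{x}-\mathbf{y})\rho(\mathbf{y})u^i(\mathbf{y})\,\mathrm{d}\mathbf{y}$ and expanding $\divv([\rho]_\epsilon\mathbf{u})=u^i\partial_{x_i}[\rho]_\epsilon+[\rho]_\epsilon\divv\mathbf{u}$, the two integrals differ only by whether $\mathbf{u}$ is sampled at $\mathbf{y}$ or $\mathbf{x}$, so they combine into
\[
r_\epsilon(\mathbf{x})=\int\partial_{x_i}j_\epsilon(\mathbf{x}-\mathbf{y})\,\rho(\mathbf{y})\big[u^i(\mathbf{y})-u^i(\mathbf{x})\big]\,\mathrm{d}\mathbf{y}-[\rho]_\epsilon(\mathbf{x})\,\divv\mathbf{u}(\mathbf{x}).
\]
The change of variables $\mathbf{y}=\mathbf{x}-\epsilon\mathbf{z}$ turns the first integral into $\int(\partial_i j)(\mathbf{z})\,\rho(\mathbf{x}-\epsilon\mathbf{z})\,\epsilon^{-1}[u^i(\mathbf{x}-\epsilon\mathbf{z})-u^i(\mathbf{x})]\,\mathrm{d}\mathbf{z}$, exhibiting a difference quotient of $\mathbf{u}$ weighted by the fixed, integrable kernel $|\mathbf{z}|\,|\nabla j(\mathbf{z})|$.

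For the uniform bound I would estimate the $L^r$ norm of this representation by applying Minkowski's integral inequality to pull the norm inside the $\mathbf{z}$-integral, then Hölder's inequality with $\frac1r=\frac1p+\frac1q$. Translation invariance gives $\|\rho(\cdot-\epsilon\mathbf{z})\|_{L^p}=\|\rho\|_{L^p}$, while the elementary difference-quotient bound $\|\mathbf{u}(\cdot-h)-\mathbf{u}(\cdot)\|_{L^q}\le|h|\,\|\nabla\mathbf{u}\|_{L^q}$ for $\mathbf{u}\in W^{1,q}$ controls the quotient by $|\mathbf{z}|\,\|\nabla\mathbf{u}\|_{L^q}$. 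Since $\int|\mathbf{z}|\,|\nabla j(\mathbf{z})|\,\mathrm{d}\mathbf{z}<\infty$ and the leftover term obeys $\|[\rho]_\epsilon\divv\mathbf{u}\|_{L^r}\le\|\rho\|_{L^p}\|\nabla\mathbf{u}\|_{L^q}$ by Young's convolution inequality, this yields $\|r_\epsilon\|_{L^r}\le C\|\rho\|_{L^p}\|\mathbf{u}\|_{W^{1,q}}$ with $C$ depending only on $j$, uniformly in $\epsilon$, $\rho$, and $\mathbf{u}$.

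For the convergence I would first treat smooth $\mathbf{u}$. Writing the difference quotient as $-\int_0^1 z_j(\partial_j u^i)(\mathbf{x}-s\epsilon\mathbf{z})\,\mathrm{d}s$ and letting $\epsilon\to0$, continuity of translation in $L^p$ for $\rho$ together with the smoothness of $\mathbf{u}$ shows the first integral converges in $L^r$ to $-\rho\,\partial_j u^i\int z_j(\partial_i j)\,\mathrm{d}\mathbf{z}=\rho\,\divv\mathbf{u}$, where I use the moment identity $\int z_j\,\partial_i j\,\mathrm{d}\mathbf{z}=-\delta_{ij}$ obtained by integration by parts from $\int j=1$. Simultaneously $[\rho]_\epsilon\divv\mathbf{u}\to\rho\,\divv\mathbf{u}$ in $L^r$ because $[\rho]_\epsilon\to\rho$ in $L^p$, so the two contributions cancel and $r_\epsilon\to0$. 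The general case then follows from the bilinearity of $(\rho,\mathbf{u})\mapsto r_\epsilon$ and the uniform bound: given $\mathbf{u}\in W^{1,q}$, approximate it by $\mathbf{u}_n\in C_c^\infty(\mathbb{R}^2)$ in $W^{1,q}$, split $r_\epsilon[\rho,\mathbf{u}]=r_\epsilon[\rho,\mathbf{u}_n]+r_\epsilon[\rho,\mathbf{u}-\mathbf{u}_n]$, bound the second piece by $C\|\rho\|_{L^p}\|\mathbf{u}-\mathbf{u}_n\|_{W^{1,q}}$, and close with an $\epsilon/2$ argument.

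I expect the delicate point to be the limit passage rather than the bound. Since $\rho$ carries no regularity beyond $L^p$, one cannot differentiate it, so the entire argument hinges on shifting all the differentiation onto $\mathbf{u}$ in the representation formula and then using the uniform bound to reduce to smooth $\mathbf{u}$ by density. A further subtlety worth flagging is that the moment identity must annihilate precisely the $[\rho]_\epsilon\divv\mathbf{u}$ term; it is this exact cancellation that forces the commutator to vanish rather than to converge to a nonzero corrector.
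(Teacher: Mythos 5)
Your proposal is correct: it is the standard DiPerna--Lions mollification/commutator argument, with the representation formula, the Minkowski--H\"older--difference-quotient bound, the moment identity $\int z_j\partial_i j\,\mathrm{d}\mathbf{z}=-\delta_{ij}$ producing the exact cancellation, and the density argument all carried out properly. The paper does not prove this lemma itself but simply cites it from \cite{PL96}, where essentially the same argument is given, so your attempt matches the source's approach.
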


\subsection{Uniform estimates for $F$, $\boldsymbol\omega$, and $\nabla\mathbf{u}$}

The following estimates follow from \eqref{1.7}, the fact $2\mu+\lambda\geq\mu>0$, Gagliardo--Nirenberg inequality, and the standard $L^p$-estimate for elliptic systems.
\begin{lemma}\label{E0}
Let $(\rho,\mathbf{u},\mathbf{B})$ be a smooth solution to the problem \eqref{a1}--\eqref{a3}. Then, for any $2\leq p<\infty$, there exists a generic positive constant $C$ depending only on $p$ and $\mu$ such that
\begin{equation}\label{E1}
\|\nabla \mathbf{u}\|_{L^p}\leq C\big(\|\divv \mathbf{u}\|_{L^p}+\|\omega\|_{L^p}\big),
\end{equation}
\begin{equation}\label{E2}
\|\nabla F\|_{L^p}+\|\nabla^2\mathcal{P}\mathbf{u}\|_{L^p}+\|\nabla\omega\|_{L^p}\leq
C\big(\|\rho\dot{\mathbf{u}}\|_{L^p}+\|\mathbf{B}\cdot\nabla\mathbf{B}\|_{L^p}\big),
\end{equation}
\begin{equation}\label{E3}
\|\nabla\mathcal{P}\mathbf{u}\|_{L^p}+
\|\omega\|_{L^p}\leq C\big(\|\rho\dot{\mathbf{u}}\|_{L^2}+\|\mathbf{B}\cdot\nabla\mathbf{B}\|_{L^2}\big)^{1-\frac{2}{p}}\|\nabla \mathbf{u}\|_{L^2}^\frac{2}{p},
\end{equation}
\begin{equation}\label{E4}
\|F\|_{L^p}\leq
C\big(\|\rho\dot{\mathbf{u}}\|_{L^2}
+\|\mathbf{B}\cdot\nabla\mathbf{B}\|_{L^2}\big)^{1-\frac{2}{p}}
\big[(2\mu+\lambda)\|\divv\mathbf{u}\|_{L^2}+
\|P-P(\tilde{\rho})\|_{L^2}+\|\mathbf{B}\|_{L^4}^2\big]^\frac{2}{p},
\end{equation}
\begin{equation}\label{E5}
\begin{aligned}[b]
\|\nabla \mathbf{u}\|_{L^p}
&\leq
C\big(\|\rho\dot{\mathbf{u}}\|_{L^2}+\|\mathbf{B}\cdot\nabla\mathbf{B}\|_{L^2}\big)^{1-\frac{2}{p}}\|\nabla \mathbf{u}\|_{L^2}^\frac{2}{p}
+\frac{C}{2\mu+\lambda}\|P-P(\tilde{\rho})\|_{L^p}
+\frac{C}{2\mu+\lambda}\|\mathbf{B}\|_{L^{2p}}^2\\
&\quad+\frac{C}{2\mu+\lambda}\big(\|\rho\dot{\mathbf{u}}\|_{L^2}
+\|\mathbf{B}\cdot\nabla\mathbf{B}\|_{L^2}\big)^{1-\frac{2}{p}}
\big(\|P-P(\tilde{\rho})\|_{L^2}+\|\mathbf{B}\|_{L^4}^2\big)^\frac{2}{p}.
\end{aligned}
\end{equation}
\end{lemma}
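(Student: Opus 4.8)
The plan is to read off every one of the five bounds from the two elliptic identities in \eqref{1.7} together with the defining relation for $F$ in \eqref{1.6}, using only the $L^p$-boundedness ($1<p<\infty$) of the zeroth-order Fourier multipliers $\mathcal P,\mathcal Q$ and of the Riesz-type operators built from $\nabla(-\Delta)^{-1}$, followed by Gagliardo--Nirenberg interpolation. For \eqref{E1} I would start from the two-dimensional Hodge identity $\Delta\mathbf u=\nabla\divv\mathbf u+\nabla^\bot\omega$; applying $\nabla(-\Delta)^{-1}$ writes each entry of $\nabla\mathbf u$ as a composition of Riesz transforms acting on $\divv\mathbf u$ and on $\omega$, and boundedness of these degree-zero multipliers on $L^p$ gives \eqref{E1} directly.

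Next, for \eqref{E2} I would invert the Laplacian in \eqref{1.7}. Since $\nabla F=\nabla(-\Delta)^{-1}\divv(\rho\dot{\mathbf u}-\mathbf B\cdot\nabla\mathbf B)=-\mathcal Q(\rho\dot{\mathbf u}-\mathbf B\cdot\nabla\mathbf B)$, and $\mu\nabla\omega$ is, analogously, a degree-zero Fourier multiplier applied to the same source $\rho\dot{\mathbf u}-\mathbf B\cdot\nabla\mathbf B$, the $L^p$-boundedness of these operators controls $\|\nabla F\|_{L^p}$ and $\|\nabla\omega\|_{L^p}$ by $\|\rho\dot{\mathbf u}\|_{L^p}+\|\mathbf B\cdot\nabla\mathbf B\|_{L^p}$. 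The one point needing a little care is $\|\nabla^2\mathcal P\mathbf u\|_{L^p}$: here I would use $\divv\mathcal P\mathbf u=0$ and $\curl\mathcal P\mathbf u=\omega$ to get $\Delta\mathcal P\mathbf u=\nabla^\bot\omega$, whence $\|\nabla^2\mathcal P\mathbf u\|_{L^p}\le C\|\nabla\omega\|_{L^p}$, reducing it to the bound just obtained.

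The estimates \eqref{E3} and \eqref{E4} are then pure interpolation. For \eqref{E3} I would apply the first inequality of Lemma \ref{GN} to $\omega$ (and to $\nabla\mathcal P\mathbf u$), using $\|\omega\|_{L^2}\le\|\nabla\mathbf u\|_{L^2}$ for the low-order factor and the $p=2$ case of \eqref{E2} for $\|\nabla\omega\|_{L^2}$. For \eqref{E4} the same interpolation applied to $F$ combines the $p=2$ case of \eqref{E2} for $\|\nabla F\|_{L^2}$ with the elementary bound $\|F\|_{L^2}\le(2\mu+\lambda)\|\divv\mathbf u\|_{L^2}+\|P-P(\tilde\rho)\|_{L^2}+\tfrac12\|\mathbf B\|_{L^4}^2$ read off from the definition of $F$ in \eqref{1.6}, where I use $\||\mathbf B|^2\|_{L^2}=\|\mathbf B\|_{L^4}^2$. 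Finally, \eqref{E5} follows by inserting the flux relation $\divv\mathbf u=(2\mu+\lambda)^{-1}\big[F+(P-P(\tilde\rho))+\tfrac12|\mathbf B|^2\big]$ into \eqref{E1}: the term $\|\omega\|_{L^p}$ is handled by \eqref{E3}, while $\|\divv\mathbf u\|_{L^p}$ splits into $(2\mu+\lambda)^{-1}$ times $\|F\|_{L^p}$ (estimated by \eqref{E4}), $\|P-P(\tilde\rho)\|_{L^p}$, and $\||\mathbf B|^2\|_{L^p}=\|\mathbf B\|_{L^{2p}}^2$.

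I expect no genuine obstacle here: the content is entirely standard Calder\'on--Zygmund theory together with interpolation, and the presence of the magnetic field only contributes the extra source term $\mathbf B\cdot\nabla\mathbf B$, which is carried along passively throughout. The only things to watch are the bookkeeping --- verifying that each operator built from $(-\Delta)^{-1}$ is indeed a homogeneous multiplier of degree zero so that $L^p$-boundedness applies --- and the identification $\Delta\mathcal P\mathbf u=\nabla^\bot\omega$ used for the second-derivative estimate of the solenoidal part.
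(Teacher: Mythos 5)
Your proposal is correct and follows exactly the route the paper intends: the paper gives no detailed proof, only citing \eqref{1.7}, the fact $2\mu+\lambda\geq\mu>0$, Gagliardo--Nirenberg interpolation, and standard $L^p$ elliptic (Calder\'on--Zygmund) estimates, and your argument assembles precisely these ingredients in the natural way (Hodge identity for \eqref{E1}, Riesz-transform inversion of \eqref{1.7} for \eqref{E2}, interpolation for \eqref{E3}--\eqref{E4}, and the flux relation for \eqref{E5}). The only detail worth making explicit is that in \eqref{E5} the contribution $(2\mu+\lambda)^{-1}\big[(2\mu+\lambda)\|\divv\mathbf{u}\|_{L^2}\big]^{2/p}$ coming from \eqref{E4} is absorbed into the first term via $(2\mu+\lambda)^{2/p-1}\leq\mu^{2/p-1}$ and $\|\divv\mathbf{u}\|_{L^2}\leq C\|\nabla\mathbf{u}\|_{L^2}$, which is exactly where the hypothesis $2\mu+\lambda\geq\mu$ enters.
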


\section{\textit{A priori} estimates}\label{sec3}
In this section we will show some necessary {\it a priori} bounds for the strong solutions guaranteed by Lemma $\ref{l2.1}$. It should be stressed that these bounds are independent of the bulk viscosity $\lambda$, the lower bound of $\rho$, the initial regularity, and the time of existence. More precisely, let $T>0$ be fixed and $(\rho, \mathbf{u}, \mathbf{B})$ be the strong solution to \eqref{a1}--\eqref{a3} in $\mathbb{R}^2\times(0, T]$, we can obtain the following key {\it a priori} estimates on $(\rho, \mathbf{u}, \mathbf{B})$.
\begin{proposition}\label{p3.1}
Under the conditions of Theorem $\ref{t1.1}$, if $(\rho, \mathbf{u}, \mathbf{B})$ is a strong solution to the Cauchy problem \eqref{a1}--\eqref{a3} satisfying
\begin{align}\label{3.1}
\sup_{\mathbb{R}^2\times[0,T]}\rho\le2\hat{\rho},\ \ \frac{1}{(2\mu+\lambda)^2}\int_{0}^{T}
\|P-P(\tilde{\rho})\|_{L^4}^4\mathrm{d}t\le2,
\end{align}
then one has that
\begin{align}\label{3.2}
\sup_{\mathbb{R}^2\times[0,T]}\rho\le\frac{7}{4}\hat{\rho},\ \ \frac{1}{(2\mu+\lambda)^2}\int_{0}^{T}
\|P-P(\tilde{\rho})\|_{L^4}^4\mathrm{d}t\le1.
\end{align}
\end{proposition}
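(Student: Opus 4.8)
The plan is to run a continuity (bootstrap) argument: the hypothesis \eqref{3.1} is assumed on a maximal subinterval of the existence interval, and the role of Proposition \ref{p3.1} is to close it by improving the constants $2\hat{\rho}\to\frac74\hat{\rho}$ and $2\to 1$. Since the improved bounds \eqref{3.2} are \emph{strictly} better than the assumed ones, a standard open--closed argument together with the blow-up criterion of Lemma \ref{l2.1} (which asserts that only $\|\rho\|_{L^\infty(0,T;L^\infty)}$ can blow up) then propagates \eqref{3.1} to all of $[0,T]$ for every $T$. The heart of the matter is therefore a chain of \textit{a priori} estimates, all uniform in $\lambda$ and in the lower bound of $\rho$, established under the working hypothesis \eqref{3.1}; the factor $(2\mu+\lambda)^{-1}$ carried by the pressure is precisely what buys the strict improvement once $\lambda$ is as large as in \eqref{lam}.

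First I would record the global energy law and, using the non-vacuum behaviour at infinity together with \eqref{1.10}, bound $\|\mathbf{u}\|_{L^2}$ by $C_0$ and $\|\nabla\mathbf{u}\|_{L^2}$. The more delicate step is the $L^\infty(0,T;L^2)$ estimate for $\nabla\mathbf{u}$ (Lemma \ref{l3.2}): testing \eqref{a1}$_2$ against $\mathbf{u}_t$, the troublesome convective term $\int\rho\dot{\mathbf{u}}\cdot(\mathbf{u}\cdot\nabla)\mathbf{u}\,\mathrm{d}\mathbf{x}$ must be controlled through $\|\mathbf{u}\|_{L^4}$, which I would dominate by the Desjardins-type logarithmic interpolation inequality of Lemma \ref{log} applied to $\|\sqrt\rho\mathbf{u}\|_{L^4}$, after splitting into the regimes $\|\nabla\mathbf{u}\|_{L^2}\le 1$ and $\|\nabla\mathbf{u}\|_{L^2}\ge 1$ and exploiting that the large-gradient set is measure-limited in time. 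The magnetic contributions $\mathbf{B}\cdot\nabla\mathbf{B}$ and $\frac12\nabla|\mathbf{B}|^2$ are absorbed via \eqref{a1}$_3$ and the elliptic estimates \eqref{E2}--\eqref{E5} of Lemma \ref{E0}, which is exactly where the effective viscous flux $F$ enters.

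The crux is then the time integrability of $\frac{1}{2\mu+\lambda}\|P-P(\tilde{\rho})\|_{L^4}^4$ (Lemma \ref{l3.3}): writing $\divv\mathbf{u}$ through $F$ via \eqref{1.6} shows that the renormalised pressure obeys a transport relation with a damping term of size $(2\mu+\lambda)^{-1}$, and testing against $(P-P(\tilde{\rho}))^3$ turns this damping into the desired $L^1_t$ bound, the $\|\mathbf{B}\|_{L^4}$-terms being handled by the magnetic energy and \eqref{E1}. To feed the right-hand sides I would next estimate $\sqrt\rho\dot{\mathbf{u}}$ in $L^\infty(0,\min\{1,T\};L^2)$ and $\nabla\dot{\mathbf{u}}$ in $L^2$ (Lemma \ref{l3.4}); following \cite{HWZ}, the key device is to work with $\divf\mathbf{u}=\divv\dot{\mathbf{u}}-u_j^iu_i^j$ rather than $\divv\dot{\mathbf{u}}$, so that $(\mu+\lambda)\int(\divv\dot{\mathbf{u}})^2$ is replaced by the more tractable $(\mu+\lambda)\int(\divf\mathbf{u})^2$, the remaining divergence-free part being separated by a Hodge-type decomposition and tamed by integration by parts. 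With these bounds secured, the time-independent pointwise upper bound on $\rho$ (Lemma \ref{l3.5}) follows from the Lagrangian-coordinate argument of \cite{DE97}: along particle paths $\rho$ satisfies an ODE whose damping coefficient is again $O((2\mu+\lambda)^{-1})$, so $\sup\rho$ stays within a factor of $\hat{\rho}$ that shrinks to $1$ as $\lambda\to\infty$.

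Finally, to close Proposition \ref{p3.1} I would collect the uniform constant $K=K(\tilde{\rho},\hat{\rho},a,\gamma,\nu,\mu,C_0,M)$ produced by Lemmas \ref{l3.3}--\ref{l3.5} and observe that
\[
\frac{1}{(2\mu+\lambda)^2}\int_0^T\|P-P(\tilde{\rho})\|_{L^4}^4\,\mathrm{d}t\le\frac{K}{2\mu+\lambda},\qquad \sup_{\mathbb R^2\times[0,T]}\rho\le\hat{\rho}\,\exp\!\Big(\tfrac{K}{2\mu+\lambda}\Big),
\]
so that choosing $\lambda$ as in \eqref{lam} (which dominates $K$ by the iterated-exponential structure of the bound) makes the first quantity $\le 1$ and the second $\le\frac74\hat{\rho}$, which is exactly \eqref{3.2}. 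The main obstacle I anticipate is \emph{isolating the dependence on $\lambda$} throughout this chain: because the bulk viscosity sits inside the effective viscous flux and is multiplied by $\divv\mathbf{u}$, several error terms --- most notably those in the material-derivative estimate of Lemma \ref{l3.4}, such as $(\mu+\lambda)\sigma\int\dot{u}^j\big(\partial_j\divv\mathbf{u}_t+\divv(\mathbf{u}\partial_j\divv\mathbf{u})\big)\,\mathrm{d}\mathbf{x}$ --- threaten to reintroduce an uncontrolled power of $\lambda$, so keeping every constant genuinely independent of $\lambda$ (so that the final division by $2\mu+\lambda$ yields the strict improvement) is where the argument is most fragile, with the strong $\mathbf{u}$--$\mathbf{B}$ coupling through $\mathbf{u}\cdot\nabla\mathbf{B}$ and $\mathbf{B}\cdot\nabla\mathbf{B}$ compounding the bookkeeping.
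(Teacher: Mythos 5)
Your proposal follows essentially the same route as the paper's proof: the same chain of \textit{a priori} estimates under the hypothesis \eqref{3.1} (the energy law, the Desjardins-type $H^1$ bound for $\nabla\mathbf{u}$ with the two-regime splitting $\|\nabla\mathbf{u}\|_{L^2}\le 1$ versus $\ge 1$, the effective-viscous-flux damping estimate giving $\frac{1}{2\mu+\lambda}\int_0^T\|P-P(\tilde{\rho})\|_{L^4}^4\,\mathrm{d}t\le K$, the time-weighted material-derivative estimate built on $\divf\mathbf{u}$ and the Hodge decomposition, and the Lagrangian particle-path bound on the density), closed by the same mechanism of letting the spare factor $(2\mu+\lambda)^{-1}$ absorb the large constant once $\lambda$ satisfies \eqref{lam}. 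This is precisely how the paper proves Proposition \ref{p3.1}, namely by combining Lemmas \ref{l3.2}--\ref{l3.5} under the \textit{a priori} hypothesis \eqref{3.1} and taking $D\ge 5D_2$ in \eqref{lam}.
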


Before proving Proposition \ref{p3.1}, we establish some necessary \textit{a priori} estimates, see Lemmas \ref{l3.1}--\ref{l3.4} below. We begin with the elementary energy estimate of $(\rho, \mathbf{u}, \mathbf{B})$.
\begin{lemma}\label{l3.1}
It holds that
\begin{align}\label{3.3}
\sup_{0\le t\le T}\int\bigg(\frac{1}{2}\rho |\mathbf{u}|^2+\frac{1}{2} |\mathbf{B}|^2+G(\rho)\bigg)\mathrm{d}\mathbf{x}+\int_0^T\left[\mu\|\nabla \mathbf{u}\|_{L^2}^2+(\mu+\lambda)\|\divv \mathbf{u}\|_{L^2}^2+\nu\|\nabla \mathbf{B}\|_{L^2}^2\right]\mathrm{d}t
\leq C_0.
\end{align}
\end{lemma}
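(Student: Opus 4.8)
The plan is to derive the exact energy balance by the classical two-step energy method and then integrate in time. Concretely, I test the momentum equation \eqref{a1}$_2$ against $\mathbf{u}$, test the induction equation \eqref{a1}$_3$ against $\mathbf{B}$, add the two resulting identities, and exploit the fact that the magnetic coupling terms produced by the two tests are exactly opposite and therefore cancel. All integrations by parts over $\mathbb{R}^2$ are legitimate because the solution enjoys the regularity $(\rho-\tilde\rho,\mathbf{u},\mathbf{B})\in C([0,T];H^2)$ from Lemma \ref{l2.1}, and the far-field conditions \eqref{a3} guarantee that the boundary fluxes at infinity vanish.

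For the kinetic contribution, multiplying \eqref{a1}$_2$ by $\mathbf{u}$ and invoking the continuity equation \eqref{a1}$_1$ turns the inertial terms into $\frac{\mathrm{d}}{\mathrm{d}t}\int\frac12\rho|\mathbf{u}|^2\,\mathrm{d}\mathbf{x}$, while the viscous terms yield $-\mu\|\nabla\mathbf{u}\|_{L^2}^2-(\mu+\lambda)\|\divv\mathbf{u}\|_{L^2}^2$ after integration by parts. The pressure term is converted into the time derivative of the potential energy: writing $\nabla P=\nabla(P-P(\tilde\rho))$ and integrating by parts gives $-\int(P-P(\tilde\rho))\divv\mathbf{u}\,\mathrm{d}\mathbf{x}$, and the algebraic identity $\rho G'(\rho)-G(\rho)=P(\rho)-P(\tilde\rho)$, which follows at once from the definition \eqref{1.4}, shows that $G(\rho)$ is a renormalized solution of \eqref{a1}$_1$, so that $\frac{\mathrm{d}}{\mathrm{d}t}\int G(\rho)\,\mathrm{d}\mathbf{x}=-\int(P-P(\tilde\rho))\divv\mathbf{u}\,\mathrm{d}\mathbf{x}$. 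Finally, using $\divv\mathbf{B}=0$ to write $\int(\mathbf{B}\cdot\nabla\mathbf{B})\cdot\mathbf{u}\,\mathrm{d}\mathbf{x}=-\int B^iB^j\partial_iu^j\,\mathrm{d}\mathbf{x}$, the Lorentz force contributes $-\int B^iB^j\partial_iu^j\,\mathrm{d}\mathbf{x}+\frac12\int|\mathbf{B}|^2\divv\mathbf{u}\,\mathrm{d}\mathbf{x}$.

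For the magnetic contribution, testing \eqref{a1}$_3$ with $\mathbf{B}$ produces $\frac{\mathrm{d}}{\mathrm{d}t}\int\frac12|\mathbf{B}|^2\,\mathrm{d}\mathbf{x}+\nu\|\nabla\mathbf{B}\|_{L^2}^2$ from the time-derivative and resistive terms. After integration by parts the transport term $\mathbf{u}\cdot\nabla\mathbf{B}$ and the compression term $\mathbf{B}\divv\mathbf{u}$ combine to $\frac12\int|\mathbf{B}|^2\divv\mathbf{u}\,\mathrm{d}\mathbf{x}$, while $-\mathbf{B}\cdot\nabla\mathbf{u}$ gives $-\int B^iB^j\partial_iu^j\,\mathrm{d}\mathbf{x}$; thus the magnetic-energy identity carries precisely the opposite of the two Lorentz-force terms above. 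Adding the kinetic and magnetic identities, these coupling terms cancel identically, leaving $\frac{\mathrm{d}}{\mathrm{d}t}\int\big(\frac12\rho|\mathbf{u}|^2+\frac12|\mathbf{B}|^2+G(\rho)\big)\,\mathrm{d}\mathbf{x}+\mu\|\nabla\mathbf{u}\|_{L^2}^2+(\mu+\lambda)\|\divv\mathbf{u}\|_{L^2}^2+\nu\|\nabla\mathbf{B}\|_{L^2}^2=0$. Integrating over $[0,t]$ and recalling the definition of $C_0$ gives the balance identity; since all three dissipation terms are nonnegative, both $\sup_{0\le t\le T}\int(\frac12\rho|\mathbf{u}|^2+\frac12|\mathbf{B}|^2+G(\rho))\,\mathrm{d}\mathbf{x}$ and the full dissipation integral are bounded by $C_0$, which is \eqref{3.3}.

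Because the solution is smooth, no real analytical obstacle arises; the entire proof is a structural computation. The two points deserving attention are the verification of the renormalization identity for $G$ and, above all, the exact cancellation of the magnetic coupling terms $-\int B^iB^j\partial_iu^j\,\mathrm{d}\mathbf{x}$ and $\frac12\int|\mathbf{B}|^2\divv\mathbf{u}\,\mathrm{d}\mathbf{x}$ between the kinetic and magnetic identities, which is exactly what makes the total energy conserved up to dissipation. One should also record that the nonzero far-field density $\tilde\rho>0$ is compatible with integrability, since $G(\tilde\rho)=0$ and $G(\rho)\simeq(\rho-\tilde\rho)^2\in L^1$ by \eqref{1.10}, so that $\int G(\rho)\,\mathrm{d}\mathbf{x}$ is finite.
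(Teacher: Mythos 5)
Your proposal is correct and takes essentially the same approach as the paper: the paper likewise derives the renormalized equation $(G(\rho))_t+\divv(G(\rho)\mathbf{u})+(P-P(\tilde{\rho}))\divv\mathbf{u}=0$ from $\eqref{a1}_1$ and \eqref{1.4}, tests $\eqref{a1}_2$ with $\mathbf{u}$ and $\eqref{a1}_3$ with $\mathbf{B}$, adds, integrates by parts so that the magnetic coupling terms cancel, and integrates in time. Your write-up simply makes explicit the identity $\rho G'(\rho)-G(\rho)=P(\rho)-P(\tilde{\rho})$ and the exact cancellation of $-\int B^iB^j\partial_iu^j\mathrm{d}\mathbf{x}$ and $\frac12\int|\mathbf{B}|^2\divv\mathbf{u}\,\mathrm{d}\mathbf{x}$, which the paper leaves implicit.
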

\begin{proof}
By $\eqref{a1}_1$ and the definition of $G(\rho)$ in \eqref{1.4}, we get that
\begin{equation}\label{3.4}
(G(\rho))_t+\divv (G(\rho)\mathbf{u})+(P-P(\tilde{\rho}))\divv \mathbf{u}=0.
\end{equation}
Multiplying $\eqref{a1}_2$ by $\mathbf{u}$ and $\eqref{a1}_3$ by $\mathbf{B}$, respectively, adding the summation to \eqref{3.4}, and integrating (by parts) the resultant over $\mathbb{R}^2$, we arrive at
\begin{align}\label{3.5}
\frac{\mathrm{d}}{\mathrm{d}t}\int\bigg(\frac{1}{2}\rho|\mathbf{u}|^2+\frac{1}{2} |\mathbf{B}|^2+G(\rho)\bigg)\mathrm{d}\mathbf{x}
+\int\left[\mu|\nabla\mathbf{u}|^2+(\mu+\lambda)(\divv\mathbf{u})^2
+\nu|\nabla\mathbf{B}|^2\right]\mathrm{d}\mathbf{x}
=0,
\end{align}
which yields \eqref{3.3} after integrating \eqref{3.5} with respect to $t$ over $(0,T)$.
\end{proof}

The next lemma provides a time-independent estimate for $\nabla \mathbf{u}$ and $\nabla \mathbf{B}$ in $L^\infty(0, T;L^2)$.
\begin{lemma}\label{l3.2}
Let \eqref{3.1} be satisfied, then there exists a positive number $D_2$ depending only on $\tilde{\rho}$, $\hat{\rho}$, $a$, $\gamma$, $\nu$, and $\mu$ such that
\begin{align}\label{3.6}
 &\sup_{0\leq t\leq T}\int\left[\mu|\nabla \mathbf{u}|^2+(\mu+\lambda)(\divv \mathbf{u})^2+\nu|\nabla \mathbf{B}|^2\right]\mathrm{d}\mathbf{x}\notag\\
 &\quad +\int_0^T\big(\|\sqrt{\rho}{\dot{\mathbf{u}}}\|_{L^2}^2+\|\nabla^2 \mathbf{B}\|_{L^2}^2+\|\mathbf{B}_t\|_{L^2}^2\big)
\mathrm{d}t\leq(2+M)^{\exp\big\{2D_2(1+C_0)^6\big\}}
\end{align}
provided that $\lambda$ satisfies \eqref{lam} with $D\geq D_2$.
\end{lemma}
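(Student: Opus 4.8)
The plan is to derive a closed differential inequality for the first-order energy
\[
\mathcal{E}(t) \triangleq \int\Big[\tfrac{\mu}{2}|\nabla\mathbf{u}|^2 + \tfrac{\mu+\lambda}{2}(\divv\mathbf{u})^2 + \tfrac{\nu}{2}|\nabla\mathbf{B}|^2\Big]\mathrm{d}\mathbf{x}
\]
and to close it by a logarithmic Gronwall argument. First I would test \eqref{a1}$_2$ with the material derivative $\dot{\mathbf{u}}=\mathbf{u}_t+\mathbf{u}\cdot\nabla\mathbf{u}$ and integrate by parts, producing $\int\rho|\dot{\mathbf{u}}|^2$ on the left together with $\frac{\mathrm{d}}{\mathrm{d}t}$ of the viscous part of $\mathcal{E}$ and a pressure term $\frac{\mathrm{d}}{\mathrm{d}t}\int(P-P(\tilde\rho))\divv\mathbf{u}$ (rewriting $P_t$ via the continuity equation). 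The remaining terms split into quadratic/cubic expressions in $\nabla\mathbf{u}$ coming from the convective part of $\dot{\mathbf{u}}$, the genuine convection term $\int\rho\dot{\mathbf{u}}\cdot(\mathbf{u}\cdot\nabla)\mathbf{u}$, and the magnetic forcing $\int\dot{\mathbf{u}}\cdot(\mathbf{B}\cdot\nabla\mathbf{B}-\tfrac12\nabla|\mathbf{B}|^2)$. Simultaneously I would test \eqref{a1}$_3$ with $\mathbf{B}_t$ to generate $\int|\mathbf{B}_t|^2+\tfrac{\nu}{2}\frac{\mathrm{d}}{\mathrm{d}t}\|\nabla\mathbf{B}\|_{L^2}^2$, and with $-\Delta\mathbf{B}$ to generate $\nu\|\Delta\mathbf{B}\|_{L^2}^2$ (so that $\|\nabla^2\mathbf{B}\|_{L^2}^2$ is recovered by elliptic regularity); these supply the dissipation needed to dominate the magnetic cross terms.

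The heart of the matter is the convection term, which I bound by $\|\sqrt\rho\dot{\mathbf{u}}\|_{L^2}\|\sqrt\rho\mathbf{u}\|_{L^4}\|\nabla\mathbf{u}\|_{L^4}$. Here I would feed in the effective-viscous-flux estimate \eqref{E5} with $p=4$ to express $\|\nabla\mathbf{u}\|_{L^4}$ through $\|\sqrt\rho\dot{\mathbf{u}}\|_{L^2}$, the magnetic contribution $\|\mathbf{B}\cdot\nabla\mathbf{B}\|_{L^2}$, and the $\lambda$-damped pressure term $\frac{1}{2\mu+\lambda}\|P-P(\tilde\rho)\|_{L^4}$, while $\|\sqrt\rho\mathbf{u}\|_{L^4}$ is controlled by the Desjardins-type inequality of Lemma \ref{log}. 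Since $\rho\to\tilde\rho>0$ at infinity, the plain norm $\|\mathbf{u}\|_{L^2}$ is not part of the energy, so a preliminary step is to bound it: splitting $\mathbb{R}^2$ into $\{\rho\ge\tilde\rho/2\}$ and its complement, using $\int\rho|\mathbf{u}|^2\le 2C_0$ on the former and the finiteness of $|\{\rho<\tilde\rho/2\}|$ (a consequence of $G(\rho)\sim(\rho-\tilde\rho)^2\in L^1$ through \eqref{1.10}) combined with Gagliardo--Nirenberg on the latter yields $\|\mathbf{u}\|_{L^2}^2\lesssim(1+C_0)(1+\|\nabla\mathbf{u}\|_{L^2}^2)$. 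To keep the logarithm manageable I would split into the regimes $\|\nabla\mathbf{u}\|_{L^2}\le1$ and $\|\nabla\mathbf{u}\|_{L^2}\ge1$; by the energy bound \eqref{3.3} the latter times occupy a set of measure $\lesssim C_0/\mu$, exactly what is needed to integrate the logarithmic weight in time.

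Collecting all contributions and absorbing the top-order quantities $\|\sqrt\rho\dot{\mathbf{u}}\|_{L^2}^2$, $\|\mathbf{B}_t\|_{L^2}^2$ and $\nu\|\nabla^2\mathbf{B}\|_{L^2}^2$ into the left-hand side by Young's inequality (using $\sup\rho\le2\hat\rho$ from \eqref{3.1} so that Lemma \ref{log} applies and $\sqrt\rho$ is bounded), I expect to arrive at a differential inequality of the schematic form
\[
\frac{\mathrm{d}}{\mathrm{d}t}\widetilde{\mathcal{E}} + c\big(\|\sqrt\rho\dot{\mathbf{u}}\|_{L^2}^2+\|\mathbf{B}_t\|_{L^2}^2+\|\nabla^2\mathbf{B}\|_{L^2}^2\big) \le C\,\widetilde{\mathcal{E}}\ln\widetilde{\mathcal{E}} + R(t),
\]
where $\widetilde{\mathcal{E}}=2+\mathcal{E}-\int(P-P(\tilde\rho))\divv\mathbf{u}$ is comparable to $2+\mathcal{E}$ and $R(t)$ collects the $\lambda$-damped pressure terms, whose time integral is bounded using the second hypothesis in \eqref{3.1}. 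A Gronwall argument of the type $y'\le Cy\ln y$ then converts the base value $\widetilde{\mathcal{E}}(0)\lesssim 2+M$ (from \eqref{c2}) into the double-exponential bound $(2+M)^{\exp\{2D_2(1+C_0)^6\}}$, the exponent $(1+C_0)^6$ being the accumulated Gronwall coefficient, which stays \emph{uniform in $T$} precisely because the large-gradient times are measure-limited and $R(t)$ is time-integrable. The lower bound \eqref{lam} with $D\ge D_2$ enters to ensure that the $\frac{1}{2\mu+\lambda}$-weighted contributions are small enough to be absorbed.

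I expect the \emph{main obstacle} to be the simultaneous control of the two competing nonlinearities inside the convection term: invoking \eqref{E5} for $\|\nabla\mathbf{u}\|_{L^4}$ reintroduces $\|\sqrt\rho\dot{\mathbf{u}}\|_{L^2}$ (which must be genuinely absorbed, not merely bounded) and the $\lambda$-weighted pressure (which must be rendered time-integrable via \eqref{3.1}), while Lemma \ref{log} for $\|\sqrt\rho\mathbf{u}\|_{L^4}$ injects the logarithm forcing the double-exponential growth. Balancing these against the available dissipation, and simultaneously dominating the magnetic cross terms $\mathbf{B}\cdot\nabla\mathbf{B}$, $\mathbf{u}\cdot\nabla\mathbf{B}$, $\mathbf{B}\divv\mathbf{u}$ by $\|\mathbf{B}_t\|_{L^2}^2+\|\nabla^2\mathbf{B}\|_{L^2}^2$ after $\|\mathbf{B}\|_{L^\infty}$-type interpolation, is the delicate bookkeeping on which the whole estimate turns.
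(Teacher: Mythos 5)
Your overall strategy coincides with the paper's: a modified $H^1$-energy functional, the effective viscous flux bound \eqref{E5} with $p=4$ together with the Desjardins inequality of Lemma \ref{log} for the convection term, the case split $\|\nabla\mathbf{u}\|_{L^2}\le 1$ versus $\|\nabla\mathbf{u}\|_{L^2}\ge 1$, and a logarithmic Gronwall argument whose coefficient is integrable in time uniformly in $T$ thanks to the energy bound \eqref{3.3} and the hypothesis \eqref{3.1}; your preliminary bound $\|\mathbf{u}\|_{L^2}^2\lesssim C_0(1+\|\nabla\mathbf{u}\|_{L^2}^2)$ is the paper's \eqref{3.12}. However, there is a genuine gap at the magnetic forcing term. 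You test the momentum equation with $\dot{\mathbf{u}}$ and list $\int\dot{\mathbf{u}}\cdot\big(\mathbf{B}\cdot\nabla\mathbf{B}-\tfrac12\nabla|\mathbf{B}|^2\big)\mathrm{d}\mathbf{x}$ among the terms to be handled, but never say how. With vacuum allowed, this term cannot be absorbed by Cauchy--Schwarz against $\int\rho|\dot{\mathbf{u}}|^2\mathrm{d}\mathbf{x}$: there is no pointwise control of $\dot{\mathbf{u}}$ by $\sqrt{\rho}\,\dot{\mathbf{u}}$ on the vacuum set, and the substitute $\|\dot{\mathbf{u}}\|_{L^2}^2\le C\|\sqrt{\rho}\dot{\mathbf{u}}\|_{L^2}^2+CC_0\|\nabla\dot{\mathbf{u}}\|_{L^2}^2$ involves $\|\nabla\dot{\mathbf{u}}\|_{L^2}$, which is not available at this stage (it only appears, with a time weight, in Lemma \ref{l3.4}). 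The workable route is the paper's: test with $\mathbf{u}_t$ and integrate by parts in time, so that the Lorentz force becomes the exact derivative $\frac{\mathrm{d}}{\mathrm{d}t}\int\big(\mathbf{B}\cdot\nabla\mathbf{u}\cdot\mathbf{B}-\tfrac12|\mathbf{B}|^2\divv\mathbf{u}\big)\mathrm{d}\mathbf{x}$ plus terms involving $\mathbf{B}_t$ (the paper's $\mathcal{I}_3$ in \eqref{3.10}), while the convection term $\int\rho\dot{\mathbf{u}}\cdot(\mathbf{u}\cdot\nabla)\mathbf{u}\,\mathrm{d}\mathbf{x}$ reappears on the right and is treated exactly as you propose.

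This fix creates a second omission in your plan: once the cross terms $\int\big(2\mathbf{B}\cdot\nabla\mathbf{u}\cdot\mathbf{B}-|\mathbf{B}|^2\divv\mathbf{u}\big)\mathrm{d}\mathbf{x}$ sit inside the functional, coercivity (the paper's \eqref{3.24}) requires adding $\|\mathbf{B}\|_{L^4}^4$ to it, because these cross terms are only controlled via $|\cdot|\le C\|\mathbf{B}\|_{L^4}^4+\tfrac{\mu}{8}\|\nabla\mathbf{u}\|_{L^2}^2$ as in \eqref{3.20}. That in turn demands a separate evolution estimate for $\|\mathbf{B}\|_{L^4}^4$ --- the paper's \eqref{3.19}, obtained by multiplying $\eqref{a1}_3$ by $4|\mathbf{B}|^2\mathbf{B}$ --- which also supplies the factor $\|\mathbf{B}\|_{L^4}^4$ entering the Case-2 Gronwall coefficient \eqref{3.16}. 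This ingredient is entirely absent from your proposal, and without it the logarithmic Gronwall argument does not close. A further, smaller issue: your schematic inequality $\frac{\mathrm{d}}{\mathrm{d}t}\widetilde{\mathcal{E}}\le C\,\widetilde{\mathcal{E}}\ln\widetilde{\mathcal{E}}+R(t)$ with a \emph{constant} $C$ would only yield a bound growing doubly exponentially in $T$; what is actually needed (and what the paper proves in \eqref{3.25}--\eqref{3.26}) is $f_1'\le Cg_1(t)f_1\ln f_1$ with $g_1\in L^1(0,T)$ uniformly in $T$, the integrability coming from \eqref{3.3} and \eqref{3.1} --- your ``measure-limited large-gradient times'' remark gestures at this, but the time-integrable coefficient must multiply the whole term $f_1\ln f_1$, not just the remainder $R$.
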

\begin{proof}
Owing to $\eqref{a1}_1$, we rewrite $\eqref{a1}_2$ as
\begin{equation}\label{3.7}
\rho\mathbf{u}_t+\rho\mathbf{u}\cdot\nabla\mathbf{u}-\mu\Delta\mathbf{u}-(\mu+\lambda)\nabla\divv \mathbf{u}+\nabla P=\mathbf{B}\cdot\nabla\mathbf{B}-\frac12\nabla|\mathbf{B}|^2.
\end{equation}
Multiplying \eqref{3.7} by $\mathbf{u}_t$ and integration by parts, we obtain that
\begin{align}\label{3.8}
&\frac{1}{2}\frac{\mathrm{d}}{\mathrm{d}t}\int\left[\mu|\nabla \mathbf{u}|^2+(\mu+\lambda)(\divv \mathbf{u})^2
-|\mathbf{B}|^2\divv
\mathbf{u}+2\mathbf{B}\cdot\nabla\mathbf{u}\cdot\mathbf{B}\right]\mathrm{d}\mathbf{x}+\int\rho |\dot{\mathbf{u}}|^2\mathrm{d}\mathbf{x}\notag\\
&=-\int\mathbf{u}_t\cdot\nabla P\mathrm{d}\mathbf{x}+\int\rho \dot{\mathbf{u}}\cdot(\mathbf{u}\cdot\nabla)\mathbf{u}\mathrm{d}\mathbf{x}
+\int\big(\mathbf{B}_t\cdot\nabla\mathbf{u}\cdot\mathbf{B}
+\mathbf{B}\cdot\nabla\mathbf{u}\cdot\mathbf{B}_t
-\mathbf{B}\cdot\mathbf{B}_t\divv\mathbf{u}\big)\mathrm{d}\mathbf{x}.
\end{align}
Moreover, it follows from $\eqref{a1}_3$ that
\begin{align*}
\int|\mathbf{B}\cdot\nabla\mathbf{u}-\mathbf{u}\cdot\nabla\mathbf{B}
-\mathbf{B}\divv\mathbf{u}|^2\mathrm{d}\mathbf{x}
&=\int|\mathbf{B}_t-\nu\Delta\mathbf{B}|^2\mathrm{d}\mathbf{x}\notag\\
&=\frac{\mathrm{d}}{\mathrm{d}t}\int\nu|\nabla \mathbf{B}|^2\mathrm{d}\mathbf{x}+
\int\big(\nu^2|\nabla^2 \mathbf{B}|^2+|\mathbf{B}_t|^2\big)\mathrm{d}\mathbf{x},
\end{align*}
which combined with \eqref{3.8} yields that
\begin{align}\label{3.10}
&\frac{1}{2}\frac{\mathrm{d}}{\mathrm{d}t}\int\left[\mu|\nabla \mathbf{u}|^2+(\mu+\lambda)(\divv \mathbf{u})^2+2\nu|\nabla \mathbf{B}|^2
-|\mathbf{B}|^2\divv\mathbf{u}+2\mathbf{B}\cdot\nabla\mathbf{u}\cdot\mathbf{B}\right]
\mathrm{d}\mathbf{x}
\notag\\&\quad+\int\big(\rho |\dot{\mathbf{u}}|^2+\nu^2|\nabla^2 \mathbf{B}|^2+|\mathbf{B}_t|^2\big)\mathrm{d}\mathbf{x}\notag\\
 &=-\int\mathbf{u}_t\cdot\nabla P\mathrm{d}\mathbf{x}+\int\rho \dot{\mathbf{u}}\cdot(\mathbf{u}\cdot\nabla)\mathbf{u}\mathrm{d}\mathbf{x}
 +\int(\mathbf{B}_t\cdot\nabla\mathbf{u}\cdot\mathbf{B}+\mathbf{B}
 \cdot\nabla\mathbf{u}\cdot\mathbf{B}_t
-\mathbf{B}\cdot\mathbf{B}_t\divv\mathbf{u})\mathrm{d}\mathbf{x}
 \notag\\
&\quad+\int|\mathbf{B}\cdot\nabla\mathbf{u}-\mathbf{u}\cdot\nabla\mathbf{B}
-\mathbf{B}\divv\mathbf{u}|^2\mathrm{d}\mathbf{x}
\triangleq\sum_{i=1}^{4}\mathcal{I}_i.
\end{align}

Next, one needs to bound each $\mathcal{I}_i$. Note that
\begin{align}\label{3.11}
\tilde{\rho}\int|\mathbf{u}|^2\mathrm{d}\mathbf{x}
&=\int\rho|\mathbf{u}|^2\mathrm{d}\mathbf{x}
+\int(\tilde{\rho}-\rho)|\mathbf{u}|^2\mathrm{d}\mathbf{x}\notag\\
&\leq\|\sqrt{\rho}\mathbf{u}\|_{L^2}^2
+\|\rho-\tilde{\rho}\|_{L^2}\|\mathbf{u}\|_{L^4}^2\notag\\
&\leq\|\sqrt{\rho}\mathbf{u}\|_{L^2}^2
+C\|\rho-\tilde{\rho}\|_{L^2}\|\mathbf{u}\|_{L^2}\|\nabla\mathbf{u}\|_{L^2}\notag
\\&\leq2C_0
+CC_0^\frac{1}{2}\|\mathbf{u}\|_{L^2}\|\nabla\mathbf{u}\|_{L^2},
\end{align}
due to Gagliardo--Nirenberg inequality, Lemma $\ref{l3.1}$, and \eqref{1.10}.
This together with the fact $\tilde{\rho}>0$ and Cauchy--Schwarz inequality shows that
\begin{equation}\label{3.12}
\|\mathbf{u}\|_{L^2}^2 \leq C(\tilde{\rho},\hat{\rho})
C_0\big(1+\|\nabla\mathbf{u}\|_{L^2}^2\big),
\end{equation}
and furthermore,
\begin{equation}\label{3.13}
\|\mathbf{u}\|_{H^1}\leq C(\tilde{\rho},\hat{\rho})(1+C_0)^\frac12(1
+\|\nabla\mathbf{u}\|_{L^2}).
\end{equation}

According to $\eqref{a1}_1$ and the definition of effective viscous flux in \eqref{1.6}, one gets that
\begin{align}\label{3.14}
&\mathcal{I}_1=\int (P-P(\tilde{\rho}))\divv\mathbf{u}_t\mathrm{d}\mathbf{x}\notag\\
&=\frac{\mathrm{d}}{\mathrm{d}t}\int(P-P(\tilde{\rho}))\divv\mathbf{u} \mathrm{d}\mathbf{x}+\int\divv\mathbf{u}P'(\rho)\rho\divv\mathbf{u}
\mathrm{d}\mathbf{x}+\int\divv\mathbf{u}P'(\rho)\mathbf{u}\cdot\nabla\rho\mathrm{d}\mathbf{x}\notag\\
&=\frac{\mathrm{d}}{\mathrm{d}t}\int(P-P(\tilde{\rho}))\divv\mathbf{u}\mathrm{d}\mathbf{x}+
\int(\divv\mathbf{u})^2P'(\rho)\rho\mathrm{d}\mathbf{x}+
\int\mathbf{u}\cdot\nabla(P-P(\tilde{\rho}))\divv\mathbf{u}\mathrm{d}\mathbf{x}\notag\\
&=\frac{\mathrm{d}}{\mathrm{d}t}\int(P-P(\tilde{\rho}))\divv\mathbf{u}\mathrm{d}\mathbf{x}+
\int(\divv\mathbf{u})^2(P'(\rho)\rho-P+P(\tilde{\rho}))\mathrm{d}\mathbf{x}-
\int(P-P(\tilde{\rho}))\mathbf{u}\cdot\nabla\divv\mathbf{u}\mathrm{d}\mathbf{x}\notag\\
&=\frac{\mathrm{d}}{\mathrm{d}t}\int(P-P(\tilde{\rho}))\divv\mathbf{u}\mathrm{d}\mathbf{x}+
\int(\divv\mathbf{u})^2(P'(\rho)\rho-P+P(\tilde{\rho}))\mathrm{d}\mathbf{x}\notag\\
&\quad-\frac{1}{2\mu+\lambda}
\int(P-P(\tilde{\rho}))\mathbf{u}\cdot\nabla\Big(F+P-P(\tilde{\rho})+\frac12|\mathbf{B}|^2\Big)\mathrm{d}\mathbf{x}\notag\\
&=\frac{\mathrm{d}}{\mathrm{d}t}\int(P-P(\tilde{\rho}))\divv\mathbf{u}\mathrm{d}\mathbf{x}
+\int(\divv\mathbf{u})^2(P'(\rho)\rho-P+P(\tilde{\rho}))\mathrm{d}\mathbf{x}
+\frac{1}{4\mu+2\lambda}
\int(P-P(\tilde{\rho}))^2\divv\mathbf{u}\mathrm{d}\mathbf{x}\notag\\
&\quad
-\frac{1}{2\mu+\lambda}
\int (P-P(\tilde{\rho}))\mathbf{u}\cdot\nabla F\mathrm{d}\mathbf{x}
-\frac{1}{2\mu+\lambda}
\int (P-P(\tilde{\rho}))\mathbf{u}\cdot\nabla \mathbf{B}\cdot\mathbf{B}\mathrm{d}\mathbf{x}
\notag\\
&\leq\frac{\mathrm{d}}{\mathrm{d}t}\int(P-P(\tilde{\rho}))
\divv\mathbf{u}\mathrm{d}\mathbf{x}
+\frac{1}{8}\|\sqrt{\rho}\dot{\mathbf{u}}\|_{L^2}^2
+\frac{\nu^2}{8}\|\nabla^2\mathbf{B}\|_{L^2}^2
+C\|\nabla\mathbf{B}\|_{L^2}^2\big(1+\|\nabla\mathbf{B}\|_{L^2}^2\big)
\notag\\
&\quad
+C(1+C_0)^2\|\nabla\mathbf{u}\|_{L^{2}}^{2}
\big(1+\|\nabla\mathbf{u}\|_{L^{2}}^{2}\big)
+\frac{C}{(2\mu+\lambda)^2}\|P-P(\tilde{\rho})\|_{L^4}^4,
\end{align}
where we have used \eqref{3.12} and the following estimate
\begin{align*}
&\frac{1}{2\mu+\lambda}\left|
\int(P-P(\tilde{\rho}))\mathbf{u}\cdot\nabla F\mathrm{d}\mathbf{x}\right|
+\frac{1}{2\mu+\lambda}\left|
\int(P-P(\tilde{\rho}))\mathbf{u}\cdot\nabla \mathbf{B}\cdot\mathbf{B}\mathrm{d}\mathbf{x}\right|\notag\\
&\leq\frac{C}{2\mu+\lambda}\|P-P(\tilde{\rho})\|_{L^\infty}
(\|\mathbf{u}\|_{L^2}
\|\nabla F\|_{L^2}+\|\mathbf{u}\|_{L^4}\|\nabla \mathbf{B}\|_{L^4}\|\mathbf{B}\|_{L^2})\notag\\
&\leq\frac{C(\tilde{\rho},\hat{\rho})C_0^\frac12}{2\mu+\lambda}
\Big[(1+\|\nabla\mathbf{u}\|_{L^2}^2)^\frac12
\big(\|\sqrt{\rho}\dot{\mathbf{u}}\|_{L^2}+\|\mathbf{B}\cdot\nabla\mathbf{B}
\|_{L^2}\big)
+\|\mathbf{u}\|_{L^2}^\frac12
\|\nabla\mathbf{u}\|_{L^2}^\frac12\|\nabla\mathbf{B}\|_{L^2}^\frac12
\|\nabla^2\mathbf{B}\|_{L^2}^\frac12\Big]
\notag\\
&\leq\frac{CC_0^\frac12}{2\mu+\lambda}
\Big[\big(1+\|\nabla\mathbf{u}\|_{L^2}^2\big)^\frac12
\Big(\|\sqrt{\rho}\dot{\mathbf{u}}\|_{L^2}+C_0^\frac14\|\nabla\mathbf{B}\|_{L^2}
\|\nabla^2\mathbf{B}\|_{L^2}^\frac12\Big)
+C_0^\frac14\big(1+\|\nabla\mathbf{u}\|_{L^2}^2\big)^\frac12
\|\nabla\mathbf{B}\|_{L^2}^\frac12\|\nabla^2\mathbf{B}\|_{L^2}^\frac12\Big]
\notag\\
&\leq\frac{1}{8}\|\sqrt{\rho}\dot{\mathbf{u}}\|_{L^2}^2
+\frac{\nu^2}{8}\|\nabla^2\mathbf{B}\|_{L^2}^2
+C\|\nabla\mathbf{B}\|_{L^2}^2\big(1+\|\nabla\mathbf{B}\|_{L^2}^2\big)
+CC_0^\frac32\big(1+\|\nabla\mathbf{u}\|_{L^{2}}^{2}\big).
\end{align*}

Next, we consider $\mathcal{I}_2$ in two cases.

\textbf{Case 1:} If $\|\nabla\mathbf{u}\|_{L^{2}}\leq1$, \eqref{3.13} simplifies to
\begin{equation*}
\|\mathbf{u}\|_{H^1}\leq C(\tilde{\rho},\hat{\rho})(1+C_0)^\frac12.
\end{equation*}
Then it follows from \eqref{3.1}, Lemma $\ref{E0}$, Cauchy--Schwarz inequality, Gagliardo--Nirenberg inequality, and H\"older's inequality that
\begin{align}\label{3.15}
&\mathcal{I}_2=\int\rho \dot{\mathbf{u}}\cdot(\mathbf{u}\cdot\nabla)\mathbf{u}\mathrm{d}\mathbf{x}\leq C\|\sqrt{\rho}\dot{\mathbf{u}}\|_{L^2}\|\mathbf{u}\|_{L^{4}}\|\nabla\mathbf{u}\|_{L^{4}}\notag\\
&\leq C\|\sqrt{\rho}\dot{\mathbf{u}}\|_{L^2}\|\mathbf{u}\|_{L^{2}}^{\frac{1}{2}}
\|\nabla\mathbf{u}\|_{L^{2}}^{\frac{1}{2}}\Big[\Big(\|\sqrt{\rho}\dot{\mathbf{u}}\|_{L^2}
^{\frac{1}{2}}+\|\mathbf{B}\cdot\nabla\mathbf{B}\|_{L^2}^{\frac{1}{2}}\Big)
\|\nabla\mathbf{u}\|_{L^2}^\frac{1}{2}
+\frac{1}{2\mu+\lambda}\|\mathbf{B}\|_{L^{8}}^2+\frac{1}{2\mu+\lambda}\|P-P(\tilde{\rho})\|_{L^4}\notag\\
&\quad+
\frac{1}{2\mu+\lambda}\Big(\|\sqrt{\rho}\dot{\mathbf{u}}\|_{L^2}^{\frac{1}{2}}
+\|\mathbf{B}\cdot\nabla\mathbf{B}\|_{L^2}^{\frac{1}{2}}\Big)
\Big(\|P-P(\tilde{\rho})\|_{L^2}^{\frac{1}{2}}+\|\mathbf{B}\|_{L^4}\Big)\Big]\notag\\
&\leq \frac{1}{8}\|\sqrt{\rho}\dot{\mathbf{u}}\|_{L^2}^2
+\frac{\nu^2}{8}\|\nabla^2\mathbf{B}\|_{L^2}^2
+C\|\nabla\mathbf{B}\|_{L^2}^4+C(1+C_0)^5\big(1+\|\nabla \mathbf{u}\|_{L^2}^2\big)
+\frac{C}{(2\mu+\lambda)^4}\|P-P(\tilde{\rho})\|_{L^4}^4,
\end{align}
where in the last inequality we have used
\begin{align*}
\|\mathbf{B}\|_{L^{8}}^2\leq\|\mathbf{B}\|_{L^{\infty}}\|\mathbf{B}\|_{L^{4}}
\leq C\|\mathbf{B}\|_{L^{4}}^\frac32\|\nabla\mathbf{B}\|_{L^{4}}^\frac12
\leq C\|\mathbf{B}\|_{L^{2}}^\frac34\|\nabla\mathbf{B}\|_{L^{2}}\|\nabla^2\mathbf{B}\|_{L^{2}}^\frac14.
\end{align*}

\textbf{Case 2:} If $\|\nabla\mathbf{u}\|_{L^{2}}\ge1$, noting that
\begin{equation*}
\ln(2+bc)\le b\ln(2+c),~\mathrm{for}~b,c\ge1,
\end{equation*}
then one deduces from Lemmas $\ref{log}$, $\ref{E0}$, $\ref{l3.1}$, and \eqref{3.13} that
\begin{align}\label{3.16}
&\mathcal{I}_2=\int\rho \dot{\mathbf{u}}\cdot(\mathbf{u}\cdot\nabla)\mathbf{u}\mathrm{d}\mathbf{x}\leq C\|\sqrt{\rho}\dot{\mathbf{u}}\|_{L^2}\|\sqrt{\rho}\mathbf{u}\|_{L^{4}}
\|\nabla\mathbf{u}\|_{L^{4}}\notag\\
&\leq C\|\sqrt{\rho}\dot{\mathbf{u}}\|_{L^2}
(1+\|\sqrt\rho\mathbf{u}\|_{L^2})^\frac12\|\mathbf{u}\|_{H^1}^\frac12
\ln^\frac14\left(2+\|\mathbf{u}\|_{H^1}^2\right)
\Big[\Big(\|\sqrt{\rho}\dot{\mathbf{u}}\|_{L^2}^{\frac{1}{2}}
+\|\mathbf{B}\cdot\nabla\mathbf{B}\|_{L^2}^{\frac{1}{2}}\Big)
\|\nabla\mathbf{u}\|_{L^2}^\frac{1}{2}\notag\\
&\quad
+\frac{1}{2\mu+\lambda}\|\mathbf{B}\|_{L^{8}}^2
+\frac{1}{2\mu+\lambda}\|P-P(\tilde{\rho})\|_{L^4}+
\frac{1}{2\mu+\lambda}\Big(\|\sqrt{\rho}\dot{\mathbf{u}}\|_{L^2}^{\frac{1}{2}}
+\|\mathbf{B}\cdot\nabla\mathbf{B}\|_{L^2}^{\frac{1}{2}}\Big)
\Big(\|P-P(\tilde{\rho})\|_{L^2}^{\frac{1}{2}}+\|\mathbf{B}\|_{L^4}\Big)\Big]\notag\\
&\leq C\|\sqrt{\rho}\dot{\mathbf{u}}\|_{L^2}\Big(1+C_0^{\frac{1}{2}}\Big)^{\frac{1}{2}}(1+C_0)^{\frac14}
(1+\|\nabla\mathbf{u}\|_{L^{2}})^{\frac{1}{2}}(1+C_0)^{\frac14}\ln^{\frac{1}{4}}\left(2+\|\nabla\mathbf{u}\|_{L^2}^2\right)
\Big[\|\sqrt{\rho}\dot{\mathbf{u}}\|_{L^2}^{\frac{1}{2}}
\|\nabla\mathbf{u}\|_{L^2}^\frac{1}{2}\notag\\
&\quad+C_0^\frac18\|\nabla\mathbf{B}\|_{L^2}^\frac12\|\nabla^2\mathbf{B}\|_{L^2}^\frac14\|\nabla\mathbf{u}\|_{L^2}^\frac12
+\frac{1}{2\mu+\lambda}C_0^\frac18\|\mathbf{B}\|_{L^{4}}\|\nabla\mathbf{B}\|_{L^{2}}^\frac12\|\nabla^2\mathbf{B}\|_{L^{2}}^\frac14
+\frac{1}{2\mu+\lambda}\|P-P(\tilde{\rho})\|_{L^4}\notag\\
&\quad
+\frac{1}{2\mu+\lambda}\Big(\|\sqrt{\rho}\dot{\mathbf{u}}\|_{L^2}^{\frac{1}{2}}
+C_0^\frac18\|\nabla\mathbf{B}\|_{L^2}^\frac12
\|\nabla^2\mathbf{B}\|_{L^2}^\frac14\Big)
\Big(C_0^{\frac14}+\|\mathbf{B}\|_{L^4}\Big)
\Big]\notag\\
&\leq \frac{1}{8}\|\sqrt{\rho}\dot{\mathbf{u}}\|_{L^2}^2
+\frac{\nu^2}{8}\|\nabla^2\mathbf{B}\|_{L^{2}}^2
+C\|\nabla\mathbf{B}\|_{L^2}^2
\big(1+\|\nabla\mathbf{B}\|_{L^2}^2+\|\mathbf{B}\|_{L^{4}}^4\big)
+\frac{C}{(2\mu+\lambda)^4}\|P-P(\tilde{\rho})\|_{L^4}^4\notag\\
&\quad+
C(1+C_0)^5
\big(\|\nabla\mathbf{u}\|_{L^{2}}^2+\|\nabla\mathbf{B}\|_{L^2}^2\big)
\big(1+\|\nabla\mathbf{u}\|_{L^{2}}^2+\|\mathbf{B}\|_{L^{4}}^4\big)
\ln\left(2+\|\nabla\mathbf{u}\|_{L^2}^2\right),
\end{align}
where in the third inequality one has used
\begin{equation*}
\|\mathbf{B}\cdot\nabla\mathbf{B}\|_{L^2}\leq\|\mathbf{B}\|_{L^4}\|\nabla\mathbf{B}\|_{L^4}
\leq C\|\mathbf{B}\|_{L^2}^\frac12\|\nabla\mathbf{B}\|_{L^2}\|\nabla^2\mathbf{B}\|_{L^2}^\frac12
\leq CC_0^\frac14\|\nabla\mathbf{B}\|_{L^2}\|\nabla^2\mathbf{B}\|_{L^2}^\frac12.
\end{equation*}

Using Gagliardo--Nirenberg inequality and \eqref{3.12}, one finds that
\begin{align}\label{3.17}
 \mathcal{I}_3&\leq C\|\mathbf{B}\|_{L^\infty}\|\mathbf{B}_t\|_{L^2}\|\nabla\mathbf{u}\|_{L^2}\leq CC_0^\frac18\|\nabla\mathbf{B}\|_{L^2}^\frac12\|\nabla^2\mathbf{B}\|_{L^2}^\frac14\|\mathbf{B}_t\|_{L^2}\|\nabla\mathbf{u}\|_{L^2}\notag\\
 &\leq \frac14\|\mathbf{B}_t\|_{L^2}^2+\frac{\nu^2}{8}\|\nabla^2\mathbf{B}\|_{L^{2}}^2
 +C\|\nabla\mathbf{B}\|_{L^2}^4+CC_0^\frac12\|\nabla\mathbf{u}\|_{L^2}^4.
\end{align}
For $\mathcal{I}_4$, one derives from integration by parts that
\begin{align}\label{3.18}
 \mathcal{I}_4&\leq C\|\mathbf{B}\|_{L^\infty}^2\|\nabla\mathbf{u}\|_{L^2}^2
 +C\int(\mathbf{u}\cdot\nabla)B^j(\mathbf{u}\cdot\nabla)B^j\mathrm{d}\mathbf{x}\notag\\
 &\leq C\big[\|\mathbf{B}\|_{L^\infty}^2\|\nabla\mathbf{u}\|_{L^2}^2
 +\|\mathbf{B}\|_{L^4}\|\nabla\mathbf{B}\|_{L^8}\|\mathbf{u}\|_{L^8}\|\nabla\mathbf{u}\|_{L^2}
 +\|\mathbf{B}\|_{L^4}\|\mathbf{u}\|_{L^8}^2\|\nabla^2\mathbf{B}\|_{L^2}\big]
 \notag\\
 &\leq C C_0^\frac14\Big[\|\nabla\mathbf{B}\|_{L^2}\|\nabla^2\mathbf{B}\|_{L^2}^\frac12\|\nabla\mathbf{u}\|_{L^2}^2
 +\|\mathbf{B}\|_{L^4}\|\nabla\mathbf{u}\|_{L^2}
 \big(1+\|\nabla\mathbf{u}\|_{L^2}^2\big)^\frac12\Big(
 \|\nabla\mathbf{B}\|_{L^2}^\frac14\|\nabla^2\mathbf{B}\|_{L^2}^\frac34
+\|\nabla^2\mathbf{B}\|_{L^2}\Big)\Big]\notag\\
 &\leq C(1+C_0)^2\|\nabla\mathbf{u}\|_{L^2}^2\big(1+\|\nabla\mathbf{u}\|_{L^{2}}^2\big)
 +\frac{\nu^2}{8}\|\nabla^2\mathbf{B}\|_{L^{2}}^2+C\|\nabla\mathbf{B}\|_{L^2}^2
\big(1+\|\nabla\mathbf{u}\|_{L^2}^2+\|\nabla\mathbf{B}\|_{L^2}^2\big).
\end{align}

Thus, substituting \eqref{3.14}--\eqref{3.18} into \eqref{3.10}, one concludes that
\begin{align}\label{3.22}
&\frac{1}{2}\frac{\mathrm{d}}{\mathrm{d}t}\int\left[\mu|\nabla \mathbf{u}|^2+(\mu+\lambda)(\divv \mathbf{u})^2+2\nu|\nabla \mathbf{B}|^2-2(P-P(\tilde{\rho}))\divv\mathbf{u}
-|\mathbf{B}|^2\divv
\mathbf{u}+2\mathbf{B}\cdot\nabla\mathbf{u}\cdot\mathbf{B}\right]
\mathrm{d}\mathbf{x}
\notag\\&\quad+\int\big(\rho |\dot{\mathbf{u}}|^2+\nu^2|\nabla^2 \mathbf{B}|^2+|\mathbf{B}_t|^2\big)\mathrm{d}\mathbf{x}\notag\\
&\leq
C(1+C_0)^5
\big(\|\nabla\mathbf{u}\|_{L^{2}}^2+\|\nabla\mathbf{B}\|_{L^2}^2\big)
\big(1+\|\nabla\mathbf{u}\|_{L^{2}}^2+\|\mathbf{B}\|_{L^{4}}^4\big)
\ln\left(2+\|\nabla\mathbf{u}\|_{L^2}^2\right)
\notag\\
&\quad
+C\|\nabla\mathbf{B}\|_{L^2}^2\big(1+\|\nabla\mathbf{B}\|_{L^2}^2\big)
+\frac{C}{(2\mu+\lambda)^2}\|P-P(\tilde{\rho})\|_{L^4}^4,
\end{align}
where we observe that
\begin{equation}\label{3.20}
 \bigg|\int(|\mathbf{B}|^2\divv
\mathbf{u}-2\mathbf{B}\cdot\nabla\mathbf{u}\cdot\mathbf{B})\mathrm{d}\mathbf{x}\bigg|\leq C\|\mathbf{B}\|_{L^4}^2\|\nabla\mathbf{u}\|_{L^2}
  \leq C\|\mathbf{B}\|_{L^4}^4+ \frac{\mu}{8}\|\nabla\mathbf{u}\|_{L^2}^2.
\end{equation}
Moreover, multiplying $\eqref{a1}_3$ by $4|\mathbf{B}|^2\mathbf{B}$ and integrating the resultant over $\mathbb{R}^2$ yields that
\begin{align}\label{3.19}
\frac{\mathrm{d}}{\mathrm{d}t}\|\mathbf{B}\|_{L^4}^4
+4\nu\||\mathbf{B}||\nabla\mathbf{B}|\|_{L^2}^2
&\leq -2\nu\|\nabla|\mathbf{B}|^2\|_{L^2}^2+C\|\nabla\mathbf{u}\|_{L^2}\||\mathbf{B}|^2\|_{L^4}^2 \notag \\
 & \leq -2\nu\|\nabla|\mathbf{B}|^2\|_{L^2}^2 +C\|\nabla\mathbf{u}\|_{L^2}\||\mathbf{B}|^2\|_{L^2}\|\nabla|\mathbf{B}|^2\|_{L^2}\notag\\
  &\leq
  -\nu\|\nabla|\mathbf{B}|^2\|_{L^2}^2
  +C\|\mathbf{B}\|_{L^4}^4\|\nabla\mathbf{u}\|_{L^2}^2.
\end{align}
Now we define an auxiliary functional as
\begin{align}\label{3.23}
\mathcal{E}_1(t)\triangleq
&\int\left[\mu|\nabla \mathbf{u}|^2+(\mu+\lambda)(\divv \mathbf{u})^2+2\nu|\nabla \mathbf{B}|^2-2(P-P(\tilde{\rho}))\divv\mathbf{u}
-|\mathbf{B}|^2\divv
\mathbf{u}+2\mathbf{B}\cdot\nabla\mathbf{u}\cdot\mathbf{B}\right]\mathrm{d}\mathbf{x}
\notag\\
&+
\int\bigg(\frac{1}{2}\rho |\mathbf{u}|^2+\frac{1}{2} |\mathbf{B}|^2+G(\rho)\bigg)\mathrm{d}\mathbf{x},
\end{align}
which along with \eqref{3.1} and \eqref{3.20} indicates that there exists $D_1=D_1(\tilde{\rho},\hat{\rho})>0$ such that
\begin{align}\label{3.24}
\mathcal{E}_1(t)\thicksim\int\bigg(\frac{1}{2}\rho |\mathbf{u}|^2+\frac{1}{2} |\mathbf{B}|^2+G(\rho)
+\mu|\nabla \mathbf{u}|^2+(\mu+\lambda)(\divv \mathbf{u})^2+2\nu|\nabla \mathbf{B}|^2+|\mathbf{B}|^4\bigg)\mathrm{d}\mathbf{x}
\end{align}
provided $\lambda\ge D_1$.

Setting
\begin{align}\label{3.25}
f_1(t)\triangleq2+\mathcal{E}_1(t),~~g_1(t)\triangleq (1+C_0)^5\big(\|\nabla\mathbf{u}\|_{L^2}^2+\|\nabla \mathbf{B}\|_{L^2}^2\big)
+\frac{1}{(2\mu+\lambda)^2}\|P-P(\tilde{\rho})\|_{L^4}^4
\end{align}
and adding \eqref{3.5} into \eqref{3.22}, we then deduce from \eqref{3.19} and \eqref{3.24} that
\begin{equation*}
    f'_{1}(t)\leq Cg_{1}(t)f_{1}(t)\ln f_{1}(t),
\end{equation*}
and hence,
\begin{equation}\label{3.26}
    \big(\ln f_1(t)\big)'\leq Cg_1(t)\ln f_1(t).
\end{equation}
From \eqref{3.3} and Gronwall's inequality, there is a positive constant $D_2=D_2(\tilde{\rho},\hat{\rho},a,\gamma,\nu,\mu)\geq D_1$ such that
\begin{equation}\label{3.27}
    \sup_{0\leq t\leq T}\int\left[\mu|\nabla \mathbf{u}|^2+(\mu+\lambda)(\divv \mathbf{u})^2+\nu|\nabla \mathbf{B}|^2+|\mathbf{B}|^4\right]\mathrm{d}\mathbf{x}\leq
(2+M)^{\exp\big\{D_2(1+C_0)^6\big\}}
\end{equation}
provided $\lambda\ge D_2$.
Integrating \eqref{3.22} with respect to $t$ over $(0,T)$ together with \eqref{3.3} and \eqref{3.27} yields that
\begin{align*}
&\int_0^T\big(\|\sqrt{\rho}{\dot{\mathbf{u}}}\|_{L^2}^2+\|\nabla^2 \mathbf{B}\|_{L^2}^2+
\|\mathbf{B}_t\|_{L^2}^2\big)
\mathrm{d}t\notag\\
&\leq C(1+M)+C(1+C_0)^5C_0(2+M)^{\exp\big\{D_2(1+C_0)^6\big\}}
\ln\bigg\{(2+M)^{\exp\big\{\frac{3}{2}D_2(1+C_0)^6\big\}}\bigg\}\notag\\
&\leq(2+M)^{\exp\big\{\frac{7}{4}D_2(1+C_0)^6\big\}}
\end{align*}
provided that $\lambda$ satisfies \eqref{lam} with $D\geq D_2$, which along with \eqref{3.27} implies the desired \eqref{3.6}.
\end{proof}

Next, we give the bound of $\frac{1}{2\mu+\lambda}\int_{0}^{T}\|P-P(\tilde{\rho})\|_{L^4}^4\mathrm{d}t$.
\begin{lemma}\label{l3.3}
Let \eqref{3.1} be satisfied, then it holds that
\begin{align}\label{3.29}
\frac{1}{2\mu+\lambda}\int_{0}^{T}\|P-P(\tilde{\rho})\|_{L^4}^4\mathrm{d}t\le
(2+M)^{\exp\big\{3D_2(1+C_0)^6\big\}}
\end{align}
provided that $\lambda$ satisfies \eqref{lam} with $D\geq 2D_2$.
\end{lemma}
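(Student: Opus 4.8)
The plan is to convert the damping hidden in the \emph{effective viscous flux} into an $L^4$ bound on the pressure. Set $q\triangleq P(\rho)-P(\tilde\rho)$. From $\eqref{a1}_1$ and $P=a\rho^\gamma$ one gets the transport identity $q_t+\mathbf u\cdot\nabla q+\gamma P\divv\mathbf u=0$, and substituting $\divv\mathbf u=(2\mu+\lambda)^{-1}\big(F+q+\frac12|\mathbf B|^2\big)$ from \eqref{1.6} yields
\[
q_t+\mathbf u\cdot\nabla q=-\frac{\gamma P}{2\mu+\lambda}\Big(F+q+\tfrac12|\mathbf B|^2\Big).
\]
First I would multiply this by $|q|^2q$ and integrate over $\mathbb R^2$, treating the convective term by $\int\mathbf u\cdot\nabla|q|^4\,\mathrm d\mathbf x=-\int|q|^4\divv\mathbf u\,\mathrm d\mathbf x$ and substituting $\divv\mathbf u$ a second time.

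The decisive step is the sign bookkeeping of the two terms that are quartic in $q$. Collecting them produces the coefficient $\big(\tfrac14-\gamma\big)$ on $\int P|q|^4$ and $-\tfrac14P(\tilde\rho)$ on $\int|q|^4$; since $\gamma>1$ both are negative, so after discarding the nonnegative multiple of $\int P|q|^4$ I retain the genuine damping
\[
\frac14\frac{\mathrm d}{\mathrm dt}\|q\|_{L^4}^4+\frac{P(\tilde\rho)}{4(2\mu+\lambda)}\|q\|_{L^4}^4\le\frac{C}{2\mu+\lambda}\int|q|^3\big(|F|+|\mathbf B|^2\big)\,\mathrm d\mathbf x,
\]
where I have used that \eqref{3.1} gives $0\le\rho\le2\hat\rho$, hence $\|q\|_{L^\infty}\le C(\hat\rho)$ and $0\le P\le C(\hat\rho)$. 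Here $P(\tilde\rho)=a\tilde\rho^\gamma>0$ is exactly the damping rate, which is where the non-vacuum far field enters. By H\"older the right-hand side is $\le\frac{C}{2\mu+\lambda}\|q\|_{L^4}^3(\|F\|_{L^4}+\|\mathbf B\|_{L^8}^2)$, and Young's inequality absorbs a copy of $\frac{P(\tilde\rho)}{8(2\mu+\lambda)}\|q\|_{L^4}^4$ into the left-hand side, leaving $\frac{C}{2\mu+\lambda}(\|F\|_{L^4}^4+\|\mathbf B\|_{L^8}^8)$. Integrating in time and using $\|q(0)\|_{L^4}^4\le\|q(0)\|_{L^\infty}^2\|q(0)\|_{L^2}^2\le CC_0$ (by \eqref{1.10} and $\int G(\rho_0)\le C_0$), I obtain
\[
\frac{1}{2\mu+\lambda}\int_0^T\|q\|_{L^4}^4\,\mathrm dt\le CC_0+\frac{C}{2\mu+\lambda}\int_0^T\big(\|F\|_{L^4}^4+\|\mathbf B\|_{L^8}^8\big)\,\mathrm dt.
\]

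It remains to bound the two time integrals uniformly in $\lambda$. For $F$ I would invoke \eqref{E4} with $p=4$, giving $\|F\|_{L^4}^4\le C(\|\rho\dot{\mathbf u}\|_{L^2}+\|\mathbf B\cdot\nabla\mathbf B\|_{L^2})^2\big[(2\mu+\lambda)\|\divv\mathbf u\|_{L^2}+\|q\|_{L^2}+\|\mathbf B\|_{L^4}^2\big]^2$. The key cancellation is $\big[(2\mu+\lambda)\|\divv\mathbf u\|_{L^2}\big]^2=(2\mu+\lambda)\cdot(2\mu+\lambda)\|\divv\mathbf u\|_{L^2}^2$, where the second factor is controlled by the $L^\infty_t$ part of \eqref{3.6}; thus the prefactor $(2\mu+\lambda)^{-1}$ is consumed, and $\frac{1}{2\mu+\lambda}\|F\|_{L^4}^4$ is bounded by the constant in \eqref{3.6} times $(\|\rho\dot{\mathbf u}\|_{L^2}^2+\|\mathbf B\cdot\nabla\mathbf B\|_{L^2}^2)$, up to terms still carrying a factor $(2\mu+\lambda)^{-1}$ (harmless since $\|q\|_{L^2}^2\le CC_0$ and $\|\mathbf B\|_{L^4}^4$ is bounded). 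Its time integral is then finite by \eqref{3.6}: $\int_0^T\|\rho\dot{\mathbf u}\|_{L^2}^2\le2\hat\rho\int_0^T\|\sqrt\rho\dot{\mathbf u}\|_{L^2}^2$, while for the magnetic part I use $\|\mathbf B\cdot\nabla\mathbf B\|_{L^2}^2\le CC_0^{1/2}\|\nabla\mathbf B\|_{L^2}^2\|\nabla^2\mathbf B\|_{L^2}$ followed by Young to split it into $\sup_t\|\nabla\mathbf B\|_{L^2}^2\int_0^T\|\nabla\mathbf B\|_{L^2}^2$ and $\int_0^T\|\nabla^2\mathbf B\|_{L^2}^2$, both finite by Lemmas \ref{l3.1}--\ref{l3.2}. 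For the last term, Gagliardo--Nirenberg gives $\|\mathbf B\|_{L^8}^8\le C\|\mathbf B\|_{L^2}^2\|\nabla\mathbf B\|_{L^2}^6$, so $\frac{1}{2\mu+\lambda}\int_0^T\|\mathbf B\|_{L^8}^8\le\frac{CC_0}{2\mu+\lambda}\sup_t\|\nabla\mathbf B\|_{L^2}^4\int_0^T\|\nabla\mathbf B\|_{L^2}^2$, again finite. Collecting the estimates, every factor is at most a fixed power of the constant $(2+M)^{\exp\{2D_2(1+C_0)^6\}}$ of \eqref{3.6}; since $\exp\{3D_2(1+C_0)^6\}=\exp\{2D_2(1+C_0)^6\}\exp\{D_2(1+C_0)^6\}$ beats any fixed power once $2+M>1$ and $\lambda$ obeys \eqref{lam} with $D\ge2D_2$, the product fits into $(2+M)^{\exp\{3D_2(1+C_0)^6\}}$, which is \eqref{3.29}.

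The hardest part is precisely the uniform-in-$\lambda$ control of $\frac{1}{2\mu+\lambda}\int_0^T\|F\|_{L^4}^4$: a priori the effective viscous flux carries a full power of $(2\mu+\lambda)$ through the term $(2\mu+\lambda)\divv\mathbf u$, and it is only the energy-type bound on $(2\mu+\lambda)\|\divv\mathbf u\|_{L^2}^2$ from \eqref{3.6}, combined with the time-integrated estimate $\int_0^T\|\sqrt\rho\dot{\mathbf u}\|_{L^2}^2$ of Lemma \ref{l3.2}, that makes the prefactor $(2\mu+\lambda)^{-1}$ cancel rather than blow up. Keeping the magnetic contributions $\|\mathbf B\cdot\nabla\mathbf B\|_{L^2}^2$ and $\|\mathbf B\|_{L^8}^8$ integrable in time without any dependence on $T$ is the accompanying technical difficulty.
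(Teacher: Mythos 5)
Your proof is correct, and its skeleton coincides with the paper's: both start from the pressure transport equation, substitute $\divv\mathbf u=(2\mu+\lambda)^{-1}\big(F+q+\tfrac12|\mathbf B|^2\big)$ with $q\triangleq P-P(\tilde\rho)$ to convert the effective viscous flux into a damping term, and both reduce the problem to bounding $\frac{1}{2\mu+\lambda}\int_0^T\big(\|F\|_{L^4}^4+\|\mathbf B\|_{L^8}^8\big)\mathrm dt$ by exactly the mechanism you describe: the cancellation $(2\mu+\lambda)^{-1}\big[(2\mu+\lambda)\|\divv\mathbf u\|_{L^2}\big]^2=(2\mu+\lambda)\|\divv\mathbf u\|_{L^2}^2$ against the $\sup_t$ part of \eqref{3.6}, together with the time-integrated bounds of Lemmas \ref{l3.1}--\ref{l3.2}. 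The one real difference is the choice of multiplier, and hence the source of the damping constant. The paper multiplies \eqref{3.30} by $3q^2$, so the damping $\frac{3\gamma-1}{2\mu+\lambda}\|q\|_{L^4}^4$ carries the coefficient $3\gamma-1$, independent of $\tilde\rho$; the price is two leftover terms, $\frac{\mathrm d}{\mathrm dt}\int q^3\,\mathrm d\mathbf x$ (sign-indefinite, controlled by $\sup_t\|q\|_{L^3}^3\le CC_0$) and $3\gamma P(\tilde\rho)\int q^2\divv\mathbf u\,\mathrm d\mathbf x$ (Young's inequality, costing $C(2\mu+\lambda)\|\divv\mathbf u\|_{L^2}^2$, which is integrable by \eqref{3.3}). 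You multiply by $q^3$ and keep $\gamma P\divv\mathbf u$ unsplit; the damping rate is then $P(\tilde\rho)/4$ after discarding the nonnegative term $(\gamma-\tfrac14)\int Pq^4\,\mathrm d\mathbf x$, the time-derivative term is sign-definite so only $\|q(0)\|_{L^4}^4\le CC_0$ enters, and no $(2\mu+\lambda)\|\divv\mathbf u\|_{L^2}^2$ leftover appears at all. Your variant is marginally cleaner (it also handles $\|\mathbf B\|_{L^8}^8$ with a single Gagliardo--Nirenberg step using only $\|\nabla\mathbf B\|_{L^2}$), but it genuinely leans on the non-vacuum far field through $P(\tilde\rho)>0$, whereas the paper's coefficient $3\gamma-1$ would survive $\tilde\rho\to0$; under the standing assumption $\tilde\rho>0$ both routes are equally valid, and your final bookkeeping of powers of $(2+M)^{\exp\{2D_2(1+C_0)^6\}}$ being absorbed into $(2+M)^{\exp\{3D_2(1+C_0)^6\}}$ is the same (implicit) argument the paper uses.
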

\begin{proof}
It follows from $\eqref{a1}_1$ and $P(\rho)=a\rho^\gamma$ that
\begin{equation}\label{3.30}
  (P-P(\tilde{\rho}))_t+\mathbf{u}\cdot\nabla(P-P(\tilde{\rho}))+\gamma (P-P(\tilde{\rho}))\divv\mathbf{u}+\gamma P(\tilde{\rho})\divv\mathbf{u}=0.
\end{equation}
Multiplying $\eqref{3.30}$ by $3(P-P(\tilde{\rho}))^2$ and integrating the resulting equality over $\mathbb{R}^2$, we obtain that
\begin{align*}
&\frac{3\gamma-1}{2\mu+\lambda}\left\|P-P(\tilde{\rho})\right\|_{L^4}^4\notag\\
&=-\frac{\mathrm{d}}{\mathrm{d}t}\int(P-P(\tilde{\rho}))^3\mathrm{d}\mathbf{x}
-\frac{3\gamma-1}{2(2\mu+\lambda)}\int(P-P(\tilde{\rho}))^3\big(2F+|\mathbf{B}|^2\big)
\mathrm{d}\mathbf{x}
-3\gamma P(\tilde{\rho})\int(P-P(\tilde{\rho}))^2\divv\mathbf{u}\mathrm{d}\mathbf{x}\notag\\
&\leq-\frac{\mathrm{d}}{\mathrm{d}t}\int(P-P(\tilde{\rho}))^3\mathrm{d}\mathbf{x}+
\frac{3\gamma-1}{4(2\mu+\lambda)}\left\|P-P(\tilde{\rho})\right\|_{L^4}^4
+\frac{C}{2\mu+\lambda}\big(\|F\|_{L^4}^4+\|\mathbf{B}\|_{L^8}^8\big) +C(2\mu+\lambda)\left\|\divv\mathbf{u}\right\|_{L^2}^2\notag.
\end{align*}
Integrating the above inequality over $(0,T)$, we infer from Lemmas $\ref{E0}$, $\ref{l3.1}$, $\ref{l3.2}$, and \eqref{3.27} that
\begin{align*}
&\frac{1}{2\mu+\lambda}\int_{0}^{T}\|P-P(\tilde{\rho})\|_{L^{4}}^{4}\mathrm{d}t\notag\\
&\leq \frac{C}{2\mu+\lambda}\int_0^T\big(\|F\|_{L^2}^2\|\nabla F\|_{L^2}^2+\|\mathbf{B}\|_{L^\infty}^4\|\mathbf{B}\|_{L^4}^4\big)\mathrm{d}t+
C\sup_{0\leq t\leq T}\left\|P-P(\tilde{\rho})\right\|_{L^3}^3+CC_0\notag\\
&\leq \frac{C}{2\mu+\lambda}\int_0^T\left[(2\mu+\lambda)^2
\|\divv\mathbf{u}\|_{L^2}^2+\|P-P(\tilde{\rho})\|_{L^2}^2+\|\mathbf{B}\|_{L^4}^4\right]
\Big[\|\sqrt{\rho}\dot{\mathbf{u}}\|_{L^2}^2
+C_0^\frac12\|\nabla\mathbf{B}\|_{L^2}^2\big(1+\|\nabla^2\mathbf{B}\|_{L^2}^2\big)\Big]\mathrm{d}t
\notag\\
&\quad+\frac{CC_0^\frac12}{2\mu+\lambda}
\int_0^T\big[\|\mathbf{B}\|_{L^4}^4\|\nabla\mathbf{B}\|_{L^2}^2
\big(1+\|\nabla^2\mathbf{B}\|_{L^2}^2\big)\big]\mathrm{d}t
+C(1+C_0)\notag\\
&\leq (2+M)^{\exp\big\{3D_2(1+C_0)^6\big\}},
\end{align*}
as the desired \eqref{3.29}.
\end{proof}

Motivated by \cite{Hoff95,Hoff95*,HWZ}, we establish the time-weighted estimate for $\|\sqrt{\rho}{\dot{\mathbf{u}}}\|_{L^2}^2$ and $\|\nabla^2 \mathbf{B}\|_{L^2}^2$.
\begin{lemma}\label{l3.4}
Let \eqref{3.1} be satisfied, then it holds that
\begin{align}\label{3.32}
&\sup_{0\leq t\leq T}\big[\sigma\big(\|\sqrt{\rho}{\dot{\mathbf{u}}}\|_{L^2}^2+\|\nabla^2 \mathbf{B}\|_{L^2}^2+\|\mathbf{B}_t\|_{L^2}^2\big)\big]
+\int_0^T\sigma\big[(\mu+\lambda)\|\divf \mathbf{u}\|_{L^2}^2
+\mu\|\nabla \dot{\mathbf{u}}\|_{L^2}^2+\nu\|\nabla\mathbf{B}_t\|_{L^2}^2\big]\mathrm{d}t\notag\\
&
\leq \exp\bigg\{(2+M)^{\exp\big\{4D_2(1+C_0)^6\big\}}\bigg\}
\end{align}
provided that $\lambda$ satisfies \eqref{lam} with $D\geq 3D_2$,
where $\divf\mathbf{u}\triangleq\divv\mathbf{u}_t+\mathbf{u}\cdot\nabla\divv\mathbf{u}$.
\end{lemma}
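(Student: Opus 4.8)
The plan is to carry out a time-weighted higher-order energy estimate of Hoff type, with the bulk-viscosity contribution reorganized so as to keep the coefficient $\mu+\lambda$ out of any term that is not controllable uniformly in $\lambda$. First I would write $\eqref{3.7}$ componentwise as $\rho\dot u^j=\mu\Delta u^j+(\mu+\lambda)\partial_j\divv\mathbf{u}-\partial_jP+B^i\partial_iB^j-\tfrac12\partial_j|\mathbf{B}|^2$, apply the operator $\partial_t+\divv(\mathbf{u}\,\cdot)$ to it (which turns the left side into $\rho\ddot u^j$ thanks to $\eqref{a1}_1$), multiply by $\sigma\dot u^j$, and integrate over $\mathbb{R}^2$. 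The Laplacian yields the good term $\mu\sigma\|\nabla\dot{\mathbf{u}}\|_{L^2}^2$, the time derivative produces $\tfrac12\frac{\mathrm{d}}{\mathrm{d}t}\big(\sigma\|\sqrt\rho\dot{\mathbf{u}}\|_{L^2}^2\big)$ together with the harmless borrowed term $\tfrac12\sigma'\|\sqrt\rho\dot{\mathbf{u}}\|_{L^2}^2$ (here $\sigma'\le1$ and $\int_0^T\|\sqrt\rho\dot{\mathbf{u}}\|_{L^2}^2\,\mathrm{d}t$ is already bounded by Lemma~\ref{l3.2}), and the remaining pressure, bulk, and magnetic terms are to be estimated. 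In parallel I would differentiate $\eqref{a1}_3$ in time, test against $\sigma\mathbf{B}_t$, and integrate, producing $\tfrac12\frac{\mathrm{d}}{\mathrm{d}t}\big(\sigma\|\mathbf{B}_t\|_{L^2}^2\big)$ and the good dissipation $\nu\sigma\|\nabla\mathbf{B}_t\|_{L^2}^2$; the bound on $\sigma\|\nabla^2\mathbf{B}\|_{L^2}^2$ is then recovered from standard elliptic regularity applied to $\nu\Delta\mathbf{B}=\mathbf{B}_t-(\mathbf{B}\cdot\nabla\mathbf{u}-\mathbf{u}\cdot\nabla\mathbf{B}-\mathbf{B}\divv\mathbf{u})$.

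The crux, exactly as flagged in the strategy, is the bulk term $(\mu+\lambda)\sigma\int\dot u^j\big(\partial_j\divv\mathbf{u}_t+\divv(\mathbf{u}\,\partial_j\divv\mathbf{u})\big)\,\mathrm{d}\mathbf{x}$. Rather than integrating by parts to generate $(\mu+\lambda)\sigma\|\divv\dot{\mathbf{u}}\|_{L^2}^2$, I would reorganize it so that the coercive quantity $(\mu+\lambda)\sigma\|\divf\mathbf{u}\|_{L^2}^2$ appears on the left, using the algebraic identity $\divf\mathbf{u}=\divv\dot{\mathbf{u}}-u_j^iu_i^j$. The decisive mechanism for keeping $\lambda$ isolated is the flux relation $(2\mu+\lambda)\divf\mathbf{u}=\dot F+(\partial_t+\mathbf{u}\cdot\nabla)\big(P-P(\tilde\rho)+\tfrac12|\mathbf{B}|^2\big)$, obtained by taking the material derivative of $\eqref{1.6}$; since $\tfrac{\mu+\lambda}{2\mu+\lambda}\le1$, every leftover piece carrying a factor $(\mu+\lambda)\divf\mathbf{u}$ can be recast with an $O(1)$ coefficient, converting would-be $\lambda$-growth into harmless material derivatives of $F$, $P$, and $|\mathbf{B}|^2$. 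The quadratic remainder $u_j^iu_i^j$ is lower order in $\nabla\mathbf{u}$, estimated through $\|\nabla\mathbf{u}\|_{L^4}^2$ and hence via $\eqref{E5}$ and the $L^\infty(0,T;L^2)$ bound on $\nabla\mathbf{u}$ from Lemma~\ref{l3.2}.

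For the remaining nonlinearities I would split $\nabla\mathbf{u}=\nabla\mathcal{P}\mathbf{u}+\nabla\mathcal{Q}\mathbf{u}$ by the Leray--Helmholtz projection, so that the divergence-free (``bad'') part $\nabla^2\mathcal{P}\mathbf{u}$ is controlled by $\|\rho\dot{\mathbf{u}}\|_{L^p}+\|\mathbf{B}\cdot\nabla\mathbf{B}\|_{L^p}$ via $\eqref{E2}$, while the compressible part is tied to $F$, the pressure, and $|\mathbf{B}|^2$ through $\eqref{1.6}$; the extra terms thrown off by this decomposition are removed by integration by parts, as indicated in the strategy. The pressure terms all appear with a prefactor $\tfrac{1}{2\mu+\lambda}$ (coming either from $\eqref{E5}$ or from the flux identity), so that the time integral $\tfrac{1}{2\mu+\lambda}\int_0^T\|P-P(\tilde\rho)\|_{L^4}^4\,\mathrm{d}t$ of Lemma~\ref{l3.3} is precisely what makes them summable. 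The magnetic coupling terms $\mathbf{B}\cdot\nabla\mathbf{u}$, $\mathbf{u}\cdot\nabla\mathbf{B}$, $\mathbf{B}\divv\mathbf{u}$ and their time derivatives are handled by H\"older and Gagliardo--Nirenberg interpolation of the type $\|\mathbf{B}\|_{L^\infty}\le C\|\mathbf{B}\|_{L^2}^{1/4}\|\nabla\mathbf{B}\|_{L^2}^{1/2}\|\nabla^2\mathbf{B}\|_{L^2}^{1/4}$, pairing every dangerous factor either with the good dissipation or with the already time-weighted quantities $\sigma\|\nabla^2\mathbf{B}\|_{L^2}^2$ and $\sigma\|\mathbf{B}_t\|_{L^2}^2$.

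Finally I would add the velocity and magnetic inequalities, absorb all good terms into the left, and reach a differential inequality of the form $\frac{\mathrm{d}}{\mathrm{d}t}\Phi+\text{(dissipation)}\le g\,\Phi+h$, where $\Phi=\sigma\big(\|\sqrt\rho\dot{\mathbf{u}}\|_{L^2}^2+\|\nabla^2\mathbf{B}\|_{L^2}^2+\|\mathbf{B}_t\|_{L^2}^2\big)$ and $g,h\in L^1(0,T)$ with norms governed by Lemmas~\ref{l3.1}--\ref{l3.3}. Gronwall's inequality then yields the claimed estimate $\eqref{3.32}$, the largeness of $\lambda$ (via $\eqref{lam}$ with $D\ge3D_2$) being exactly what renders the pressure contributions integrable through Lemma~\ref{l3.3}. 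I expect the single hardest point to be the bookkeeping of the bulk-viscosity term: verifying that the integration-by-parts manipulations genuinely replace $(\mu+\lambda)\|\divv\dot{\mathbf{u}}\|_{L^2}^2$ by the coercive $(\mu+\lambda)\|\divf\mathbf{u}\|_{L^2}^2$ with only $\lambda$-independent remainders, and that the extra terms produced by the Hodge decomposition (whose derivatives land on $\mathbf{u}$) can be cleared by further integration by parts rather than at the cost of a factor of $\lambda$.
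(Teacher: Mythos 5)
Your proposal follows essentially the same route as the paper's proof: applying $\partial_t+\divv(\mathbf{u}\,\cdot)$ to the momentum equation and testing with $\sigma\dot u^j$, working with the coercive quantity $(\mu+\lambda)\sigma\|\divf\mathbf{u}\|_{L^2}^2$ instead of $(\mu+\lambda)\sigma\|\divv\dot{\mathbf{u}}\|_{L^2}^2$, isolating $\lambda$ via the effective-viscous-flux relation (your material-derivative identity $(2\mu+\lambda)\divf\mathbf{u}=\dot F+(\partial_t+\mathbf{u}\cdot\nabla)(P-P(\tilde\rho)+\tfrac12|\mathbf{B}|^2)$ is exactly the mechanism behind the paper's treatment of the terms $\mathcal{J}_{31}$ and $\mathcal{J}_{34}$, where $F_t$ is then removed by integration by parts), using the Hodge decomposition with \eqref{E2} for the divergence-free part, testing the time-differentiated magnetic equation against $\sigma\mathbf{B}_t$ with elliptic regularity for $\nabla^2\mathbf{B}$, and closing by Gronwall with the pressure contributions made integrable through Lemma \ref{l3.3} and hypothesis \eqref{3.1}. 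The point you flag as hardest---that the $(\mu+\lambda)$-weighted remainders involving $u^i_ju^j_i$ must be cleared through the $\mathcal{P}$/$\mathcal{Q}$ splitting and integration by parts rather than crude Cauchy--Schwarz---is indeed precisely where the paper's estimates \eqref{3.40}--\eqref{3.41} do the work, and your plan handles it the same way.
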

\begin{proof}
Operating $\sigma\dot{u}^j[\partial/\partial t+\divv({\mathbf{u}}\cdot)]$ on $\eqref{3.7}^j$, summing all the equalities with respect to $j$, and integrating the resultant over $\mathbb{R}^2$, we get from
$\eqref{a1}_1$ that
\begin{align}\label{3.33}
&\frac{1}{2}\frac{\mathrm{d}}{\mathrm{d}t}\int\sigma\rho|\dot{\mathbf{u}}|^2\mathrm{d}\mathbf{x}
-\frac{\sigma'}{2}\int\rho|\dot{\mathbf{u}}|^2\mathrm{d}\mathbf{x}\notag\\
&=-\sigma\int\dot{u}^j\big[\partial_j P_{t}+\divv(\mathbf{u}\partial_{j}P)\big]\mathrm{d}\mathbf{x}
+\mu\sigma\int\dot{u}^j\big[\Delta u_{t}^j+\divv\big( \mathbf{u}\Delta u^{j}\big)\big]\mathrm{d}\mathbf{x}\notag\\
&\quad+(\mu+\lambda)\sigma\int\dot{u}^j\big[\partial_j\divv\mathbf{u}_t +\divv\big(\mathbf{u}\partial_j\divv\mathbf{u}\big)\big]\mathrm{d}\mathbf{x}
-\sigma\int\dot{u}^j\big[\partial_j(B^iB^i_t)+\divv\big(B^i\partial_jB^i\mathbf{u}\big)\big]
\mathrm{d}\mathbf{x}\notag\\
&\quad+\sigma\int\dot{u}^j\big[\partial_t\big(B^i\partial_iB^j\big)
+\divv\big(B^i\partial_iB^j\mathbf{u}\big)\big]
\mathrm{d}\mathbf{x}
\triangleq \sum_{i=1}^{5}\mathcal{J}_i.
\end{align}

Next, one needs to estimate each $\mathcal{J}_i$.
Integration by parts and applying Cauchy--Schwarz inequality, it follows that
\begin{align}\label{3.34}
\mathcal{J}_{1}&=\sigma\int P_{t}\divv\dot{\mathbf{u}}\mathrm{d}\mathbf{x}-\sigma\int\dot{\mathbf{u}}\cdot\nabla\divv(P\mathbf{u})\mathrm{d}\mathbf{x}
+\sigma\int\dot{u}^j\divv(P\partial_{j}\mathbf{u})\mathrm{d}\mathbf{x}\notag\\
&=\sigma\int \big(P_{t}+\divv(P\mathbf{u})\big)\divv\dot{\mathbf{u}}\mathrm{d}\mathbf{x}
+\sigma\int\dot{\mathbf{u}}\cdot\nabla\mathbf{u}\cdot\nabla P\mathrm{d}\mathbf{x}
+\sigma\int P\dot{\mathbf{u}}\cdot\nabla\divv\mathbf{u}\mathrm{d}\mathbf{x}\notag\\
&=-\sigma\int(\gamma-1)P\divv\mathbf{u}\divv\dot{\mathbf{u}}\mathrm{d}\mathbf{x}
-\sigma\int P\dot{u}_j^iu_i^j \mathrm{d}\mathbf{x}
\notag\\
&\leq \frac{\mu\sigma}{16}\|\nabla\dot{\mathbf{u}}\|_{L^2}^2+C\sigma\|\nabla\mathbf{u}\|_{L^2}^2.
\end{align}
According to Lemma $\ref{E0}$, one has that
\begin{align}\label{3.35}
\mathcal{J}_2&=\mu\sigma\int\dot{u}^j\big[\Delta \dot{u}^j-\Delta(\mathbf{u}\cdot\nabla u^j)+\divv\big( \mathbf{u}\Delta u^{j}\big)\big]\mathrm{d}\mathbf{x}\notag\\
&=\mu\sigma\int\big[-|\nabla\dot{\mathbf{u}}|^2+\dot{u}^j_i(u^ku_k^j)_i
-\dot{u}^j_i(u^ku^j_i)_k-\dot{u}^j(u^k_iu^j_i)_k\big]\mathrm{d}\mathbf{x}\notag\\
&=\mu\sigma\int\big[-|\nabla\dot{\mathbf{u}}|^2+\dot{u}^j_i(u^ku_k^j)_i
-\dot{u}^j_i(u^ku^j_i)_k+\dot{u}_k^j(u^k_iu^j_i)\big]\mathrm{d}\mathbf{x}\notag\\
&\leq-\frac{3\mu\sigma}{4}\|\nabla\dot{\mathbf{u}}\|_{L^2}^2+C\sigma\|\nabla\mathbf{u}\|_{L^4}^4\notag\\
&\leq-\frac{3\mu\sigma}{4}\|\nabla\dot{\mathbf{u}}\|_{L^2}^2
+C\sigma(1+\|\nabla\mathbf{u}\|_{L^2}^2)
(\|\sqrt{\rho}\dot{\mathbf{u}}\|_{L^2}^2+\|\nabla\mathbf{u}\|_{L^2}^2)
+C\sigma(1+C_0)^2\|\nabla\mathbf{B}\|_{L^2}^4\|\nabla^2\mathbf{B}\|_{L^2}^2\notag\\
&\quad+
\frac{C\sigma(1+C_0)^2}{(2\mu+\lambda)^4}\|\nabla\mathbf{B}\|_{L^2}^2\|\sqrt{\rho}\dot{\mathbf{u}}\|_{L^2}^2
+\frac{C}{(2\mu+\lambda)^4}\|P-P(\tilde{\rho})\|_{L^4}^4,
\end{align}
where we have used the fact $0\leq\sigma, \sigma'\leq1$ for $t>0$.
With regard to $\mathcal{J}_3$, we begin by the following decomposition:
\begin{align}\label{3.36}
\mathcal{J}_3&=(\mu+\lambda)\sigma\int\dot{u}^j[\partial_j\divv\mathbf{u}_t+ \divv(\mathbf{u}\partial_j\divv\mathbf{u})]\mathrm{d}\mathbf{x}\notag\\
&=(\mu+\lambda)\sigma\int\dot{u}^j[\partial_j\divv\mathbf{u}_t+ \partial_j\divv(\mathbf{u}\divv\mathbf{u})-\divv(\partial_j\mathbf{u}\divv\mathbf{u})]\mathrm{d}\mathbf{x}\notag\\
&=-(\mu+\lambda)\sigma\int\divv\dot{\mathbf{u}}[\divv\mathbf{u}_t+\divv(\mathbf{u}\divv\mathbf{u})]\mathrm{d}\mathbf{x}-(\mu+\lambda)\sigma\int\dot{u}^j\divv(\partial_j\mathbf{u}\divv\mathbf{u})\mathrm{d}\mathbf{x}\notag\\
&=-(\mu+\lambda)\sigma\int(\divv\mathbf{u}_t+\mathbf{u}\cdot\nabla\divv\mathbf{u}+u_j^iu_i^j)
[\divv\mathbf{u}_t+\mathbf{u}\cdot\nabla\divv\mathbf{u}+(\divv\mathbf{u})^2]\mathrm{d}\mathbf{x}
\notag\\&\quad
-(\mu+\lambda)\sigma\int\dot{u}^j\divv(\partial_j\mathbf{u}\divv\mathbf{u})\mathrm{d}\mathbf{x}\notag\\
&=-(\mu+\lambda)\sigma\int\big[(\divf\mathbf{u})^2
+\divf\mathbf{u}(\divv\mathbf{u})^2+u_j^iu_i^j(\divv\mathbf{u})^2
-\partial_j\mathbf{u}\cdot\nabla\dot{u}^j\divv\mathbf{u}+u_j^iu_i^j\divf\mathbf{u}\big]\mathrm{d}\mathbf{x}\notag\\
&\triangleq-(\mu+\lambda)\sigma\|\divf\mathbf{u}\|_{L^2}^2+\sum_{i=1}^4\mathcal{J}_{3i}.
\end{align}

Our next goal is to bound each $\mathcal{J}_{3i}$. It follows from Gagliardo--Nirenberg inequality, H\"older's inequality, \eqref{1.6}, Lemma $\ref{E0}$, and Lemma $\ref{l3.1}$ that
\begin{align}\label{3.37}
\mathcal{J}_{31}&=-\frac{(\mu+\lambda)\sigma}{(2\mu+\lambda)^2}
\int\divf\mathbf{u}\Big(F+P-P(\tilde{\rho})+\frac12|\mathbf{B}|^2\Big)^2\mathrm{d}\mathbf{x}\notag\\
&\leq\frac{C(\mu+\lambda)\sigma}{(2\mu+\lambda)^2}\|\divf\mathbf{u}\|_{L^2}
\big(\|F\|_{L^4}^2+\|P-P(\tilde{\rho})\|_{L^4}^2+\||\mathbf{B}|^2\|_{L^4}^2\big)
\notag\\
&\leq\frac{C(\mu+\lambda)\sigma}{(2\mu+\lambda)^2}\|\divf\mathbf{u}\|_{L^2}
\big(\|F\|_{L^2}\|\nabla F\|_{L^2}+\|P-P(\tilde{\rho})\|_{L^4}^2
+\|\mathbf{B}\|_{L^\infty}^2\|\mathbf{B}\|_{L^4}^2\big)\notag\\
&\leq\frac{C(\mu+\lambda)\sigma}{(2\mu+\lambda)^2}\|\divf\mathbf{u}\|_{L^2}
\Big[\big(2\mu+\lambda)
\|\divv\mathbf{u}\|_{L^2}+\|P-P(\tilde{\rho})\|_{L^2}+\|\mathbf{B}\|_{L^4}^2\big)
(\|\sqrt{\rho}\dot{\mathbf{u}}\|_{L^2}+\|\mathbf{B}\cdot\nabla\mathbf{B}\|_{L^2})\notag\\
&\quad+\|P-P(\tilde{\rho})\|_{L^4}^2+C_0^\frac34\|\nabla\mathbf{B}\|_{L^2}^2
\|\nabla^2\mathbf{B}\|_{L^2}^\frac12\Big]\notag\\
&\leq\frac{(\mu+\lambda)\sigma}{16}\|\divf\mathbf{u}\|_{L^2}^2
+C\sigma\big(C_0+\|\nabla\mathbf{u}\|_{L^2}^2+C_0\|\nabla\mathbf{B}\|_{L^2}^2\big)
\big(\|\sqrt{\rho}\dot{\mathbf{u}}\|_{L^2}^2+\|\nabla\mathbf{u}\|_{L^2}^2\big)\notag\\
&\quad+C\sigma(1+C_0)^2\|\nabla\mathbf{B}\|_{L^2}^4\|\nabla^2\mathbf{B}\|_{L^2}^2
+\frac{C}{(2\mu+\lambda)^3}\|P-P(\tilde{\rho})\|_{L^4}^4.
\end{align}
Using the Hodge-type decomposition, Lemma $\ref{E0}$, and H\"older's inequality, we obtain that
\begin{align}\label{3.38}
\mathcal{J}_{32}&=-(\mu+\lambda)\sigma\int u_j^iu_i^j(\divv\mathbf{u})^2\mathrm{d}\mathbf{x}\notag\\
&\leq C(\mu+\lambda)\sigma\int\big(|\nabla\mathcal{P}\mathbf{u}|^2+|\nabla\mathcal{Q}\mathbf{u}|^2\big)(\divv\mathbf{u})^2\mathrm{d}\mathbf{x}\notag\\
&\leq C(\mu+\lambda)^2\sigma\|\divv\mathbf{u}\|_{L^4}^4+C\sigma\|\nabla\mathcal{P}\mathbf{u}\|_{L^4}^4\notag\\
&\leq\frac{C(\mu+\lambda)^2\sigma}{(2\mu+\lambda)^4}
\big(\|F\|_{L^4}^4+\|P-P(\tilde{\rho})\|_{L^4}^4
+\||\mathbf{B}|^2\|_{L^4}^4\big)
+C\sigma\big(\|\sqrt{\rho}\dot{\mathbf{u}}\|_{L^{2}}^{2}\|\nabla\mathbf{u}\|_{L^{2}}^{2}
+\|\mathbf{B}\cdot\nabla\mathbf{B}\|_{L^2}^2\|\nabla\mathbf{u}\|_{L^{2}}^{2}\big)
\notag\\
&\leq C\sigma\big(C_0+\|\nabla\mathbf{u}\|_{L^2}^2+C_0\|\nabla\mathbf{B}\|_{L^2}^2\big)
\big(\|\sqrt{\rho}\dot{\mathbf{u}}\|_{L^2}^2+\|\nabla\mathbf{u}\|_{L^2}^2\big)
+C\sigma(1+C_0)^3\|\nabla\mathbf{B}\|_{L^2}^4\|\nabla^2\mathbf{B}\|_{L^2}^2\notag\\
&\quad+\frac{C}{(2\mu+\lambda)^2}\|P-P(\tilde{\rho})\|_{L^4}^4.
\end{align}
Moreover, noting that $\divv(\mathcal{P}\mathbf{u})=0$ and
\begin{equation*}
\mathcal{Q}\mathbf{u}=\frac{-(-\Delta)^{-1}\nabla\big[F+P(\rho)-P(\tilde{\rho})
+\frac{1}{2}|\mathbf{B}|^2\big]}{2\mu+\lambda},
\end{equation*}
one thus gets that
\begin{align}\label{3.39}
\mathcal{J}_{33}&=\frac{(\mu+\lambda)\sigma}{2\mu+\lambda}
\int\partial_j\mathbf{u}\cdot\nabla\dot{u}^j\Big(F+P-P(\tilde{\rho})
+\frac12|\mathbf{B}|^2\Big)\mathrm{d}\mathbf{x}\notag\\
&=\frac{(\mu+\lambda)\sigma}{2\mu+\lambda}
\int\big(\mathcal{P}\mathbf{u}+\mathcal{Q}\mathbf{u}\big)_j
\cdot\nabla\dot{u}^jF\mathrm{d}\mathbf{x}
+\frac{(\mu+\lambda)\sigma}{2(2\mu+\lambda)}
\int\partial_j\mathbf{u}\cdot\nabla\dot{u}^j
\big(2P-2P(\tilde{\rho})+|\mathbf{B}|^2\big)\mathrm{d}\mathbf{x}\notag\\
&=-\frac{(\mu+\lambda)\sigma}{2\mu+\lambda}
\int(\mathcal{P}\mathbf{u})_j\cdot\nabla F\dot{u}^j\mathrm{d}\mathbf{x}
+\frac{(\mu+\lambda)\sigma}{2(2\mu+\lambda)}
\int\partial_j\mathbf{u}\cdot\nabla\dot{u}^j
\big(2P-2P(\tilde{\rho})+|\mathbf{B}|^2\big)\mathrm{d}\mathbf{x}\notag\\
&\quad+\frac{(\mu+\lambda)\sigma}{2\mu+\lambda}
\int(\mathcal{Q}\mathbf{u})_j\cdot\nabla\dot{u}^j F\mathrm{d}\mathbf{x}\notag\\
&\leq C\sigma\|\nabla F\|_{L^2}\|\nabla\mathcal{P}\mathbf{u}\|_{L^4}\|\dot{\mathbf{u}}\|_{L^4}
+C\sigma\|\nabla\mathbf{u}\|_{L^2}\|\nabla\dot{\mathbf{u}}\|_{L^2}
\big(\|P-P(\tilde{\rho})\|_{L^\infty}+\|\mathbf{B}\|_{L^\infty}^2\big)\notag\\
&\quad+\frac{C(\mu+\lambda)\sigma}{(2\mu+\lambda)^2}
\|\nabla\dot{\mathbf{u}}\|_{L^2}\big(\|F\|_{L^4}^2
+\|P(\rho)-P(\tilde{\rho})\|_{L^4}^2+\|\mathbf{B}\|_{L^\infty}^2
\|\mathbf{B}\|_{L^4}^2\big)\notag\\
&\leq \frac{\mu\sigma}{16}\|\nabla\dot{\mathbf{u}}\|_{L^2}^2
+C\sigma\big(C_0+\|\nabla\mathbf{u}\|_{L^{2}}^{2}
+C_0\|\nabla\mathbf{B}\|_{L^{2}}^{2}\big)
\big(\|\sqrt{\rho}\dot{\mathbf{u}}\|_{L^{2}}^{2}
+\|\nabla\mathbf{u}\|_{L^{2}}^{2}\big)
\big(1+\|\sqrt{\rho}\dot{\mathbf{u}}\|_{L^{2}}^{2}\big)\notag\\
&\quad+C\sigma(1+C_0)^3\|\nabla\mathbf{B}\|_{L^2}^4
\|\nabla^2\mathbf{B}\|_{L^2}^2+\frac{C}{(2\mu+\lambda)^2}
\|P-P(\tilde{\rho})\|_{L^4}^4,
\end{align}
where we have used the estimate below, following by replacing $\mathbf{u}$ with $\dot{\mathbf{u}}$ in the arguments for \eqref{3.11},
\begin{equation*}
\|\dot{\mathbf{u}}\|_{L^2}^2
\leq C\|\sqrt{\rho}\dot{\mathbf{u}}\|_{L^2}^2+CC_0\|\nabla\dot{\mathbf{u}}\|_{L^2}^2.
\end{equation*}
As for $\mathcal{J}_{34}$, the Hodge-type decomposition leads to
\begin{align}\label{3.40}
\mathcal{J}_{34}
&=-(\mu+\lambda)\sigma\int\divf\mathbf{u}(\mathcal{Q}\mathbf{u}+\mathcal{P}\mathbf{u})_j^i(\mathcal{Q}\mathbf{u}
+\mathcal{P}\mathbf{u})_i^j\mathrm{d}\mathbf{x}\notag\\
&=-(\mu+\lambda)\sigma\int\divf\mathbf{u}\big[(\mathcal{Q}\mathbf{u})_j^i(\mathcal{Q}\mathbf{u})_i^j+
(\mathcal{Q}\mathbf{u})_j^i(\mathcal{P}\mathbf{u})_i^j+(\mathcal{P}\mathbf{u})_j^i(\mathcal{Q}\mathbf{u})_i^j
+(\mathcal{P}\mathbf{u})_j^i(\mathcal{P}\mathbf{u})_i^j\big]\mathrm{d}\mathbf{x}\notag\\
&\leq-(\mu+\lambda)\sigma\int\divf\mathbf{u}(\mathcal{P}\mathbf{u})_j^i(\mathcal{P}\mathbf{u})_i^j\mathrm{d}\mathbf{x}
+\frac{(\mu+\lambda)\sigma}{16}\|\divf\mathbf{u}\|_{L^2}^2+C(\mu+\lambda)\sigma\int|\nabla\mathbf{u}|^2
|\nabla\mathcal{Q}\mathbf{u}|^2\mathrm{d}\mathbf{x}\notag\\
&\leq-(\mu+\lambda)\sigma\int\divf\mathbf{u}(\mathcal{P}\mathbf{u})_j^i(\mathcal{P}\mathbf{u})_i^j\mathrm{d}\mathbf{x}
+\frac{(\mu+\lambda)\sigma}{16}\|\divf\mathbf{u}\|_{L^2}^2+C\sigma\|\nabla\mathbf{u}\|_{L^4}^4
+C(\mu+\lambda)^2\sigma\|\divv\mathbf{u}\|_{L^4}^4\notag\\
&\leq-\frac{(\mu+\lambda)\sigma}{2\mu+\lambda}\int F_t(\mathcal{P}\mathbf{u})_j^i(\mathcal{P}\mathbf{u})_i^j\mathrm{d}\mathbf{x}
+\frac{(\mu+\lambda)\sigma}{16}\|\divf\mathbf{u}\|_{L^2}^2
+C\sigma(1+C_0)^3\|\nabla\mathbf{B}\|_{L^2}^4
\big(\|\nabla^2\mathbf{B}\|_{L^2}^2+\|\mathbf{B}_t\|_{L^2}^2\big)\notag\\
&\quad
+C\sigma\big(1+C_0+\|\nabla\mathbf{u}\|_{L^{2}}^{2}
+C_0\|\nabla\mathbf{B}\|_{L^{2}}^{2}\big)
\big(\|\sqrt{\rho}\dot{\mathbf{u}}\|_{L^{2}}^{2}
+\|\nabla\mathbf{u}\|_{L^{2}}^{2}+\|\nabla\mathbf{B}\|_{L^{2}}^{2}\big)
\big(1+\|\sqrt{\rho}\dot{\mathbf{u}}\|_{L^{2}}^{2}\big)\notag\\
&\quad+\frac{C}{(2\mu+\lambda)^2}\|P-P(\tilde{\rho})\|_{L^4}^4,
\end{align}
where in the last inequality we have employed
\begin{align*}
&-(\mu+\lambda)\sigma\int(\divv\mathbf{u}_t+\mathbf{u}\cdot\nabla\divv\mathbf{u})(\mathcal{P}\mathbf{u})_j^i(\mathcal{P}\mathbf{u})_i^j\mathrm{d}\mathbf{x}\notag\\
&=-\frac{(\mu+\lambda)\sigma}{2\mu+\lambda}\int\big[(P-P(\tilde{\rho}))_t+\mathbf{u}\cdot\nabla(P-P(\tilde{\rho}))+\mathbf{B}\cdot\mathbf{B}_t\big](\mathcal{P}\mathbf{u})_j^i(\mathcal{P}\mathbf{u})_i^j\mathrm{d}\mathbf{x}
\notag\\
&\quad-\frac{(\mu+\lambda)\sigma}{2\mu+\lambda}\int\mathbf{u}\cdot\nabla\Big(F+\frac12|\mathbf{B}|^2\Big)(\mathcal{P}\mathbf{u})_{j}^{i}(\mathcal{P}\mathbf{u})_{i}^{j}\mathrm{d}\mathbf{x}
-\frac{(\mu+\lambda)\sigma}{2\mu+\lambda}\int F_t(\mathcal{P}\mathbf{u})_j^i(\mathcal{P}\mathbf{u})_i^j\mathrm{d}\mathbf{x}\notag\\
&=\frac{(\mu+\lambda)\sigma}{2\mu+\lambda}\int(\gamma P\divv\mathbf{u}-\mathbf{B}\cdot\mathbf{B}_t)(\mathcal{P}\mathbf{u})_j^i(\mathcal{P}\mathbf{u})_i^j\mathrm{d}\mathbf{x}
-\frac{(\mu+\lambda)\sigma}{2\mu+\lambda}\int\mathbf{u}\cdot\nabla\Big(F+\frac12|\mathbf{B}|^2\Big)(\mathcal{P}\mathbf{u})_{j}^{i}(\mathcal{P}\mathbf{u})_{i}^{j}\mathrm{d}\mathbf{x}
\notag\\
&\quad
-\frac{(\mu+\lambda)\sigma}{2\mu+\lambda}\int F_t(\mathcal{P}\mathbf{u})_j^i(\mathcal{P}\mathbf{u})_i^j\mathrm{d}\mathbf{x}\notag\\
&\leq C\sigma\big(\|\nabla\mathcal{P}\mathbf{u}\|_{L^4}^2\|\nabla\mathbf{u}\|_{L^2}
+\|\nabla\mathcal{P}\mathbf{u}\|_{L^8}^2\|\mathbf{B}\|_{L^4}\|\mathbf{B}_t\|_{L^2}\big)
+C\sigma\|\mathbf{u}\|_{L^4}\|\nabla\mathcal{P}\mathbf{u}\|_{L^8}^2\big(\|\nabla F\|_{L^2}+\|\mathbf{B}\|_{L^\infty}\|\nabla \mathbf{B}\|_{L^2}\big)
\notag\\
&\quad
-\frac{(\mu+\lambda)\sigma}{2\mu+\lambda}\int F_t(\mathcal{P}\mathbf{u})_j^i(\mathcal{P}\mathbf{u})_i^j\mathrm{d}\mathbf{x}\notag\\
&\leq
C\sigma\big(1+C_0+\|\nabla\mathbf{u}\|_{L^{2}}^{2}
+C_0\|\nabla\mathbf{B}\|_{L^{2}}^{2}\big)
\big(\|\sqrt{\rho}\dot{\mathbf{u}}\|_{L^{2}}^{2}
+\|\nabla\mathbf{u}\|_{L^{2}}^{2}+\|\nabla\mathbf{B}\|_{L^{2}}^{2}\big)
\big(1+\|\sqrt{\rho}\dot{\mathbf{u}}\|_{L^{2}}^{2}\big)\notag\\
&\quad+C\sigma(1+C_0)^2\|\nabla\mathbf{B}\|_{L^2}^4
\big(\|\nabla^2\mathbf{B}\|_{L^2}^2+\|\mathbf{B}_t\|_{L^2}^2\big)
-\frac{(\mu+\lambda)\sigma}{2\mu+\lambda}\int F_t(\mathcal{P}\mathbf{u})_j^i(\mathcal{P}\mathbf{u})_i^j\mathrm{d}\mathbf{x}.
\end{align*}
Due to $\divv(\mathcal{P}\mathbf{u})=0$, the term involving $F_t$ in \eqref{3.40} can be handed via integration by parts as follows
\begin{align}\label{3.41}
&-\frac{(\mu+\lambda)\sigma}{2\mu+\lambda}\int F_t
(\mathcal{P}\mathbf{u})_{j}^{i}(\mathcal{P}\mathbf{u})_{i}^{j}\mathrm{d}\mathbf{x}\notag\\
&=-\frac{\mu+\lambda}{2\mu+\lambda}\frac{\mathrm{d}}{\mathrm{d}t}\int\sigma F
(\mathcal{P}\mathbf{u})_{j}^{i}(\mathcal{P}\mathbf{u})_{i}^{j}\mathrm{d}\mathbf{x}
+\frac{\mu+\lambda}{2\mu+\lambda}\int \sigma'F
(\mathcal{P}\mathbf{u})_{j}^{i}(\mathcal{P}\mathbf{u})_{i}^{j}\mathrm{d}\mathbf{x}
+\frac{\mu+\lambda}{2\mu+\lambda}\int \sigma F
(\mathcal{P}\mathbf{u})_{jt}^{i}(\mathcal{P}\mathbf{u})_{i}^{j}\mathrm{d}\mathbf{x}
\notag\\
&\quad+\frac{\mu+\lambda}{2\mu+\lambda}\int \sigma F
(\mathcal{P}\mathbf{u})_{j}^{i}(\mathcal{P}\mathbf{u})_{it}^{j}\mathrm{d}\mathbf{x}\notag\\
&=\frac{\mu+\lambda}{2\mu+\lambda}\frac{\mathrm{d}}{\mathrm{d}t}\int\sigma F_j
(\mathcal{P}\mathbf{u})^{i}(\mathcal{P}\mathbf{u})_{i}^{j}\mathrm{d}\mathbf{x}
-\frac{\mu+\lambda}{2\mu+\lambda}\int \sigma'F_j
(\mathcal{P}\mathbf{u})^{i}(\mathcal{P}\mathbf{u})_{i}^{j}\mathrm{d}\mathbf{x}
-\frac{\mu+\lambda}{2\mu+\lambda}\int \sigma F_j
(\mathcal{P}\mathbf{u})_{t}^{i}(\mathcal{P}\mathbf{u})_i^{j}\mathrm{d}\mathbf{x}
\notag\\&\quad
-\frac{\mu+\lambda}{2\mu+\lambda}\int \sigma F_i
(\mathcal{P}\mathbf{u})^{i}_j(\mathcal{P}\mathbf{u})_{t}^{j}\mathrm{d}\mathbf{x}
\notag\\
&=\frac{\mu+\lambda}{2\mu+\lambda}\frac{\mathrm{d}}{\mathrm{d}t}\int\sigma F_j
(\mathcal{P}\mathbf{u})^{i}(\mathcal{P}\mathbf{u})_{i}^{j}\mathrm{d}\mathbf{x}
-\frac{\mu+\lambda}{2\mu+\lambda}\int \sigma'F_j
(\mathcal{P}\mathbf{u})^{i}(\mathcal{P}\mathbf{u})_{i}^{j}\mathrm{d}\mathbf{x}
\notag\\&\quad
-\frac{\mu+\lambda}{2\mu+\lambda}\int \sigma F_j
\big(\mathcal{P}(\dot{\mathbf{u}}-\mathbf{u}\cdot\nabla\mathbf{u})\big)^{i}(\mathcal{P}\mathbf{u})_i^{j}\mathrm{d}\mathbf{x}
-\frac{\mu+\lambda}{2\mu+\lambda}\int \sigma F_i
(\mathcal{P}\mathbf{u})_j^{i}\big(\mathcal{P}(\dot{\mathbf{u}}-\mathbf{u}\cdot\nabla\mathbf{u})\big)^{j}\mathrm{d}\mathbf{x}\notag\\
&\leq\frac{\mu+\lambda}{2\mu+\lambda}\frac{\mathrm{d}}{\mathrm{d}t}\int\sigma F_j
(\mathcal{P}\mathbf{u})^{i}(\mathcal{P}\mathbf{u})_{i}^{j}\mathrm{d}\mathbf{x}
+C\|\nabla F\|_{L^2}\|\mathbf{u}\|_{L^4}\|\nabla\mathcal{P}\mathbf{u}\|_{L^4}
+C\sigma\|\nabla F\|_{L^2}\|\dot{\mathbf{u}}\|_{L^4}\|\nabla\mathcal{P}\mathbf{u}\|_{L^4}
\notag\\&\quad
+C\sigma\|\nabla F\|_{L^2}\|\mathbf{u}\|_{L^{8}}\|\nabla\mathbf{u}\|_{L^4}\|\nabla\mathcal{P}\mathbf{u}\|_{L^8}\notag\\
&\leq\frac{\mu+\lambda}{2\mu+\lambda}\frac{\mathrm{d}}{\mathrm{d}t}\int\sigma F_j
(\mathcal{P}\mathbf{u})^{i}(\mathcal{P}\mathbf{u})_{i}^{j}\mathrm{d}\mathbf{x}
+\frac{\mu\sigma}{16}\|\nabla \dot{\mathbf{u}}\|_{L^2}^2
+C\sigma(1+C_0)^2\|\nabla\mathbf{B}\|_{L^2}^4\|\nabla^2\mathbf{B}\|_{L^2}^2\notag\\
&\quad
+C\big(1+C_0+\|\nabla\mathbf{u}\|_{L^2}^2+C_0\|\nabla\mathbf{B}\|_{L^2}^2\big)
\big(1+\sigma\|\sqrt{\rho}\dot{\mathbf{u}}\|_{L^{2}}^{2}
+\|\nabla\mathbf{u}\|_{L^{2}}^{2}\big)
\big(\|\sqrt{\rho}\dot{\mathbf{u}}\|_{L^2}^2
+\|\nabla\mathbf{u}\|_{L^2}^2\big)\notag\\&\quad
+\frac{C}{(2\mu+\lambda)^4}\|P-P(\tilde{\rho})\|_{L^4}^4.
\end{align}
Thus, substituting \eqref{3.37}--\eqref{3.41} into \eqref{3.36}, we obtain that
\begin{align}\label{3.42}
\mathcal{J}_{3}
&\leq\frac{\mu+\lambda}{2\mu+\lambda}\frac{\mathrm{d}}{\mathrm{d}t}\int\sigma F_j
(\mathcal{P}\mathbf{u})^{i}(\mathcal{P}\mathbf{u})_{i}^{j}\mathrm{d}\mathbf{x}
-\frac{3(\mu+\lambda)\sigma}{4}\|\divf\mathbf{u}\|_{L^2}^2
+\frac{\mu\sigma}{8}\|\nabla \dot{\mathbf{u}}\|_{L^2}^2\notag\\
&\quad+C\big(1+C_0+\|\nabla\mathbf{u}\|_{L^2}^2+C_0\|\nabla\mathbf{B}\|_{L^2}^2\big)
\big(1+\sigma\|\sqrt{\rho}\dot{\mathbf{u}}\|_{L^{2}}^{2}
+\|\nabla\mathbf{u}\|_{L^{2}}^{2}\big)
\big(\|\sqrt{\rho}\dot{\mathbf{u}}\|_{L^2}^2
+\|\nabla\mathbf{u}\|_{L^2}^2+\|\nabla\mathbf{B}\|_{L^2}^2\big)
\notag\\
&\quad+C\sigma(1+C_0)^3\|\nabla\mathbf{B}\|_{L^2}^4
\big(\|\nabla^2\mathbf{B}\|_{L^2}^2+\|\mathbf{B}_t\|_{L^2}^2\big)
+\frac{C}{(2\mu+\lambda)^2}\|P-P(\tilde{\rho})\|_{L^4}^4.
\end{align}
For the terms $\mathcal{J}_{4}$ and $\mathcal{J}_{5}$, it follows from Gagliardo--Nirenberg inequality that
\begin{align}\label{3.43}
 &\,\mathcal{J}_{4}+\mathcal{J}_{5}\notag\\
 &=-\sigma\int\dot{u}^j\big[\partial_j(B^iB^i_t)+\divv(B^i\partial_jB^i\mathbf{u})\big]
\mathrm{d}\mathbf{x}
+\sigma\int\dot{u}^j\big[\partial_t\big(B^i\partial_iB^j\big)
+\divv\big(B^i\partial_iB^j\mathbf{u}\big)\big]
\mathrm{d}\mathbf{x}
\notag\\
&=\sigma\int\big[\partial_j\dot{u}^jB^iB^i_t
+\partial_k\dot{u}^jB^i\partial_jB^iu^k
-\partial_i\dot{u}^j\big(B^jB^i_t+B^j_tB^i\big)
-\partial_k\dot{u}^jB^i\partial_iB^ju^k\big]\mathrm{d}\mathbf{x}
\notag\\
&\leq C\sigma\|\nabla\dot{\mathbf{u}}\|_{L^2}
\big(\|\mathbf{B}\|_{L^4}\|\mathbf{B}_t\|_{L^4}
+\|\mathbf{B}\|_{L^8}\|\nabla\mathbf{B}\|_{L^4}\|\mathbf{u}\|_{L^8}\big)\notag\\
&\leq \frac{\mu\sigma}{16}\|\nabla \dot{\mathbf{u}}\|_{L^2}^2
+\frac{\nu\sigma}{8}\|\nabla\mathbf{B}_t\|_{L^2}^2
+C\sigma (1+C_0)\big(1+\|\nabla\mathbf{B}\|_{L^2}^4+\|\nabla\mathbf{u}\|_{L^2}^2\big)
\big(\|\nabla^2\mathbf{B}\|_{L^2}^2+\|\mathbf{B}_t\|_{L^2}^2\big).
\end{align}
Consequently, inserting \eqref{3.34}, \eqref{3.35}, \eqref{3.42}, and \eqref{3.43} into \eqref{3.33}, one derives from \eqref{3.6} that
\begin{align}\label{3.44}
&\frac{\mathrm{d}}{\mathrm{d}t}\Big(\frac{\sigma}{2}\|\sqrt{\rho}\dot{\mathbf{u}}\|_{L^{2}}^{2}
-\frac{\mu+\lambda}{2\mu+\lambda}\int\sigma F_j
(\mathcal{P}\mathbf{u})^{i}(\mathcal{P}\mathbf{u})_{i}^{j}\mathrm{d}\mathbf{x}\Big)
+\frac{\mu\sigma}{2}\|\nabla\dot{\mathbf{u}}\|_{L^2}^2
+\frac{(\mu+\lambda)\sigma}{2}\|\divf\mathbf{u}\|_{L^2}^2\notag\\
&\leq(2+M)^{\exp\big\{\frac{5}{2}D_2(1+C_0)^6\big\}}
\big(1+\sigma\|\sqrt{\rho}\dot{\mathbf{u}}\|_{L^{2}}^{2}
+\|\nabla\mathbf{u}\|_{L^{2}}^{2}\big)
\big(\|\sqrt{\rho}\dot{\mathbf{u}}\|_{L^{2}}^{2}
+\|\nabla\mathbf{u}\|_{L^{2}}^{2}+\|\nabla\mathbf{B}\|_{L^{2}}^{2}\big)\notag\\
&\quad
+(2+M)^{\exp\big\{\frac{5}{2}D_2(1+C_0)^6\big\}}\big(\sigma\|\nabla^2\mathbf{B}\|_{L^2}^2
+\sigma\|\mathbf{B}_t\|_{L^2}^2\big)
+\frac{\nu\sigma}{8}\|\nabla\mathbf{B}_t\|_{L^2}^2+\frac{C}{(2\mu+\lambda)^2}\|P-P(\tilde{\rho})\|_{L^4}^4.
\end{align}

It remains to estimate $\sigma\|\mathbf{B}_t\|_{L^2}^2$ and $\sigma\|\nabla^2\mathbf{B}\|_{L^2}^2$. To this end, differentiating $\eqref{a1}_3$ with respect to $t$ and multiplying the resultant by $\sigma\mathbf{B}_t$, we obtain from integration by parts that
\begin{align}\label{3.45}
 &\,\frac12\frac{\mathrm{d}}{\mathrm{d}t}\big(\sigma\|\mathbf{B}_t\|_{L^{2}}^2\big)
 +\nu\sigma\|\nabla\mathbf{B}_t\|_{L^{2}}^2
 -\frac12\sigma'\|\mathbf{B}_t\|_{L^{2}}^2\notag\\
 &=\sigma\int\big(\mathbf{B}_t\cdot\nabla\mathbf{u}-\mathbf{u}\cdot\nabla\mathbf{B}_t-\mathbf{B}_t\divv\mathbf{u}\big)\cdot\mathbf{B}_t\mathrm{d}\mathbf{x}
 +\sigma\int\big(\mathbf{B}\cdot\nabla\dot{\mathbf{u}}-\dot{\mathbf{u}}\cdot\nabla\mathbf{B}
 -\mathbf{B}\divv\dot{\mathbf{u}}\big)\cdot\mathbf{B}_t\mathrm{d}\mathbf{x}
 \notag\\&\quad
 -\sigma\int\big(\mathbf{B}\cdot\nabla(\mathbf{u}\cdot\nabla\mathbf{u})
 -(\mathbf{u}\cdot\nabla\mathbf{u})\cdot\nabla\mathbf{B}-\mathbf{B}\divv(\mathbf{u}\cdot\nabla\mathbf{u})\big)
 \cdot\mathbf{B}_t\mathrm{d}\mathbf{x}
 \triangleq \mathcal{L}_1+\mathcal{L}_2+\mathcal{L}_3.
\end{align}
By similar arguments, we conclude that
\begin{align}\label{3.46}
\mathcal{L}_1&\leq C\sigma\|\mathbf{B}_t\|_{L^4}^2\|\nabla\mathbf{u}\|_{L^2}\leq
\frac{\nu\sigma}{8}\|\nabla\mathbf{B}_t\|_{L^2}^2
+C\sigma\|\nabla\mathbf{u}\|_{L^2}^2\|\mathbf{B}_t\|_{L^2}^2,\\
\mathcal{L}_2&\leq C\sigma
\|\mathbf{B}_t\|_{L^4}\big(\|\nabla\dot{\mathbf{u}}\|_{L^2}\|\mathbf{B}\|_{L^4}
+\|\dot{\mathbf{u}}\|_{L^4}\|\nabla\mathbf{B}\|_{L^2}\big)\notag\\
&\leq \frac{\mu\sigma}{16}\|\nabla\dot{\mathbf{u}}\|_{L^2}^2
+\frac{\nu\sigma}{8}\|\nabla\mathbf{B}_t\|_{L^2}^2
+C\sigma C_0\big(1+\|\nabla\mathbf{B}\|_{L^2}^4\big)
\|\mathbf{B}_t\|_{L^2}^2+C\sigma\|\sqrt{\rho}\dot{\mathbf{u}}\|_{L^2}^2,\label{3.47}\\
\mathcal{L}_3
&\leq C\sigma\big(\|\mathbf{B}\|_{L^4}\|\mathbf{B}_t\|_{L^4}\|\nabla\mathbf{u}\|_{L^4}^2
+\|\mathbf{u}\|_{L^\infty}\|\nabla\mathbf{u}\|_{L^4}\|\mathbf{B}\|_{L^4}\|\nabla\mathbf{B}_t\|_{L^2}\big)\notag\\
&\leq\frac{\nu\sigma}{8}\|\nabla\mathbf{B}_t\|_{L^2}^2
+C\sigma(1+C_0)^3\big(1+\|\nabla\mathbf{B}\|_{L^2}^4\big)
\big(1+\|\nabla\mathbf{u}\|_{L^2}^2\big)
\big(\|\sqrt{\rho}\dot{\mathbf{u}}\|_{L^2}^2+\|\nabla\mathbf{u}\|_{L^2}^2\big)
\notag\\
&\quad
+C\sigma (1+C_0)^2\big(1+\|\nabla\mathbf{B}\|_{L^2}^4\big)
\big(\|\nabla^2\mathbf{B}\|_{L^2}^2+\|\mathbf{B}_t\|_{L^2}^2\big)
+\frac{C}{(2\mu+\lambda)^4}\|P-P(\tilde{\rho})\|_{L^4}^4.\label{3.48}
\end{align}
Moreover, using the $L^2$-estimate of elliptic system, one gets from $\eqref{a1}_3$ that
\begin{align}\label{3.49}
  \|\nabla^2\mathbf{B}\|_{L^2}^2&\leq C(\nu)\big(\|\mathbf{B}_t\|_{L^2}^2+\||\mathbf{u}||\nabla\mathbf{B}|\|_{L^2}^2+\||\mathbf{B}||\nabla\mathbf{u}|\|_{L^2}^2\big)\notag\\
&\leq C\big(\|\mathbf{B}_t\|_{L^2}^2+\|\mathbf{u}\|_{L^4}^2\|\nabla\mathbf{B}\|_{L^4}^2
+\|\mathbf{B}\|_{L^\infty}^2\|\nabla\mathbf{u}\|_{L^2}^2\big)\notag\\
&\leq \frac{1}{2}\|\nabla^2\mathbf{B}\|_{L^2}^2
+C\|\mathbf{B}_t\|_{L^2}^2
+C(1+C_0)\|\nabla\mathbf{u}\|_{L^2}^2
\big(1+\|\nabla\mathbf{u}\|_{L^2}^2\big)\big(1+\|\nabla\mathbf{B}\|_{L^2}^2\big).
\end{align}
Substituting \eqref{3.46}--\eqref{3.48} into \eqref{3.45}, and then adding \eqref{3.44},
it follows from \eqref{3.49} that
\begin{align}\label{3.50}
&\frac{\mathrm{d}}{\mathrm{d}t}\bigg(\frac{\sigma}{2}
\|\sqrt{\rho}\dot{\mathbf{u}}\|_{L^{2}}^{2}
+\frac{\sigma}{2}\|\mathbf{B}_t\|_{L^{2}}^2
-\frac{\mu+\lambda}{2\mu+\lambda}\int\sigma F_j
(\mathcal{P}\mathbf{u})^{i}(\mathcal{P}\mathbf{u})_{i}^{j}\mathrm{d}\mathbf{x}\bigg)
\notag\\&\quad
+\frac{\mu\sigma}{4}\|\nabla\dot{\mathbf{u}}\|_{L^2}^2
+\frac{(\mu+\lambda)\sigma}{4}\|\divf\mathbf{u}\|_{L^2}^2
+\frac{\nu\sigma}{4}\|\nabla\mathbf{B}_t\|_{L^{2}}^2\notag\\
&\leq(2+M)^{\exp\big\{3D_2(1+C_0)^6\big\}}
\big(1+\sigma\|\sqrt{\rho}\dot{\mathbf{u}}\|_{L^{2}}^{2}
+\|\nabla\mathbf{u}\|_{L^{2}}^{2}\big)
\big(\|\sqrt{\rho}\dot{\mathbf{u}}\|_{L^{2}}^{2}
+\|\nabla\mathbf{u}\|_{L^{2}}^{2}+\|\nabla\mathbf{B}\|_{L^{2}}^{2}\big)\notag\\
&\quad
+(2+M)^{\exp\big\{3D_2(1+C_0)^6\big\}}\bigg[ \big(1+\sigma\|\mathbf{B}_t\|_{L^2}^2\big)\|\mathbf{B}_t\|_{L^2}^2
+\frac{C}{(2\mu+\lambda)^2}\|P-P(\tilde{\rho})\|_{L^4}^4\bigg],
\end{align}
where we observe from Lemma $\ref{E0}$ and \eqref{3.49} that
\begin{align*}
&\frac{\mu+\lambda}{2\mu+\lambda}\left|\int\sigma F_j
(\mathcal{P}\mathbf{u})^{i}(\mathcal{P}\mathbf{u})_{i}^{j}\mathrm{d}\mathbf{x}\right|
\leq C\sigma\|\nabla F\|_{L^2}\|\mathbf{u}\|_{L^4}\|\nabla\mathcal{P}\mathbf{u}\|_{L^4}\notag\\
&\leq\frac{\sigma}{4}\|\sqrt{\rho}\dot{\mathbf{u}}\|_{L^{2}}^2
+\frac{\sigma}{4}\|\mathbf{B}_t\|_{L^{2}}^2
+C\sigma(1+C_0)
\big(\|\nabla\mathbf{u}\|_{L^{2}}^2+\|\nabla\mathbf{B}\|_{L^{2}}^2\big)^2
\big(1+\|\nabla\mathbf{u}\|_{L^{2}}^2\big).
\end{align*}

Now we define an auxiliary functional $\mathcal{E}_2(t)$ as
\begin{align*}
\mathcal{E}_2(t)\triangleq\frac{\sigma}{2}\|\sqrt{\rho}\dot{\mathbf{u}}\|_{L^{2}}^2
+\frac{\sigma}{2}\|\mathbf{B}_t\|_{L^{2}}^2
-\frac{\mu+\lambda}{2\mu+\lambda}\int\sigma F_j
(\mathcal{P}\mathbf{u})^{i}(\mathcal{P}\mathbf{u})_{i}^{j}\mathrm{d}\mathbf{x}
+(2+M)^{\exp\big\{\frac{5}{2}D_2(1+C_0)^6\big\}}
\mathcal{E}_1(t),
\end{align*}
where, by the definition of $\mathcal{E}_1(t)$ in \eqref{3.23}, one can see that
\begin{equation}\label{3.51}
\mathcal{E}_2(t)\thicksim \sigma\|\sqrt{\rho}\dot{\mathbf{u}}\|_{L^{2}}^2
+\sigma\|\mathbf{B}_t\|_{L^{2}}^2+\mathcal{E}_1(t).
\end{equation}
Setting
\begin{equation*}
\begin{cases}
f_2(t)\triangleq2+\mathcal{E}_2(t),\\
g_2(t)\triangleq(2+M)^{\exp\big\{\frac{13}{4}D_2(1+C_0)^6\big\}}
\bigg(\|\sqrt{\rho}\dot{\mathbf{u}}\|_{L^{2}}^2+\|\nabla\mathbf{u}\|_{L^{2}}^2
+\|\mathbf{B}_t\|_{L^{2}}^2+\|\nabla\mathbf{B}\|_{L^{2}}^2
+\frac{\|P-P(\tilde{\rho})\|_{L^4}^4}{(2\mu+\lambda)^2}\bigg)
\end{cases}
\end{equation*}
and taking the summation
\begin{align*}
(2+M)^{\exp\big\{\frac{7}{2}D_2(1+C_0)^6\big\}}
\times\big(\eqref{3.5}+\eqref{3.22}\big)+\eqref{3.50},
\end{align*}
one gets from \eqref{3.51} that
\begin{align*}
f'_2(t)\leq g_2(t)f_2(t)
\end{align*}
provided that $\lambda$ satisfies \eqref{lam} with $D\geq 3D_2$.
Thus, it follows from Gronwall's inequality and Lemmas \ref{l3.1}\text{--}\ref{l3.3} that
\begin{equation}\label{3.52}
  \sup_{0\leq t\leq T}\big(\sigma\|\sqrt{\rho}\dot{\mathbf{u}}\|_{L^{2}}^2
  +\sigma\|\mathbf{B}_t\|_{L^{2}}^2\big)\leq
  \exp\bigg\{(2+M)^{\exp\big\{\frac{13}{4}D_2(1+C_0)^6\big\}}\bigg\}.
\end{equation}
Moreover, \eqref{3.49} implies that
\begin{equation}\label{3.53}
  \sup_{0\leq t\leq T}\big(\sigma\|\nabla^2\mathbf{B}\|_{L^{2}}^2\big)\leq
  \exp\bigg\{(2+M)^{\exp\big\{\frac{7}{2}D_2(1+C_0)^6\big\}}\bigg\}.
\end{equation}
Integrating \eqref{3.50} with respect to $t$ over $(0,T)$, one infers from \eqref{3.52} and Lemmas $\ref{l3.1}\text{--}\ref{l3.3}$ that
\begin{align*}
&\int_0^T\big[\mu\sigma\|\nabla\dot{\mathbf{u}}\|_{L^2}^2
+(\mu+\lambda)\sigma\|\divf\mathbf{u}\|_{L^2}^2
+\nu\sigma\|\nabla\mathbf{B}_t\|_{L^{2}}^2\big]\mathrm{d}t\notag\\
&\leq C(1+M)+C(1+C_0)(2+M)^{\exp\big\{3D_2(1+C_0)^6\big\}}
 \exp\bigg\{(2+M)^{\exp\big\{\frac{7}{2}D_2(1+C_0)^6\big\}}\bigg\}\notag\\
&\leq
 \exp\bigg\{(2+M)^{\exp\big\{\frac{15}{4}D_2(1+C_0)^6\big\}}\bigg\},
\end{align*}
which along with \eqref{3.52} and \eqref{3.53} leads to \eqref{3.32}.
\end{proof}

Finally, inspired by \cite{DE97}, we establish the upper bound of density.
\begin{lemma}\label{l3.5}
Under the assumption \eqref{3.1}, it holds that
\begin{align*}
0\leq\rho(\mathbf{x},t)\leq\frac{7}{4}\hat{\rho}~\textit{a.e.}~\mathrm{on}~\mathbb{R}^2\times[0,T]
\end{align*}
provided that $\lambda$ satisfies \eqref{lam} with $D\geq 5D_2$.
\end{lemma}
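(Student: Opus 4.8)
The plan is to follow the Lagrangian approach of \cite{DE97} and track the density along particle trajectories. Let $\mathbf{X}(t)$ solve $\frac{\mathrm{d}}{\mathrm{d}t}\mathbf{X}=\mathbf{u}(\mathbf{X},t)$, $\mathbf{X}(0)=\mathbf{x}$, which are well defined since the strong solution of Lemma \ref{l2.1} has Lipschitz velocity. Writing $y(t)\triangleq\rho(\mathbf{X}(t),t)$, the continuity equation $\eqref{a1}_1$ gives $\frac{\mathrm{d}}{\mathrm{d}t}\ln y=-\divv\mathbf{u}$, and the definition of the effective viscous flux in \eqref{1.6} yields $\divv\mathbf{u}=\frac{1}{2\mu+\lambda}\big(F+P(\rho)-P(\tilde{\rho})+\frac12|\mathbf{B}|^2\big)$. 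Hence, along each trajectory, with $z\triangleq\ln y$,
\[
z'(t)+\frac{a}{2\mu+\lambda}\big(e^{\gamma z}-\tilde{\rho}^\gamma\big)=-\frac{1}{2\mu+\lambda}\Big(F+\tfrac12|\mathbf{B}|^2\Big)(\mathbf{X}(t),t).
\]
Nonnegativity of $\rho$ is immediate, since $y(t)=\rho_0(\mathbf{X}(0))\exp(-\int_0^t\divv\mathbf{u}\,\mathrm{d}s)\ge0$.

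The left-hand side contains the genuinely dissipative term $g(z)\triangleq-\frac{a}{2\mu+\lambda}(e^{\gamma z}-\tilde{\rho}^\gamma)$, which supplies strong damping as soon as $z$ exceeds $\ln(\frac32\hat{\rho})$ (recall $\tilde{\rho}\le\hat{\rho}$ by \eqref{c1} and \eqref{a3}): indeed $g(z)\le-N_1$ for $z\ge M_0\triangleq\ln(\frac32\hat{\rho})$, with $N_1=\frac{a}{2\mu+\lambda}\big((\frac32\hat{\rho})^\gamma-\tilde{\rho}^\gamma\big)$. I would then set $b(t)\triangleq-\frac{1}{2\mu+\lambda}\int_0^t(F+\frac12|\mathbf{B}|^2)(\mathbf{X}(s),s)\,\mathrm{d}s$ so that $z'=g(z)+b'$, and invoke a Zlotnik-type comparison: if the increments of $b$ obey $b(t_2)-b(t_1)\le N_1(t_2-t_1)+N_0$ for all $0\le t_1\le t_2\le T$, then $z(t)\le\max\{z(0),M_0\}+N_0=\ln(\frac32\hat{\rho})+N_0$, i.e.\ $\rho\le\frac32\hat{\rho}\,e^{N_0}$. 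The whole lemma thus reduces to bounding the increments of $b$ with a \emph{constant} $N_0$ that can be made as small as we please by enlarging $\lambda$.

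The heart of the proof, and the main obstacle, is to show that
\[
b(t_2)-b(t_1)\le\frac{1}{2\mu+\lambda}\int_{t_1}^{t_2}\Big(\|F\|_{L^\infty}+\tfrac12\|\mathbf{B}\|_{L^\infty}^2\Big)\,\mathrm{d}t
\]
is \emph{sublinear} in $t_2-t_1$ with a coefficient decaying in $\lambda$. Using the Gagliardo--Nirenberg inequality $\|F\|_{L^\infty}\le C\|F\|_{L^2}^{1/3}\|\nabla F\|_{L^4}^{2/3}$ with the elliptic bounds \eqref{E2} and \eqref{E4} of Lemma \ref{E0}, one has $\|F\|_{L^2}\lesssim(2\mu+\lambda)^{1/2}$ (from $(\mu+\lambda)\|\divv\mathbf{u}\|_{L^2}^2\le C$) and $\|\nabla F\|_{L^4}\lesssim\|\rho\dot{\mathbf{u}}\|_{L^4}+\|\mathbf{B}\cdot\nabla\mathbf{B}\|_{L^4}$. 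The crucial uniform-in-$T$ fact is $\int_{\min\{1,T\}}^{T}\|\nabla F\|_{L^4}^2\,\mathrm{d}t\le C(M,C_0,\dots)$, obtained by splitting $\|\dot{\mathbf{u}}\|_{L^4}^2\le C\|\dot{\mathbf{u}}\|_{L^2}\|\nabla\dot{\mathbf{u}}\|_{L^2}$, controlling $\|\dot{\mathbf{u}}\|_{L^2}\le C\|\sqrt{\rho}\dot{\mathbf{u}}\|_{L^2}+CC_0^{1/2}\|\nabla\dot{\mathbf{u}}\|_{L^2}$ and invoking the $T$-independent bounds $\int_0^T\|\sqrt{\rho}\dot{\mathbf{u}}\|_{L^2}^2$ and $\int_0^T\sigma\|\nabla\dot{\mathbf{u}}\|_{L^2}^2$ of Lemmas \ref{l3.2} and \ref{l3.4}; the short-time part on $[0,\min\{1,T\}]$ is treated with the weight $\sigma$ (here $\int_0^{1}\sigma^{-1/2}\,\mathrm{d}t<\infty$ absorbs the singularity at $t=0$) and contributes only to $N_0$. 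A Cauchy--Schwarz/Hölder argument then gives $\frac{1}{2\mu+\lambda}\int_{t_1}^{t_2}\|F\|_{L^\infty}\lesssim(2\mu+\lambda)^{-5/6}(t_2-t_1)^{2/3}$, and similarly $\frac{1}{2\mu+\lambda}\int_{t_1}^{t_2}\|\mathbf{B}\|_{L^\infty}^2\lesssim(2\mu+\lambda)^{-1}(t_2-t_1)^{1/2}$ via $\|\mathbf{B}\|_{L^\infty}^2\le C\|\mathbf{B}\|_{L^2}\|\nabla^2\mathbf{B}\|_{L^2}$ and $\int_0^T\|\nabla^2\mathbf{B}\|_{L^2}^2\le C$ from Lemma \ref{l3.2}. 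Throughout, the a priori hypothesis \eqref{3.1} is used to replace $\|\rho\dot{\mathbf{u}}\|$ by $\|\dot{\mathbf{u}}\|$.

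Finally, Young's inequality converts each sublinear bound $A_\lambda(t_2-t_1)^\theta$ with $\theta<1$ into $N_1(t_2-t_1)+N_0$, where $N_0\lesssim A_\lambda^{1/(1-\theta)}N_1^{-\theta/(1-\theta)}$; since $A_\lambda\to0$ and $N_1\sim(2\mu+\lambda)^{-1}$, one checks $N_0\lesssim(2\mu+\lambda)^{-1/2}\to0$ as $\lambda\to\infty$. Choosing $D\ge5D_2$ in \eqref{lam} so that $N_0\le\ln(7/6)$, the comparison yields $\rho\le\frac32\hat{\rho}\,e^{N_0}\le\frac74\hat{\rho}$ on $\mathbb{R}^2\times[0,T]$. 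The delicate points I expect to fight with are the reconciliation of the $(2\mu+\lambda)^{1/2}$ growth of $\|F\|_{L^2}$ against the $1/(2\mu+\lambda)$ prefactor, and the uniformity of all time integrals in $T$, which rests entirely on the $T$-independence of the estimates in Lemmas \ref{l3.2}--\ref{l3.4}.
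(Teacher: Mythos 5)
Your proposal is correct, and it rests on exactly the quantitative ingredients the paper itself uses — the uniform-in-$T$ bounds of Lemmas \ref{l3.2} and \ref{l3.4}, the elliptic estimates of Lemma \ref{E0}, the interpolation $\|F\|_{L^\infty}\le C\|F\|_{L^2}^{1/3}\|\nabla F\|_{L^4}^{2/3}$ with $\|F\|_{L^2}\lesssim(2\mu+\lambda)^{1/2}$, and the $\sigma$-weight to absorb the singularity at $t=0$ — and your final smallness $N_0\lesssim(2\mu+\lambda)^{-1/2}$ reproduces the paper's rate and hence the same requirement $D\ge 5D_2$. The route, however, differs genuinely in its comparison machinery. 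The paper does not invoke Zlotnik's lemma: it argues by contradiction along trajectories (a minimal time $t_1$ with $\rho=\frac74\hat\rho$ and a maximal earlier time $t_0$ with $\rho=\frac32\hat\rho$) and splits into the cases $t_1\le1$ and $t_1>1$. For $t_1\le1$ it integrates the same $\ln\rho$ equation you wrote, drops the nonnegative damping altogether, and concludes from the $L^1(0,\sigma(T);L^\infty)$ bound \eqref{3.55} on $F$; for $t_1>1$ it switches to the cubic functional $|\rho-\tilde\rho|^3$, extracts a damping coefficient $\theta(t)\rho(t)/(2\mu+\lambda)$ by the mean value theorem, and applies Young's inequality pointwise against this damping, producing $\frac{1}{2\mu+\lambda}\int_1^T\|F\|_{L^\infty}^3\mathrm{d}t\lesssim(2\mu+\lambda)^{-1/2}$. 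Your single application of Zlotnik's lemma to $z=\ln\rho$, with Young's inequality applied at the level of increments of $b$ (giving $N_0\lesssim A_\lambda^3N_1^{-2}\sim(2\mu+\lambda)^{-1/2}$), is the same mathematical move packaged once instead of twice: it buys a unified argument with no case distinction on trajectories and no auxiliary cubic functional, at the cost of importing an external comparison lemma that the paper, staying self-contained, avoids. Two minor remarks: since $|\mathbf{B}|^2\ge0$, the magnetic part of your forcing $b'$ has a favorable sign and could simply be discarded (using the differential-inequality form of the comparison), so estimating it, though harmless, is not needed; and the $H^2$ velocity of the strong solution is log-Lipschitz rather than Lipschitz, which still yields a well-defined flow by Osgood's criterion — the same point the paper leaves implicit.
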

\begin{proof}
Let $\mathbf{y}\in\mathbb{R}^2$ and define the corresponding particle path $\mathbf{x}(t)$ by
\begin{align*}
\begin{cases}
\mathbf{\dot{x}}(t,\mathbf{y})=\mathbf{u}(\mathbf{x}(t,\mathbf{y}),\mathbf{y)},\\
\mathbf{\dot{x}}(t_0,\mathbf{y})=\mathbf{y}.
\end{cases}
\end{align*}
Assume that there exists $t_1\leq T$ satisfying $\rho(\mathbf{x}(t_1), t_1) = \frac{7}{4}\hat{\rho}$, we take a minimal value of $t_1$ and then choose a maximal value of $t_0<t_1$ such that $\rho(\mathbf{x}(t_0), t_0)=\frac{3}{2}\hat{\rho}$. Thus, $\rho(\mathbf{x}(t),t)\in[\frac{3}{2}\hat{\rho},\frac{7}{4}\hat{\rho}]$ for $t\in[t_0,t_1]$. We divide the argument into two cases.

\textbf{Case 1:} $t_0<t_1\leq1$. The definition of $F$ in \eqref{1.6} and $\eqref{a1}_1$ imply that
\begin{equation*}
(2\mu+\lambda)\frac{\mathrm{d}}{\mathrm{d}t}\ln\rho(\mathbf{x}(t),t)
+P(\rho(\mathbf{x}(t),t))-P(\tilde{\rho})+\frac12|\mathbf{B}(\mathbf{x}(t),t)|^2=-F(\mathbf{x}(t),t),
\end{equation*}
where $\frac{\mathrm{d}\rho}{\mathrm{d}t}\triangleq \rho_t+\mathbf{u}\cdot\nabla\rho$. Integrating the above equality from $t_0$ to $t_1$ and abbreviating $\rho(\mathbf{x},t)$ by $\rho(t)$ for
convenience, one gets that
\begin{equation}\label{3.54}
\ln\rho(\tau)\big|_{t_0}^{t_1}+\frac{1}{2\mu+\lambda}\int_{t_0}^{t_1}\Big[P(\rho(\tau))
-P(\tilde{\rho})+\frac12|\mathbf{B}(\mathbf{x}(\tau),\tau)|^2\Big]\mathrm{d}\tau=-\frac{1}{2\mu+\lambda}\int_{t_0}^{t_1}F(\mathbf{x}(\tau),\tau)\mathrm{d}\tau.
\end{equation}
It follows from Gagliardo--Nirenberg inequality,
Lemmas $\ref{E0}$, $\ref{l3.1}\text{--}\ref{l3.4}$, and \eqref{3.27} that
\begin{align}\label{3.55}
&\int_0^{\sigma(T)}\|F(\cdot,t)\|_{L^\infty}\mathrm{d}t\leq C\int_0^{\sigma(T)}\|F\|_{L^2}^{\frac13}\|\nabla F\|_{L^4}^{\frac23}\mathrm{d}t\notag\\
&\leq C\int_0^{\sigma(T)}\Big((2\mu+\lambda)^{\frac13}
\|\divv\mathbf{u}\|_{L^2}^{\frac13}+\|P-P(\tilde{\rho})\|_{L^2}^{\frac13}
+\|\mathbf{B}\|_{L^4}^{\frac23}\Big)
\Big(\|\sqrt{\rho}\dot{\mathbf{u}}\|_{L^4}^{\frac23}
+\|\mathbf{B}\cdot\nabla\mathbf{B}\|_{L^4}^{\frac23}\Big)
\mathrm{d}t\notag\\
&\leq C\sup_{0\leq t\leq T}\Big((2\mu+\lambda)^{\frac13}\|\divv\mathbf{u}\|_{L^2}^{\frac13}
+\|P-P(\tilde{\rho})\|_{L^2}^{\frac13}+\|\mathbf{B}\|_{L^4}^{\frac23}\Big)
\int_{0}^{\sigma(T)}\Big(
\|\dot{\mathbf{u}}\|_{L^{2}}^{\frac{1}{3}}\|\nabla\dot{\mathbf{u}}\|_{L^{2}}^{\frac{1}{3}}
+\|\mathbf{B}\|_{L^\infty}^{\frac23}\|\nabla\mathbf{B}\|_{L^4}^{\frac23}\Big)
\mathrm{d}t\notag\\
&\leq C(2\mu+\lambda)^{\frac{1}{6}}(2+M)^{\frac{1}{6}\exp\{3D_2(1+C_0)^6\}}
\int_{0}^{\sigma(T)}
\Big(\|\sqrt{\rho}\dot{\mathbf{u}}\|_{L^{2}}^{\frac{1}{3}}\|\nabla\dot{\mathbf{u}}
\|_{L^{2}}^{\frac{1}{3}}+C_0^\frac16\|\nabla\dot{\mathbf{u}}\|_{L^{2}}^{\frac{2}{3}}
+C_0^\frac{1}{12}\|\nabla\mathbf{B}\|_{L^2}^{\frac23}\|\nabla^2\mathbf{B}\|_{L^2}^{\frac12}
\Big)\mathrm{d}t\notag\\
&\leq(2\mu+\lambda)^{\frac{1}{6}}(2+M)^{\frac{1}{2}\exp\big\{3D_2(1+C_0)^6\big\}} \Bigg[\bigg(\int_{0}^{\sigma(T)}\|\sqrt{\rho}\dot{\mathbf{u}}\|_{L^{2}}^{2}\mathrm{d}t\bigg)^{\frac{1}{3}}\bigg(\int_{0}^{\sigma(T)}1\mathrm{d}t\bigg)^{\frac{2}{3}}
\notag\\&\quad+\bigg(\int_0^{\sigma(T)}t\|\nabla\dot{\mathbf{u}}\|_{L^2}^2
\mathrm{d}t\bigg)^{\frac13}\bigg(\int_0^{\sigma(T)}
t^{-\frac12}\mathrm{d}t\bigg)^{\frac23}+\sup_{0\leq t\leq T}\|\nabla\mathbf{B}\|_{L^2}^{2}
\bigg(\int_{0}^{\sigma(T)}\|\nabla^2\mathbf{B}\|_{L^{2}}^{2}\mathrm{d}t\bigg)^{\frac{1}{4}}\bigg(\int_{0}^{\sigma(T)}1\mathrm{d}t\bigg)^{\frac{3}{4}}
\Bigg]\notag\\
&\leq(2\mu+\lambda)^{\frac{1}{6}}\exp
\bigg\{(2+M)^{\exp\big\{\frac{17}{4}D_2(1+C_0)^6\big\}}\bigg\}.
\end{align}
Note that $\rho(t)$ takes values in $[\frac{3}{2}\hat{\rho},\frac{7}{4}\hat{\rho}]\subset [\hat{\rho},2\hat{\rho}]$ and $P(\rho)$ is increasing on $[0,\infty)$. Substituting $\eqref{3.55}$ into $\eqref{3.54}$ gives that
\begin{equation*}
    \ln\left(\frac{7}{4}\hat{\rho}\right)-\ln\left(\frac{3}{2}\hat{\rho}\right)
    \leq\frac{1}{\left(2\mu+\lambda\right)^{\frac{5}{6}}}
   \exp
\bigg\{(2+M)^{\exp\big\{\frac{9}{2}D_2(1+C_0)^6\big\}}\bigg\},
\end{equation*}
which is impossible if $\lambda$ satisfies \eqref{lam} with $D\geq 5D_2$. Therefore, there is no time $t_1$ such that $\rho(\mathbf{x}(t_1), t_1) = \frac{7}{4}\hat{\rho}$. Since $\mathbf{y}\in\mathbb{R}^2$ is arbitrary, it follows that $\rho<\frac{7}{4}\hat{\rho}$ \textit{a.e.} on $\mathbb{R}^2\times[0,T]$.

\textbf{Case 2:} $t_1>1$. We deduce from $\eqref{a1}_1$ and \eqref{1.6} that
\begin{equation*}
    \frac{\mathrm{d}}{\mathrm{d}t}(\rho(t)-\tilde{\rho})+\frac{1}{2\mu+\lambda}\rho(t)\Big(P(\rho(t))-P(\tilde{\rho})+\frac12|\mathbf{B}(\mathbf{x}(t),t)|^2\Big)=-\frac{1}{2\mu+\lambda}\rho(t)F(\mathbf{x}(t),t).
\end{equation*}
After multiplying the above equality by $|\rho(t)-\tilde{\rho}|(\rho(t)-\tilde{\rho})$, one has that
\begin{equation}\label{3.56}
    \frac{1}{3}\frac{\mathrm{d}}{\mathrm{d}t}\left|\rho(t)-\tilde{\rho}\right|^3+\frac{1}{2\mu+\lambda}\theta(t)\rho(t)|\rho(t)-\tilde{\rho}|^3
    =-\frac{1}{2(2\mu+\lambda)}\rho(t)(\rho(t)-\tilde{\rho})|\rho(t)-\tilde{\rho}|\big(2F+|\mathbf{B}|^2\big)(\mathbf{x}(t),t),
\end{equation}
where
\begin{equation*}
  \theta(t)\triangleq\frac{P(\rho(t))-P(\tilde{\rho})}{\rho(t)-\tilde{\rho}}.
\end{equation*}
If $\rho(t)$ takes values in $[\frac{3}{2}\hat{\rho},\frac{7}{4}\hat{\rho}]$,
applying the mean value theorem to $\theta(t)$ and then integrating \eqref{3.56} from $t_0$ to $t_1$, by similar arguments we obtain from Gagliardo--Nirenberg and Young's inequalities that
\begin{align*}
\hat{\rho}^{3}
&\leq\frac{C}{2\mu+\lambda}\int_{0}^{1}
\big(\|F(\cdot,t)\|_{L^\infty}+\|\mathbf{B}(\cdot,t)\|_{L^\infty}^2\big)\mathrm{d}t
+\frac{C}{2\mu+\lambda}\int_{1}^{T}
\big(\|F(\cdot,t)\|_{L^\infty}^3+\|\mathbf{B}(\cdot,t)\|_{L^\infty}^6\big)\mathrm{d}t\notag\\
&\leq(2\mu+\lambda)^{-\frac{5}{6}}\exp
\bigg\{(2+M)^{\exp\big\{\frac{17}{4}D_2(1+C_0)^6\big\}}\bigg\}
+\frac{C}{2\mu+\lambda}\int_{1}^{T}\|F\|_{L^{2}}\|\nabla F\|_{L^{4}}^{2}\mathrm{d}t\notag\\
&\leq(2\mu+\lambda)^{-\frac{5}{6}}\exp
\bigg\{(2+M)^{\exp\big\{\frac{9}{2}D_2(1+C_0)^6\big\}}\bigg\}
\notag\\&\quad
+(2\mu+\lambda)^{-\frac{1}{2}}\exp
\bigg\{(2+M)^{\frac{1}{2}\exp\big\{3D_2(1+C_0)^6\big\}}\bigg\}
\int_{0}^{T}
\Big(\|\sqrt{\rho}\dot{\mathbf{u}}\|_{L^{2}}\|\nabla\dot{\mathbf{u}}\|_{L^{2}}
+\|\nabla\mathbf{B}\|_{L^2}^{2}
(1+\|\nabla^2\mathbf{B}\|_{L^2}^2)\Big)\mathrm{d}t\notag\\
&\leq\frac{1}{(2\mu+\lambda)^{\frac12}}
\exp
\bigg\{(2+M)^{\exp\big\{\frac{19}{4}D_2(1+C_0)^6\big\}}\bigg\},
\end{align*}
which is impossible if $\lambda$ satisfies \eqref{lam} with $D\geq 5D_2$. Hence we conclude that there is no time $t_1$ such that $\rho(\mathbf{x}(t_1), t_1) = \frac{7}{4}\hat{\rho}$. Since $\mathbf{y}\in\mathbb{R}^2$ is arbitrary, it follows that $\rho<\frac{7}{4}\hat{\rho}$ \textit{a.e.} on $\mathbb{R}^2\times[0,T]$.
\end{proof}

Now we are ready to prove Proposition $\ref{p3.1}$.
\begin{proof}[Proof of Proposition \ref{p3.1}.]
Proposition \ref{p3.1} follows from Lemmas $\ref{l3.2}$--$\ref{l3.5}$ provided that $\lambda$ satisfies \eqref{lam} with $D\geq 5D_2$.
\end{proof}

\section{Proof of Theorem \ref{t1.1}}\label{sec4}
This section employs the \textit{a priori} estimates derived in Section $\ref{sec3}$ to finalize the proof of Theorem \ref{t1.1}.
\begin{proof}[Proof of Theorem \ref{t1.1}.]
Let $(\rho_0, \mathbf{u}_0, \mathbf{B}_0)$ be the initial data as described in Theorem \ref{t1.1}.
For $\epsilon>0$, let $j_\epsilon=j_\epsilon(\mathbf{x})$ be the standard mollifier,
and define the approximate initial data $(\rho_0^\epsilon, \mathbf{u}_0^\epsilon, \mathbf{B}_0^\epsilon)$:
\begin{align*}
\rho_0^\epsilon=J_\epsilon*\rho_0+\epsilon\triangleq[\rho_0]_\epsilon+\epsilon,~~
\mathbf{u}_0^\epsilon=J_\epsilon*\mathbf{u}_0\triangleq[\mathbf{u}_0]_\epsilon,~~
\mathbf{B}_0^\epsilon=J_\epsilon*\mathbf{B}_0\triangleq[\mathbf{B}_0]_\epsilon.
\end{align*}
Then we have
\begin{align*}
(\rho_0^\epsilon-\tilde{\rho},\mathbf{u}_0^\epsilon,\mathbf{B}_0^\epsilon)\in H^2\ \
\text{and} \ \ \inf_{\mathbf{x}\in\mathbb{R}^2}\{\rho_0^\epsilon(\mathbf{x})\}\geq\epsilon.
\end{align*}
Proposition $\ref{p3.1}$ implies that, for $\epsilon$ being suitably small,
\begin{align*}
0\leq\rho^\epsilon(\mathbf{x},t)\leq\frac{7}{4}\hat{\rho}~\textit{a.e.}~\mathrm{on}~\mathbb{R}^2\times[0,T]
\end{align*}
provided that $\lambda$ satisfies \eqref{lam}. Thus, Lemma $\ref{l2.1}$ yields the global existence and uniqueness of strong solutions $(\rho^\epsilon,\mathbf{u}^\epsilon,\mathbf{B}^\epsilon)$ to \eqref{a1} and \eqref{a3} with the initial data $(\rho_0^\epsilon,\mathbf{u}_0^\epsilon,\mathbf{B}_0^\epsilon)$.

Fix $\mathbf{x}\in\mathbb{R}^2$ and let $B_R$ be a ball of radius $R$ centered at $\mathbf{x}$.
Let $(F^\epsilon,\omega^\epsilon)$ be the functions $(F,\omega)$ with $(\rho,{\bf u},{\bf B})$ replaced by $(\rho^\epsilon,{\bf u}^\epsilon,{\bf B}^\epsilon)$.
Then, for $t\geq\tau>0$, one gets from Lemmas $\ref{E0}$, $\ref{l3.1}\text{--}\ref{l3.4}$, and Sobolev's inequality that
\begin{align}\label{4.1}
\langle\mathbf{u}^\epsilon(\cdot,t)
\rangle^{\frac12}&\leq C(1+\|\nabla\mathbf{u}^\epsilon\|_{L^4})\notag\\
&\leq C\Big(\|\sqrt{\rho^\epsilon}\dot{\mathbf{u}}^\epsilon\|_{L^2}^{\frac{1}{2}}
+\|\mathbf{B}^\epsilon\cdot\nabla\mathbf{B}^\epsilon\|_{L^2}^{\frac{1}{2}}\Big)
\|\nabla\mathbf{u}^\epsilon\|_{L^2}^\frac{1}{2}
+\frac{C}{2\mu+\lambda}\|\mathbf{B}^\epsilon\|_{L^{8}}^2+\frac{C}{2\mu+\lambda}\|P(\rho^\epsilon)-P(\tilde\rho+\epsilon)\|_{L^4}\notag\\
&\quad+
\frac{C}{2\mu+\lambda}\Big(\|\sqrt{\rho^\epsilon}\dot{\mathbf{u}}^\epsilon\|_{L^2}^{\frac{1}{2}}
+\|\mathbf{B}^\epsilon\cdot\nabla\mathbf{B}^\epsilon\|_{L^2}^{\frac{1}{2}}\Big)
\Big(\|P(\rho^\epsilon)-P(\tilde\rho+\epsilon)\|_{L^2}^{\frac{1}{2}}
+\|\mathbf{B}^\epsilon\|_{L^4}\Big)+C
\leq C(\tau).
\end{align}
Note that
\begin{align*}
\bigg|\mathbf{u}^\epsilon(\mathbf{x},t)-\frac{1}{|B_R(\mathbf{x})|}
\int_{B_R(\mathbf{x})}\mathbf{u}^\epsilon(\mathbf{y},t)\mathrm{d}\mathbf{y}\bigg|
&=\bigg|\frac{1}{|B_R(\mathbf{x})|}
\int_{B_R(\mathbf{x})}[\mathbf{u}^\epsilon(\mathbf{x},t)
-\mathbf{u}^\epsilon(\mathbf{y},t)]\mathrm{d}\mathbf{y}\bigg|\notag\\
&\leq\frac{1}{|B_R(\mathbf{x})|}C(\tau)\int_{B_R(\mathbf{x})}
|\mathbf{x}-\mathbf{y}|^{\frac12}\mathrm{d}\mathbf{y}\leq C(\tau)R^{\frac12}.
\end{align*}
Then, for $0<\tau\leq t_1<t_2<\infty$, it follows that
\begin{align}
|\mathbf{u}^\epsilon(\mathbf{x},t_2)-\mathbf{u}^\epsilon(\mathbf{x},t_1)|
&\leq\frac{1}{|B_R(\mathbf{x})|}\int_{t_{1}}^{t_{2}}
\int_{B_R(\mathbf{x})}|\mathbf{u}_{t}^{\epsilon}(\mathbf{y},t)
|\mathrm{d}\mathbf{y}\mathrm{d}t+C(\tau)R^{\frac12}\notag\\
&\leq CR^{-1}|t_{2}-t_{1}|^{\frac{1}{2}}\bigg(\int_{t_{1}}^{t_{2}}
\int_{B_R(\mathbf{x})}\left|\mathbf{u}_{t}^{\epsilon}(\mathbf{y},t)\right|^{2}
\mathrm{d}\mathbf{y}\mathrm{d}t\bigg)^{\frac{1}{2}}+C(\tau)R^{\frac12}\notag\\
&\leq CR^{-1}|t_{2}-t_{1}|^{\frac{1}{2}}\bigg(\int_{t_{1}}^{t_{2}}
\int_{B_R(\mathbf{x})}\left(|\dot{\mathbf{u}}^{\epsilon}|^{2}
+|\mathbf{u}^{\epsilon}|^{2}|\nabla\mathbf{u}^{\epsilon}|^{2}\right)
\mathrm{d}\mathbf{y}\mathrm{d}t\bigg)^{\frac{1}{2}}
+C(\tau)R^{\frac12}\notag\\
&\leq C(\tau)R^{-1}|t_{2}-t_{1}|^{\frac{1}{2}}+C(\tau)R^{\frac12},\notag
\end{align}
due to
\begin{align}
\int_{t_1}^{t_2}\int|\mathbf{u}^\epsilon|^2
|\nabla\mathbf{u}^\epsilon|^2\mathrm{d}\mathbf{x}\mathrm{d}t
&\leq C\sup_{t_1\leq t\leq t_2}\|\mathbf{u}^\epsilon\|_{L^\infty}^2\int_{t_1}^{t_2}
\int|\nabla\mathbf{u}^\epsilon|^2\mathrm{d}\mathbf{x}\mathrm{d}t\notag\\
&\leq C\sup_{t_1\leq t\leq t_2}\|\mathbf{u}^\epsilon\|_{L^2}^{\frac{2}{3}}
\|\nabla\mathbf{u}^\epsilon\|_{L^4}^{\frac{4}{3}}
\int_{t_1}^{t_2}\int|\nabla\mathbf{u}^\epsilon|^2\mathrm{d}\mathbf{x}\mathrm{d}t\leq C(\tau).\notag
\end{align}
Choosing $R=|t_2-t_1|^{\frac13}$, one can see that
\begin{equation}\label{4.2}
|\mathbf{u}^\epsilon(\mathbf{x},t_2)-\mathbf{u}^\epsilon(\mathbf{x},t_1)|
\leq C(\tau)|t_{2}-t_{1}|^{\frac{1}{6}},~~\text{for}~~0<\tau\leq t_1<t_2<\infty.
\end{equation}
The same estimates in \eqref{4.1} and \eqref{4.2} also hold for the magnetic filed $\mathbf{B}^\epsilon$, which implies that $\{\mathbf{u}^\epsilon\}$ and $\{\mathbf{B}^\epsilon\}$ are uniformly H\"older continuous away from $t=0$.

Hence, for any fixed $\tau$ and $T$ with $0<\tau<T<\infty$, it follows from Ascoli--Arzel\`{a} theorem that there is a subsequence $\epsilon_k\rightarrow0$ satisfying
\begin{equation*}
  \mathbf{u}^{\epsilon_k}\rightarrow \mathbf{u}~~\mathrm{and}~~
\mathbf{B}^{\epsilon_k}\rightarrow \mathbf{B}
~~\mathrm{uniformly}~\mathrm{in}~ B_T\times[\tau,T].
\end{equation*}
By the diagonalization argument, we obtain a subsequence $\{\epsilon_{k'}\}$ such that
\begin{equation}\label{4.3}
\mathbf{u}^{\epsilon_{k'}}\rightarrow \mathbf{u}~~\mathrm{and}~~
\mathbf{B}^{\epsilon_{k'}}\rightarrow \mathbf{B}
~~\mathrm{uniformly}~\mathrm{on}~\mathrm{compact}~\mathrm{sets} ~\mathrm{in}~\mathbb{R}^2\times(0,\infty), ~~\text{as}~~\epsilon_{k'}\rightarrow0.
\end{equation}
Moreover, by the standard compactness arguments as in \cite{EF01,PL98}, we can extract a further subsequence $\epsilon_{k''}\rightarrow0$ satisfying
\begin{equation}\label{4.4}
  \rho^{\epsilon_{k''}}-\tilde{\rho}\rightarrow\rho-\tilde{\rho}~~\mathrm{strongly}~\mathrm{in}~L^p(\mathbb{R}^2),~~\mathrm{for}~
  \mathrm{any}~p\in[2,\infty)~\mathrm{and}~t\geq0.
\end{equation}
Therefore, taking the limit of $\epsilon_{k''}\rightarrow0$ in \eqref{4.3} and \eqref{4.4}, we conclude that the limit triplet $(\rho,\mathbf{u},\mathbf{B})$ is a weak solution to the problem \eqref{a1}--\eqref{a3} in the sense of Definition $\ref{d1.1}$ and satisfies \eqref{reg}.
\end{proof}

\section{Proof of Theorem \ref{t1.2}}\label{sec5}
This section is devoted to the incompressible limit of \eqref{a1}--\eqref{a3} as the bulk viscosity tends to infinity.

\begin{proof}[Proof of Theorem \ref{t1.2}.]
Let $\{(\rho^\lambda,{\bf u}^\lambda, {\bf B}^\lambda)\}$ be the family of solutions to the Cauchy problem \eqref{a1}--\eqref{a3} from Theorem \ref{t1.1}. Applying \eqref{reg} and performing a similar argument as in \eqref{4.3}, we can see that there is a subsequence $\{(\rho^{\lambda_{k}},{\bf u}^{\lambda_{k}},{\bf B}^{\lambda_{k}})\}$ satisfying
\begin{gather}
{\bf u}^{\lambda_{k}}\rightarrow {\bf v},~~{\bf B}^{\lambda_{k}}\rightarrow {\bf b}
~~\text{uniformly on compact sets  in}~\mathbb R^2\times(0,\infty),\notag\\
\rho^{\lambda_{k}}-\tilde\rho\rightarrow \varrho-\tilde\rho\ \    \text{weakly in}\ \ L^p(\mathbb R^2),\ \ \text{for any}\ p\in[2,\infty)\ \text{and}\ {t\ge 0},\label{5.1}\\
\rho^{\lambda_{k}}\rightarrow \varrho\ \  \text{weakly* in}\ \ L^\infty(\mathbb R^2),\ \ \text{for any}\ {t\ge 0},\notag\\
\divv{\bf u}^{\lambda_{k}}\rightarrow 0~~\text{strongly  in}~L^2(\mathbb R^2\times(0,\infty)).\notag
\end{gather}
Hence, we conclude that $\divv{\bf v}=0$ and $(\varrho,{\bf v},{\bf b})$ satisfies \eqref{1.16}--\eqref{1.18} for all $C^1$
test functions $(\phi,\boldsymbol\psi)$ with uniformly bounded support in ${\bf x}$ for $t\in[t_1,t_2]$ and $\divv\boldsymbol\psi=0$ on $\mathbb R^2\times[0,\infty)$. Moreover, $(\varrho,{\bf v},{\bf b})$ has the following properties:
\begin{align}\label{5.2}
 0\leq\varrho({\bf x},t)\leq 2 \hat\rho\ \ \text{a.e. on} \
\mathbb R^2\times[0,\infty),
\end{align}
\begin{align}\label{5.3}
&\sup\limits_{t\ge 0}\big(\|\varrho-\tilde\rho\|_{L^2}^2+\|\sqrt{\varrho}{\bf v}\|_{L^2}^2+\|\nabla{\bf v}\|_{L^2}^2+\sigma\|\nabla^2{\bf v}\|_{L^2}^2
+\|\mathbf{b}\|_{H^1}^2+\sigma\|\nabla^2 \mathbf{b}\|_{L^2}^2\big)\notag\\
&\quad+\int_0^\infty\big(\mu\|\nabla{\bf v}\|_{L^2}^2+\|\nabla^2{\bf v}\|_{L^2}^2+\nu\|\nabla{\bf b}\|_{L^2}^2+\|\nabla^2 \mathbf{b}\|_{L^2}^2
\big)\mathrm{d}\tau\le C(C_0,M).
\end{align}

It remains to show \eqref{1.13}, \eqref{1.14}, and \eqref{1.15}. From the mass equation $\eqref{a1}_1$, we have that
\begin{equation*}
\partial_t(\rho^{\epsilon,\lambda}-\tilde\rho)^2+{\bf u}^{\epsilon,\lambda}\cdot\nabla(\rho^{\epsilon,\lambda}-\tilde\rho)^2
+2\rho^{\epsilon,\lambda}(\rho^{\epsilon,\lambda}-\tilde\rho)\divv{\bf u}^{\epsilon,\lambda}=0.
\end{equation*}
Integrating the above equality over $\mathbb R^2\times(0,t)$, we obtain that
\begin{align*}
&\big|\|(\rho^{\epsilon,\lambda}-\tilde\rho)(\cdot,t)\|_{L^2}^2
-\|\rho_0^\epsilon-\tilde\rho\|_{L^2}^2\big|
\\ & =\left|-\int_0^t\int(\rho^{\epsilon,\lambda}-\tilde\rho)^2\divv{\bf u}^{\epsilon,\lambda}\mathrm{d}{\bf x}\mathrm{d}\tau+2\int_0^t\int\rho^{\epsilon,\lambda}(\rho^{\epsilon,\lambda}
-\tilde\rho)\divv{\bf u}^{\epsilon,\lambda}\mathrm{d}{\bf x}\mathrm{d}\tau\right|
\\ & \le C\bigg(\int_0^t\big\|\rho^{\epsilon,\lambda}-\tilde\rho\big\|_{L^4}^4
\mathrm{d}\tau\bigg)^\frac{1}{2}\bigg(\int_0^t\big\|\divv{\bf u}^{\epsilon,\lambda}\big\|_{L^2}^2\mathrm{d}\tau\bigg)^\frac{1}{2}
\\ & \quad +C\sup\limits_{t\ge 0}\big\|\rho^{\epsilon,\lambda}(\cdot,t)\big\|_{L^\infty} \bigg(\int_0^t\big\|\rho^{\epsilon,\lambda}-\tilde\rho\big\|_{L^2}^2
\mathrm{d}\tau\bigg)^\frac{1}{2}\bigg(\int_0^t\big\|\divv{\bf u}^{\epsilon,\lambda}\big\|_{L^2}^2\mathrm{d}\tau\bigg)^\frac{1}{2}
 \le C(t)\lambda^{-\frac{1}{2}}.
\end{align*}
This together with \eqref{4.4} gives that
\begin{align*}
\big|\|(\rho^{\lambda}-\tilde\rho)(\cdot,t)\|_{L^2}^2
-\|\rho_0-\tilde\rho\|_{L^2}^2\big|
=\lim\limits_{\epsilon_{k''}\rightarrow 0}
\big|\|(\rho^{\epsilon_{k''},\lambda}-\tilde\rho)(\cdot,t)\|_{L^2}^2
-\|\rho_0^{\epsilon_{k''}}-\tilde\rho\|_{L^2}^2\big|
\le C(t)\lambda^{-\frac{1}{2}},
\end{align*}
which yields that
\begin{equation}\label{5.4}
\lim\limits_{\lambda\rightarrow\infty}\|(\rho^\lambda-\tilde\rho)(\cdot,t)\|_{L^2}
=\|\rho_0-\tilde\rho\|_{L^2},\ \ \text{for any}\ t\ge 0.
\end{equation}
Performing similar arguments as those in \eqref{3.11}--\eqref{3.13}, one gets that
\begin{align*}
\|\mathbf{v}\|_{H^1} \leq C(1+C_0)^\frac{1}{2}(1+\|\nabla\mathbf{v}\|_{L^2}),
\end{align*}
which along with \eqref{5.3} implies that
\begin{align*}
\int_0^T\int|{\bf v}|^2\mathrm{d}{\bf x}\mathrm{d}t&\le C\int_0^T(1+C_0)\big(1+\|\nabla\mathbf{v}\|_{L^2}^2\big)\mathrm{d}t\le C(T),
\end{align*}
as the desired \eqref{1.15}.

Next, in view of \eqref{1.15} and \eqref{5.3}, using the mollifier $j_\epsilon$ as test functions in \eqref{1.16}, we see that
\begin{equation}\nonumber\partial_t[\varrho]_\epsilon+{\bf v}\cdot\nabla [\varrho]_\epsilon=\divv\left([\varrho]_\epsilon {\bf v}\right)-\divv[\rho{\bf v}]_\epsilon~~ \text{a.e. on}\ \mathbb R^2\times(0,\infty),
\end{equation}
and furthermore,
\begin{equation*}
\partial_t\left([\varrho]_\epsilon-\tilde\rho\right)^2
+{\bf v}\cdot\nabla \left([\varrho]_\epsilon-\tilde\rho\right)^2=2\left([\varrho]_\epsilon-\tilde\rho\right)\big(\divv\left([\varrho-\tilde\rho]_\epsilon {\bf v}\right)-\divv\left[(\varrho-\tilde\rho){\bf v}\right]_\epsilon\big)~~ \text{a.e. on}~  \mathbb R^2\times(0,\infty).
\end{equation*}
Integrating the above equality over $\mathbb R^2\times(0,t)$, we have
\begin{align}\label{5.5}
\big|\|([\varrho]_{\epsilon}-\tilde\rho)(\cdot,t)\|_{L^2}^2- \|[\rho_0]_{\epsilon}-\tilde\rho\|_{L^2}^2\big|
 & \le C\sup\limits_{t\ge 0}\|[\varrho]_\epsilon-\tilde\rho\|_{L^\infty}\int_0^t
 \|\divv\left([\varrho-\tilde\rho]_\epsilon {\bf v}\right)-\divv\left[(\varrho-\tilde\rho){\bf v}\right]_\epsilon\|_{L^1}\mathrm{d}\tau
\notag \\
& \le C\int_0^t\|\divv\left([\varrho-\tilde\rho]_\epsilon {\bf v}\right)-\divv\left[(\varrho-\tilde\rho){\bf v}\right]_\epsilon\|_{L^1}\mathrm{d}\tau.
\end{align}
By Lemma \ref{lcom}, it follows that
\begin{equation*}
\|\divv\left([\varrho-\tilde\rho]_\epsilon {\bf v}\right)-\divv\left[(\varrho-\tilde\rho){\bf v}\right]_\epsilon\|_{L^1}\le C\|\varrho-\tilde\rho\|_{L^2}\|{\bf v}\|_{H^1}\in L^1(0,T),\ \ \text{for any}\ T>0.
\end{equation*}
This together with Lebesgue's dominated convergence theorem and Lemma \ref{lcom} leads to
\begin{align}\label{5.6}
&\lim\limits_{\epsilon\rightarrow0}\int_0^t
\|\divv\left([\varrho-\tilde\rho]_\epsilon {\bf v}\right)-\divv\left[(\varrho-\tilde\rho){\bf v}\right]_\epsilon\|_{L^1}\mathrm{d}\tau \notag \\
& =\int_0^t\lim\limits_{\epsilon\rightarrow0}\|\divv\left([\varrho-\tilde\rho]_\epsilon {\bf v}\right)-\divv\left[(\varrho-\tilde\rho){\bf v}\right]_\epsilon\|_{L^1}\mathrm{d}\tau=0.
\end{align}
Substituting \eqref{5.6} into \eqref{5.5}, we deduce that
\begin{align}\label{5.7}
\big|\|(\varrho-\tilde\rho)(\cdot,t)\|_{L^2}^2-\|\rho_0-\tilde\rho\|_{L^2}^2\big|= \lim\limits_{\epsilon\rightarrow0} \big|\| ([\varrho]_{\epsilon}-\tilde\rho)(\cdot,t)\|_{L^2}^2- \|[\rho_0]_{\epsilon}-\tilde\rho\|_{L^2}^2\big|=0,
\end{align}
which yields \eqref{1.14}.

Finally, combining \eqref{5.4} and \eqref{5.7}, one has that
\begin{align*}
\lim\limits_{\lambda\rightarrow\infty}\|(\rho^\lambda-\tilde\rho)(\cdot,t)\|_{L^2}
=\|(\varrho-\tilde\rho)(\cdot,t)\|_{L^2},\ \ \text{for any}\ t\ge 0,
\end{align*}
which along with \eqref{5.1} implies \eqref{1.13}. Consequently, $(\varrho,{\bf v},{\bf b})$ is a global weak solution to the inhomogeneous incompressible magnetohydrodynamic equations \eqref{1.5} in the sense of Definition \ref{d1.2}.
\end{proof}

\section*{Conflict of interests}
The authors declare that they have no conflict of interests.

\section*{Data availability}
No data was used for the research described in the article.

\end{document}